\documentclass[11pt]{amsart}

\usepackage[T1]{fontenc}
\usepackage[utf8]{inputenc}
\usepackage{lmodern}
\usepackage{stmaryrd}

\usepackage[american]{babel}
\usepackage[babel, final]{microtype}
\usepackage[all]{xy}
\usepackage{ifpdf}
\usepackage{comment}
\usepackage{multirow}

\usepackage[pdftex, dvipsnames]{xcolor}
\usepackage[pdftex, final]{graphicx}
\usepackage{pinlabel}
\usepackage[pdftex,%
    includehead,%
    includefoot,%
    nomarginpar,%
    lmargin=1in,%
    rmargin=1in,%
    tmargin=1in,%
    bmargin=1in,%
]{geometry}
\usepackage{amsmath,amsthm,amssymb,amsfonts,mathrsfs,tikz,tikz-cd,bm,verbatim,mathtools,hyperref,caption,enumitem,csquotes,float}

\hypersetup{
    colorlinks=true,
    linkcolor=NavyBlue,
    citecolor=NavyBlue
}


\usepackage[hang,flushmargin]{footmisc}

\usepackage[backend=biber,style=alphabetic,sorting=nyt, url=false,doi=false,isbn=false,backref=true,maxnames=10]{biblatex}
\DefineBibliographyStrings{english}{
  backrefpage={$\uparrow$},
  backrefpages={$\uparrow$}
}

\addbibresource{references.bib}

\setlist[itemize]{noitemsep} 

\addto\extrasamerican{%
}

\newtheorem{theorem}{Theorem}[section]
\newtheorem{corollary}{Corollary}[section]
\newtheorem{proposition}{Proposition}[section]
\newtheorem{lemma}{Lemma}[section]
\theoremstyle{definition}
\newtheorem{definition}{Definition}[section]
\newtheorem{example}{Example}[section]
\newtheorem{remark}{Remark}[section]
\newtheorem{question}{Question}[section]

\makeatletter
\let\c@conjecture=\c@theorem
\let\c@corollary=\c@theorem
\let\c@proposition=\c@theorem
\let\c@lemma=\c@theorem
\let\c@definition=\c@theorem
\let\c@example=\c@theorem
\let\c@remark=\c@theorem
\let\c@equation\c@theorem
\let\c@question\c@theorem
\makeatother

\def\makeautorefname#1#2{\expandafter\def\csname#1autorefname\endcsname{#2}}
\makeautorefname{proposition}{Proposition}
\makeautorefname{lemma}{Lemma}
\makeautorefname{definition}{Definition}
\makeautorefname{example}{Example}
\makeautorefname{question}{Question}
\makeautorefname{conjecture}{Conjecture}
\makeautorefname{section}{Section}
\makeautorefname{claim}{Claim}
\makeautorefname{equation}{Equation}
\makeautorefname{remark}{Remark}
\makeautorefname{corollary}{Corollary}

\newcommand{\ZZ}{\mathbb{Z}}

\newcommand{\RR}{\mathbb{R}}

\newcommand{\RP}[1]{\mathbb{RP}^{#1}}
\newcommand{\CP}[1]{\mathbb{CP}^{#1}}

\newcommand{\cU}{\mathcal{U}}

\DeclareMathOperator{\interior}{int}

\DeclareMathOperator{\Ker}{Ker}
\DeclareMathOperator{\tb}{tb}


\title{Constructing Lagrangians from triple grid diagrams}


\author{Sarah Blackwell}
\address{Department of Mathematics, University of Virginia, Charlottesville, VA 22904}
\email{\href{mailto:blackwell@virginia.edu}{blackwell@virginia.edu}}
\urladdr{\url{https://seblackwell.com/}}

\author{David Gay}
\address{Department of Mathematics, University of Georgia, Athens, GA 30602}
\email{\href{mailto:dgay@uga.edu}{dgay@uga.edu}}

\author{Peter Lambert-Cole}
\address{Department of Mathematics, University of Georgia, Athens, GA 30602}
\email{\href{mailto:plc@uga.edu}{plc@uga.edu}}

\keywords{Lagrangian, Legendrian, grid diagram, trisection, filling, cap, symplectic, contact}
\def\subjclassname{\textup{2020} Mathematics Subject Classification}
\expandafter\let\csname subjclassname@1991\endcsname=\subjclassname
\expandafter\let\csname subjclassname@2000\endcsname=\subjclassname
\subjclass{
    57K43, 57K33 
    \hfill
    Date: \today
}

\begin{document}

\begin{abstract}
Links in $S^3$ can be encoded by grid diagrams; a grid diagram is a collection of points on a toroidal grid such that each row and column of the grid contains exactly two points.  Grid diagrams can be reinterpreted as front projections of Legendrian links in the standard contact $3$--sphere. In this paper, we define and investigate triple grid diagrams, a generalization to toroidal diagrams consisting of horizontal, vertical, and diagonal grid lines.   In certain cases, a triple grid diagram determines a closed Lagrangian surface in $\CP{2}$.  Specifically, each triple grid diagram determines three grid diagrams (row-column, column-diagonal and diagonal-row) and thus three Legendrian links, which we think of collectively as a Legendrian link in a disjoint union of three standard contact $3$--spheres. We show that a triple grid diagram naturally determines a Lagrangian cap in the complement of three Darboux balls in $\CP{2}$, whose negative boundary is precisely this Legendrian link.  When these Legendrians are maximal Legendrian unlinks, the Lagrangian cap can be filled by Lagrangian slice disks to obtain a closed Lagrangian surface in $\CP{2}$.  We construct families of examples of triple grid diagrams and discuss potential applications to obstructing Lagrangian fillings.
\end{abstract}

\maketitle

\section{Introduction and main statements} \label{sec:introduction}

This paper concerns combinatorial descriptions of Lagrangian surfaces in $\CP{2}$, analogous to grid diagrams as combinatorial descriptions of Legendrian links in $S^3$. The connection between the two settings is tied to the fact that $S^3$ has a genus one Heegaard splitting in which the $\alpha$ curve is ``vertical'' and the $\beta$ curve is ``horizontal,'' while $\CP{2}$ has a genus one trisection \cite{GK} in which the $\alpha$ curve is ``vertical,'' the $\beta$ curve is ``horizontal,'' and the $\gamma$ curve is ``diagonal'' with slope $-1$ (see \autoref{fig:tori}). Taking multiple parallel copies of these curves gives a grid on $T^2$ on which we can place marked points which encode these Legendrians or Lagrangians.

\begin{figure}[h]
    \begin{center}
    \includegraphics[width=8 cm]{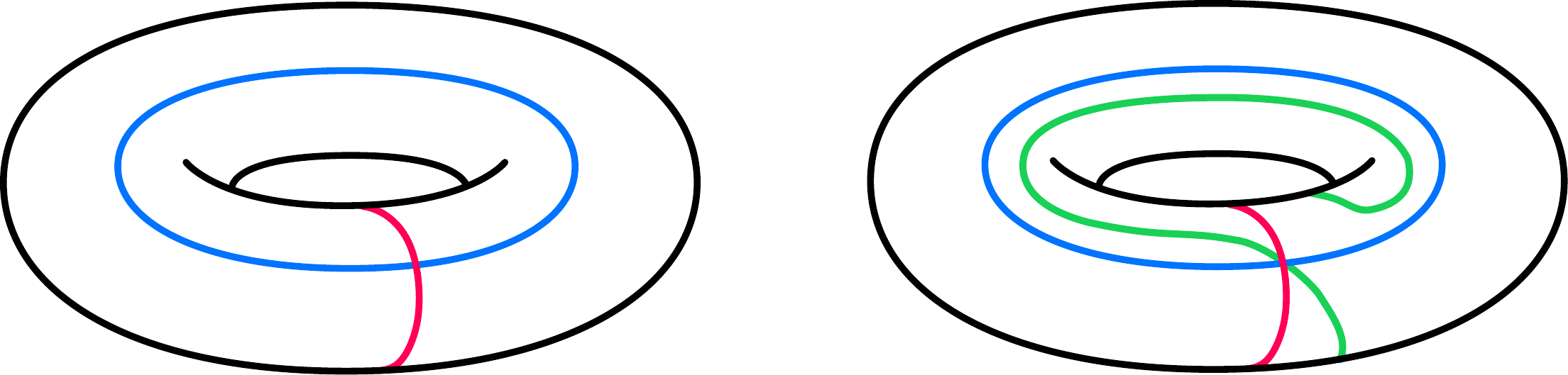}
    \end{center}
    \caption{A genus one Heegaard splitting of $S^3$ (left) and a genus one trisection of $\CP{2}$ (right).
    \label{fig:tori}}
\end{figure}

We define two slightly (but crucially) different versions of triple grid diagrams arising from this setup, which are each useful in various contexts.  Grid diagrams for links in $S^3$ are generally considered to be discrete, combinatorial objects.  It is therefore useful to define a combinatorial object mimicking the application of grid diagrams to the study of Legendrian links.  However Lagrangian surfaces are geometric objects that may live in moduli spaces and important geometric properties -- the symplectic action, monotonicity, and holomorphic disk counts -- are not invariant under small perturbations.

The following definition first appears in the first author's PhD thesis \cite{Blackwell}.
\begin{definition}
 A {\em combinatorial triple grid diagram} of grid number $n$ and size $b$ consists of:
 \begin{enumerate}
     \item a grid on the torus $T^2 = \RR^2/\ZZ^2$ consisting of three sets of lines: 
     \begin{enumerate}
         \item $n$ vertical lines $\left\{ x = \frac{k}{n} : 1 \leq k \leq n \right\}$, colored red by convention,
         \item $n$ horizontal lines $\left\{ y = \frac{k}{n} : 1 \leq k \leq n\right\}$, colored blue by convention, and
         \item $n$ diagonal lines (of slope $-1$) $\left\{x + y = \frac{k}{n} : 1 \leq k \leq n \right\}$, colored green by convention.
     \end{enumerate}
     \item $2b$ points in the complement of the $3n$ grid lines, such that in the region between any pair of adjacent lines of the same slope, there are exactly zero or two points.
 \end{enumerate}
 \end{definition}
 
See \autoref{F:BasicTGDExamples}, and further, \autoref{subsec:examples}, for several examples. Each red-blue square is divided by a green diagonal into two triangles. By convention we only place dots in the lower left triangles. We often draw these diagrams as squares where opposite edges are identified; hence the diagonals wrap around, so that there are exactly $n$ diagonals.
 
\begin{definition} A {\em geometric triple grid diagram} is a collection $\Theta$ of $2b$ points on the torus $T^2 = \RR^2/\ZZ^2$ such that for any vertical line $\{x = c\}$, any horizontal line $\{y = c\}$, or any diagonal line $\{x + y = c\}$, exactly zero or two  points of $\Theta$ lie on the line.
\end{definition}

A combinatorial triple grid diagram immediately determines a geometric triple grid diagram, by forgetting the grid lines.  Conversely, after possibly a small perturbation, it is possible to draw grid lines disjoint from a geometric triple grid diagram if the grid size $n$ is sufficiently large. For the purposes of the results that follow, either form of triple grid diagram suffices to make the statements correct, but the actual constructions start from the explicit data of the $2b$ points of a geometric triple grid diagram.

Following the work of Meier and Zupan \cite{MZ1, MZ2} on bridge trisections, and the further developments in \cite{HKM,GM}, note that a combinatorial triple grid diagram can be thought of as a special kind of multi-pointed Heegaard triple describing a bridge trisected surface, and when the points of a geometric triple grid diagram are connected horizontally, vertically and diagonally, the result can be thought of as a shadow diagram for such a surface (provided, in both cases, that all three links described by the triple grid diagram are unlinks).  
 Furthermore, symplectic surfaces in $\CP{2}$ were characterized in terms of {\em transverse bridge position} by the third author in \cite{LC-Symplectic}.

\begin{figure}
    \centering
    \includegraphics[width=10 cm]{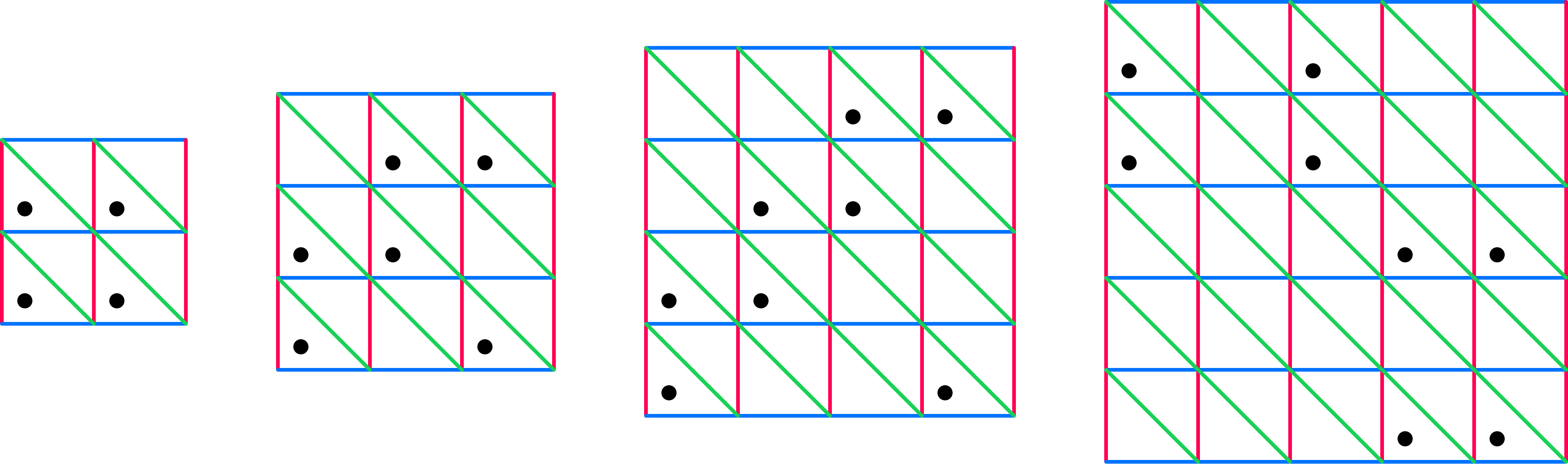}
    \caption{Some examples of combinatorial triple grid diagrams.
    \label{F:BasicTGDExamples}}
\end{figure}

We first provide executive summaries of the main results and then give more explicit details after. As we will see below, a triple grid diagram determines three Legendrian links in three copies of the standard contact $S^3$, which we identify as the boundaries of three disjoint standard balls in $\CP{2}$. 

\begin{theorem}[Executive summary]
 A triple grid diagram $D$ determines a properly embedded Lagrangian surface $L(D)$ in the complement of these three balls which is a {\em Lagrangian cap} for the disjoint union of the three associated Legendrians, in the sense of intersecting each $S^3$ in the given Legendrian and being tangent to inward pointing Liouville vector fields near each $S^3$.
\end{theorem}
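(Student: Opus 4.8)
The plan is to realize $L(D)$ explicitly inside the standard toric picture of $\CP{2}$ (which is the genus-one trisection recalled in the introduction). Write $\mu\colon\CP{2}\to\Delta$ for the moment map of the Fubini--Study form, with $\Delta$ a triangle with edges $e_1,e_2,e_3$ and vertices $v_{ij}=e_i\cap e_j$; over the interior there is a symplectomorphism $\mu^{-1}(\Delta^\circ)\cong T^2\times\Delta^\circ$ with form $\sum_k dq_k\wedge dp_k$, over each $e_i$ the corresponding circle of $T^2$ collapses to give a line $\CP{1}_{e_i}$, and the $v_{ij}$ are the $T^2$-fixed points. First I would fix the identification of the central torus $\Sigma=\mu^{-1}(\text{barycenter})$ with $\RR^2/\ZZ^2$ so that over $e_1,e_2,e_3$ the collapsing circles are parallel to the vertical, horizontal, and diagonal grid lines of $D$, and take the three Darboux balls $B_{12},B_{13},B_{23}$ to be the $\mu$-preimages of small triangular neighborhoods of $v_{12},v_{13},v_{23}$. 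Near a fixed point the toric Liouville field is the radial field of a Darboux chart, so each $\partial B_{ij}\cong S^3$ is a standard contact sphere; with this identification the three grid diagrams (row-column, column-diagonal, diagonal-row) of $D$ are precisely those carried by $\partial B_{12},\partial B_{23},\partial B_{13}$.

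Next I would build the surface from ``radial disks.'' For $p\in\Theta$ let $R_p$ be the closure in $\CP{2}$ of the fibre $\{p\}\times\Delta^\circ$. Inspecting the local models shows $R_p$ is an embedded Lagrangian disk: it is Lagrangian over $\Delta^\circ$ since it is a fibre of the second projection, it limits onto $\CP{1}_{e_i}$ near $e_i^\circ$ along the ray in the collapsing $\CC$-direction cut out by the relevant angular coordinate of $p$, and it limits onto $v_{ij}$ at the corners. Removing $B_{12},B_{13},B_{23}$ turns $R_p$ into a Lagrangian hexagon (a disk with six boundary arcs) having three ``new'' arcs, one embedded in each $S^3$, and three ``old'' arcs, one on each $\CP{1}_{e_i}$. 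The crucial point is that the old arc of $R_p$ on $\CP{1}_{e_i}$ depends only on which occupied family-$i$ grid line contains $p$, because the family-$i$ circle is collapsed over $e_i$; hence the two points lying on that line (exactly two, by the definition of a triple grid diagram) yield radial disks sharing that old arc. I would then define $L(D)=\bigl(\bigcup_{p\in\Theta}R_p\bigr)\cap\bigl(\CP{2}\setminus(B_{12}\cup B_{13}\cup B_{23})\bigr)$; since every old arc is shared by exactly two of the hexagons, all old arcs become interior to $L(D)$, so $L(D)$ is a compact surface with boundary in $\partial B_{12}\cup\partial B_{13}\cup\partial B_{23}$.

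The remaining work is to make $L(D)$ embedded and smooth and to identify its boundary. Two radial disks can meet inside $\Sigma$ when the grid-line arcs joining their points cross, and along $\CP{1}_{e_i}$ the two hexagons sharing an old arc approach it from distinct rays of the collapsing $\CC$-direction, so $\bigcup_p R_p$ has corners along the $\CP{1}_{e_i}$. I would arrange embeddedness by the usual device of realizing the $\Sigma$-crossings with small heights transverse to $\Sigma$ --- here the ``lower-left triangle'' normalization and the $0$-or-$2$ condition dictate which sheets meet --- and smooth the corners along each $\CP{1}_{e_i}$ using that this smoothing occurs in a single $\CC$-direction, where every smooth arc is automatically isotropic, so Lagrangianity survives. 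For the boundary, in Darboux coordinates near $v_{ij}$ the new arcs of the $R_p$ are exactly the rectilinear segments joining the marked points in the two directions of the $(i,j)$-subgrid --- the classical rectilinear front of that grid diagram --- and smoothing them is the usual grid-front smoothing, so $\partial L(D)\cap\partial B_{ij}$ is the associated Legendrian. Finally, each $R_p$ is invariant under the radial toric Liouville flow emanating from $v_{ij}$, so near each $S^3$ the surface $L(D)$ is a Liouville-trivial collar on this Legendrian: it is tangent to the inward Liouville field, and $L(D)$ is a Lagrangian cap as claimed.

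I expect the main obstacle to be the embeddedness argument together with the bookkeeping, carried out uniformly over all three pairs of families at once, that matches the toric contact structure on each $\partial B_{ij}$ with the standard contact $S^3$ and the toric-coordinate arcs with the usual grid-diagram front --- in effect re-deriving the ``grid diagram $=$ Legendrian front'' dictionary three times simultaneously while keeping control of the Lagrangian in the interior. This is also precisely the place where the explicit geometric data of the $2b$ points, rather than a purely combinatorial picture, is needed: those coordinates are what pin down the radial disks and hence the whole surface.
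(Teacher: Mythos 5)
Your route is genuinely different from the paper's (which builds the surface from ``flat'' arcs in the three solid tori $H_\alpha,H_\beta,H_\gamma$ subject to an enclosed-area condition, takes their Legendrian lifts, and assembles the cap piecewise over a subdivision of the moment triangle using the symplectic fields $S_{ij}$, the Liouville fields $V_i$, and an explicit interpolating function; the resulting Lagrangian never touches the toric divisors). But your version has a genuine gap at its crux. The union $\bigcup_p R_p$ has corners along arcs lying \emph{on} the divisors $\CP{1}_{e_i}$, and those corner arcs run all the way into the boundary spheres: at a point of $\partial B_{ij}$ on a Hopf circle, two quadrant-link arcs meet at an angle inside the contact plane. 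You propose to smooth these corners by a product-type rounding in the collapsing $\CC$-direction (``every smooth arc in $\CC$ is isotropic''), and separately you claim Liouville tangency near each $S^3$ because each $R_p$ is a cone on $V_{ij}$. These two claims are in tension: the product rounding destroys exactly the cone property in a neighborhood of the corner locus, which reaches the spheres, so the smoothed surface is no longer tangent to $V_i$ near $\partial_i X$ and its boundary is not obviously Legendrian there. The fix requires first rounding the boundary corners \emph{as Legendrians} in each $S^3$, taking the $V_{ij}$-cone over that rounding near each sphere, and then producing a Lagrangian interpolation along the divisor between the two conical roundings at its two ends and whatever you use in the middle. That interpolation is not free: a family of rounding profiles varying along the divisor, crossed with fixed angle/action data, is not Lagrangian unless the remaining coordinates are varied compatibly. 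This is precisely the hard step the paper engineers with the area condition on the flat arcs, their Legendrian lifts $\tilde A_i$, and the explicit parameterization over the rectangles $R_{\bullet,i}$ with the cutoff $f$ (tangent to $S_{ij}$ on one side, to $V_i$ on the other); your sketch neither supplies it nor identifies it as the obstacle.

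Two smaller points. First, the embeddedness worry you flag is misplaced: distinct radial disks are disjoint over the interior of $\Delta$ (they are distinct torus fibers over each interior point) and meet only along the shared divisor arcs dictated by the zero-or-two condition, so no ``height'' perturbation over $\Sigma$ is needed -- crossings of grid arcs on the central torus play no role in your construction. Second, the identification of the rounded boundary link in $\partial B_{ij}$ with the standard Legendrianization $\Lambda_{\alpha\beta}(D)$ (resp.\ $\Lambda_{\beta\gamma},\Lambda_{\gamma\alpha}$) is asserted rather than proved; the arcs you get are links of Lagrangian quadrants crossing between the two solid tori, not the toroidal-front arcs, and the corner roundings must be performed so as not to introduce stabilizations. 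The paper's analogue (\autoref{P:JoiningLegendrianTangles}) is an explicit front-diagram argument with Legendrian Reidemeister moves; you would need a corresponding argument for your quadrant arcs. If you supply the conical-rounding-plus-interpolation step and this identification, the radial-disk approach should work and is geometrically appealing, but as written the cap condition in the theorem is not established.
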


\begin{corollary}[Executive summary]
 When the three Legendrian links associated to diagram $D$ are all Legendrian unlinks of Legendrian unknots of Thurston-Bennequin number $tb = -1$, then $L(D)$ can be filled with disjoint Lagrangian disks to give a closed, embedded Lagrangian surface $\overline{L}(D)$ in $\CP{2}$ determined by $D$.
\end{corollary}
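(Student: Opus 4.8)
The plan is to produce $\overline{L}(D)$ by filling each of the three Darboux balls with a disjoint union of standard Lagrangian disks and gluing these to the cap $L(D)$ along the three $3$--spheres. Write $W = \CP{2}\setminus(B_1\sqcup B_2\sqcup B_3)$ for the complement of the three disjoint Darboux balls, $S^3_i = \del B_i$, and $\Lambda_i\subset S^3_i$ for the Legendrian link associated to $D$ on the $i$--th sphere. First I would record the behaviour of $L(D)$ near the $S^3_i$ guaranteed by the preceding theorem: there is a Liouville field $V_i$ defined near $S^3_i$, pointing from $B_i$ into $W$, to which $L(D)$ is tangent; its flow gives a collar $(-\delta,\delta)\times S^3_i\subset\CP{2}$ with $\omega = d(e^t\alpha_i)$ for the standard contact form $\alpha_i$, in which $L(D)$ is \emph{cylindrical}, i.e.\ invariant under the flow of $V_i$, and meets $\{0\}\times S^3_i$ in $\Lambda_i$. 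In particular the germ of $L(D)$ along each $S^3_i$ is determined by $\Lambda_i$ together with the Liouville collar.

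Next I would fill the balls. From the point of view of $B_i$ the field $V_i$ points outward, so $S^3_i$ is a convex boundary; identifying $(B_i,\omega,V_i)$ with the standard symplectic $4$--ball carrying the radial Liouville field, a maximal--$\tb$ Legendrian unknot on $S^3_i$ bounds the standard Lagrangian disk cut out by a Lagrangian plane through the origin, and this disk is everywhere tangent to the radial field, hence cylindrical near $S^3_i$, and is an exact Lagrangian filling of its boundary. By hypothesis $\Lambda_i$ is a Legendrian unlink of $\tb=-1$ Legendrian unknots, so its $k_i$ components may be placed in pairwise disjoint Darboux sub--balls of $B_i$ (immediate for the split model; in general one uses the classification of Legendrian unknots by $(\tb,r)$ and the Legendrian isotopy extension theorem to carry the split model to one with boundary $\Lambda_i$ by an ambient symplectic isotopy, dragging the standard disks along). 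This produces $k_i$ pairwise disjoint Lagrangian disks whose union $F_i\subset B_i$ is an exact Lagrangian filling of $\Lambda_i$, cylindrical near $S^3_i$.

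Finally I would glue and verify. In the Liouville collar $(-\delta,\delta)\times S^3_i$, both $L(D)$ and $F_i$ are invariant under the flow of $V_i$ and both meet $\{0\}\times S^3_i$ in $\Lambda_i$, so by uniqueness of integral curves they coincide there with $(-\delta,\delta)\times\Lambda_i$. Hence $\overline{L}(D) := L(D)\cup F_1\cup F_2\cup F_3$ is a smooth surface in $\CP{2}$: it is assembled from Lagrangian pieces whose tangent planes agree along the overlaps, so it is Lagrangian; it is embedded, since the $B_i$ are disjoint and the pieces overlap only inside the collars, where they are literally equal; and it is closed, since $\CP{2} = W\cup B_1\cup B_2\cup B_3$ is. One then reads off $\chi(\overline{L}(D)) = \chi(L(D)) + \sum_i k_i$, and, invoking uniqueness up to Hamiltonian isotopy of the Lagrangian disk filling of a maximal--$\tb$ unknot, that the Lagrangian isotopy class of $\overline{L}(D)$ depends only on $D$.

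The main obstacle is the local picture at the $S^3_i$: one must check that the hypothesis ``$\Lambda_i$ is a Legendrian unlink of $\tb=-1$ unknots'' is \emph{exactly} the condition producing a disjoint Lagrangian disk filling with the cylindrical end needed for a smooth Lagrangian gluing (recall that a Legendrian knot bounding a Lagrangian disk in $B^4$ must be smoothly slice with $\tb=-1$, so the maximal--$\tb$ unknot is the only possibility). The tools involved — the standard disk filling of the maximal--$\tb$ unknot, the Eliashberg--Fraser classification of Legendrian unknots, and Legendrian isotopy extension — are all classical, so this step is careful bookkeeping rather than a genuinely new difficulty; the new content is entirely in the cap $L(D)$ furnished by the preceding theorem.
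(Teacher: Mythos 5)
Your overall strategy (fill the three Legendrians by disjoint Lagrangian disks and glue to the cap along Liouville collars) is the same as the paper's, but your gluing is organized differently: you build the disks inside the actual Darboux balls $B_i\subset\CP{2}$, with boundary literally equal to $\Lambda_i$ and tangent to the same Liouville field $V_i$ as the cap, so the union is smooth by uniqueness of the flow-out of $\Lambda_i$. The paper instead constructs the filling abstractly in a Liouville-filled $B^4$ (its Proposition~2.9) and invokes a separate gluing proposition (Proposition~2.8), which completes both sides and allows a rescaling of the filling's symplectic form. Your in-place gluing, if the disks can really be produced inside $B_i$ with exactly the right boundary and collar, is a legitimate and in some ways cleaner variant, since it avoids having to identify the glued-up manifold with $\CP{2}$ afterwards; the transfer step you sketch (extend a Legendrian isotopy to a contact isotopy, lift to a cut-off homogeneous Hamiltonian isotopy of the ball, drag the disks) can be made rigorous, though for the multi-component statement you must cite the classification of Legendrian \emph{unlinks} (Eliashberg--Fraser), not merely the $(\tb,\mathrm{rot})$ classification of unknots.

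The genuine gap is your claim that the disjoint disk filling of the split model is ``immediate.'' The flat disk cut out by a Lagrangian plane works for a \emph{single} $\tb=-1$ unknot, but any two Lagrangian planes through the origin of the same ball intersect there, and affine Lagrangian planes missing the origin are not tangent to the radial Liouville field and do not even meet $S^3_i$ in Legendrians; likewise, placing each component's flat disk in its own Darboux \emph{sub-ball} of $B_i$ leaves the disk boundaries on the sub-ball boundaries, not on $S^3_i$, so you do not get a filling of $\Lambda_i$ with cylindrical ends at $S^3_i$. Producing $k_i\geq 2$ pairwise disjoint Lagrangian disks in the ball (or in the negative symplectization) whose cylindrical ends sit over the split unlink is exactly the nontrivial content of the paper's Proposition~2.9, which it obtains from the elementary Lagrangian cobordism constructions (the ``death''/minimum disks of Bourgeois--Sabloff--Traynor and Ekholm--Honda--K\'alm\'an, via the survey cited there), each such disk living over a contact Darboux ball containing its component so that disjointness of the split components yields disjointness of the disks. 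Your argument needs this ingredient (or an equivalent construction) spelled out; with it supplied, the rest of your proof goes through.
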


Note that an outline of a proof of this corollary appears in \cite{Blackwell}, and the proof in this paper essentially follows the same ideas but has a slightly different organization.

We now give a more explicit setup that allows us to state these results precisely. By convention, the vertical lines in the grid are colored red and called $\alpha$ curves, the horizontal lines are colored blue and called $\beta$ curves, and when we draw the slope $-1$ grid lines these are colored green and called $\gamma$ curves. Note that there is an order $3$ element of $SL_2 \ZZ$, i.e. an order $3$ orientation preserving automorphism of $T^2$, which cyclically permutes $(\alpha,\beta,\gamma)$. Thus, by applying this automorphism, we can choose to view any pair of colors as horizontal and vertical, as long as the cyclic order is preserved. In other words, we can make $\beta$ vertical, $\gamma$ horizontal and $\alpha$ diagonal, or we can make $\gamma$ vertical, $\alpha$ horizontal and $\beta$ diagonal.

For each pair of colors we get a link diagram by first cyclically permuting until that pair of colors are horizontal and vertical, then connecting dots horizontally and vertically by straight line segments, and then adopting the convention that horizontal segments pass over vertical segments when they cross. Now by rotating this diagram $45^\circ$ clockwise and either smoothing corners or replacing corners with cusps so as to avoid vertical tangencies, we obtain a front diagram for a Legendrian link in $S^3$. We will call this the {\em standard Legendrianization of a grid diagram}. Thus each triple grid diagram $D$ gives three Legendrian links $\Lambda_{\alpha \beta}(D)$, $\Lambda_{\beta \gamma}(D)$ and $\Lambda_{\gamma \alpha}(D)$, which we think of as living in three different copies of the standard contact $(S^3,\xi)$, labelled $(S^3_{\alpha \beta}, \xi_{\alpha \beta})$, $(S^3_{\beta \gamma}, \xi_{\beta \gamma})$ and $(S^3_{\gamma \alpha}, \xi_{\gamma \alpha})$, respectively. This whole process is illustrated in \autoref{fig:grid_Leg}.

\begin{figure}[h]
    \centering
    \includegraphics[width=.75\linewidth]{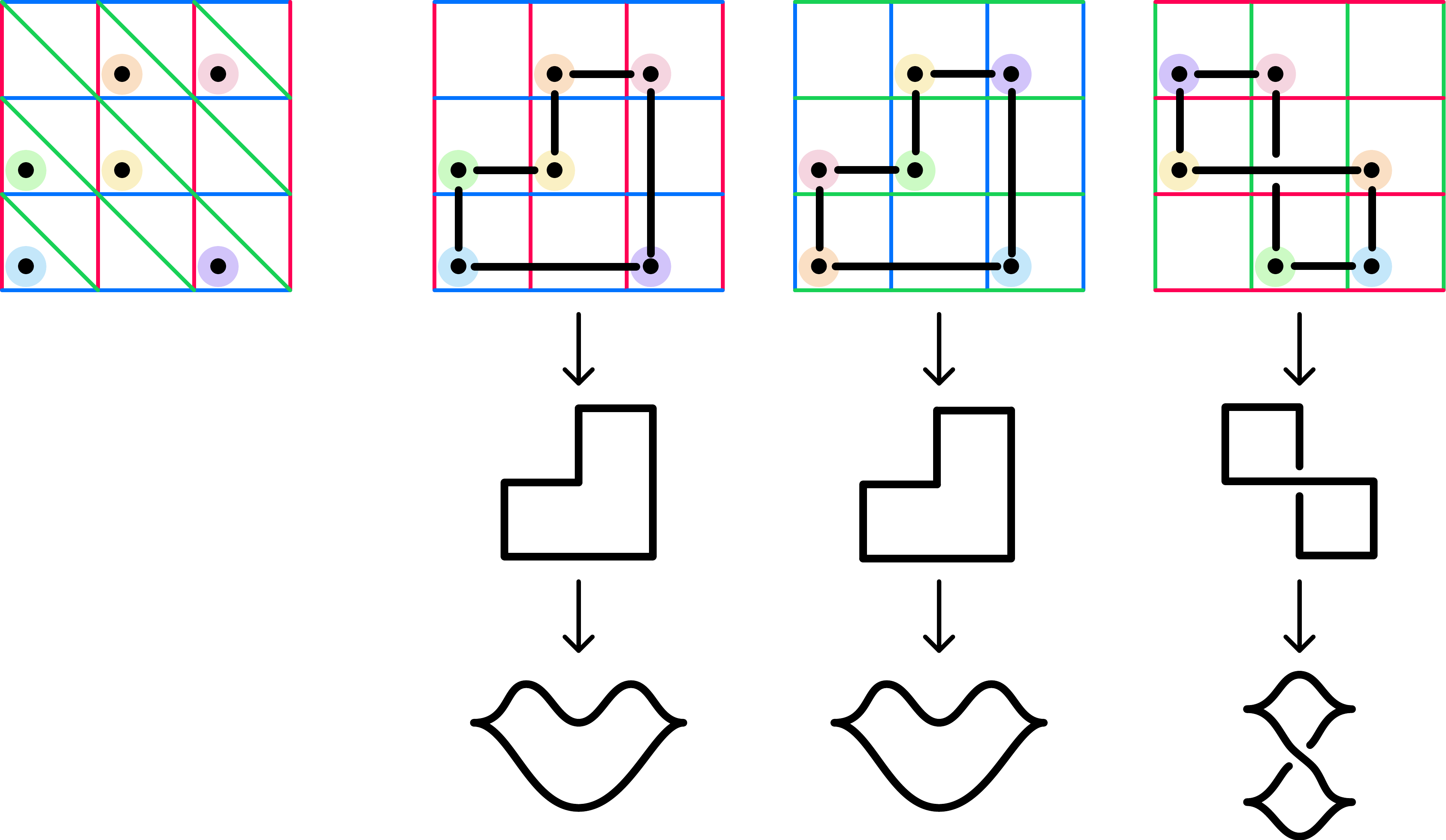}
    \caption{A (combinatorial) triple grid diagram. The right-hand side shows separately the three grids making up the triple grid diagram on the left-hand side, along with the knot represented by each grid. Colors are used to indicate where each vertex in the triple grid diagram shows up in the three individual grids. Below the grids the process for obtaining Legendrians is shown.
    \label{fig:grid_Leg}}
\end{figure}

Also naturally associated to a triple grid diagram $D$ is an abstract trivalent graph $\Gamma(D)$ with edges colored red, blue and green (or labelled $\alpha$, $\beta$ or $\gamma$) where the vertices are the dots in the diagram and two dots are connected by an appropriately colored/labelled edge when they lie in the same row, column or diagonal. See for instance \autoref{fig:graph}. The edges at each vertex are cyclically ordered by the $(\alpha,\beta,\gamma)$ order, and we use this to construct an abstract ``ribbon'' surface $R(D)$ which is a thickening of $\Gamma(D)$. Note that this is not the standard ``fat graph'' construction coming from a cyclic ordering of edges at each vertex, since this standard construction produces an orientable surface, whereas $R(D)$ may or may not be orientable. 

\begin{figure}[h]
    \centering
    \includegraphics[width=.4\linewidth]{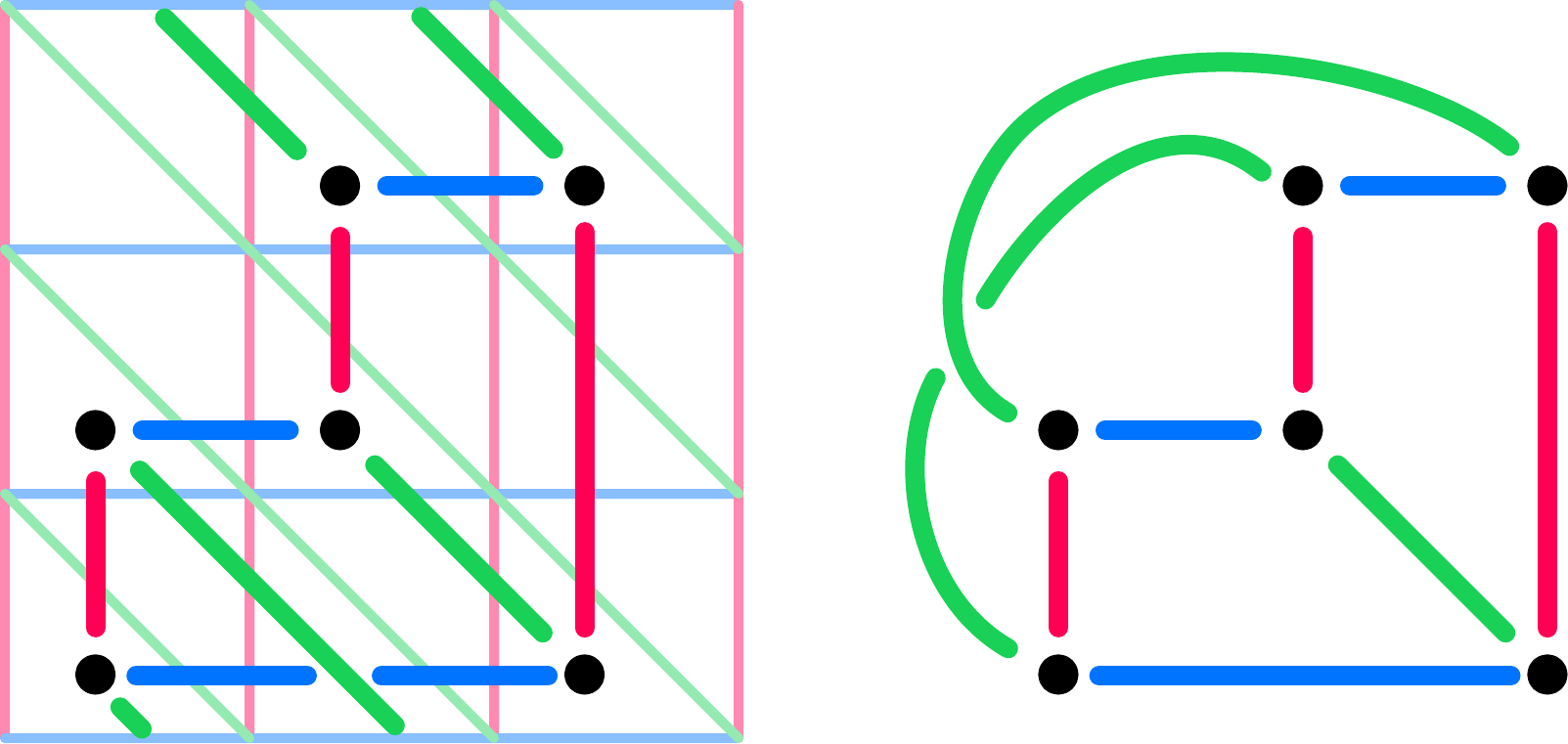}
    \caption{The trivalent graph $\Gamma(D)$ associated to the triple grid diagram $D$ shown in \autoref{fig:grid_Leg}. The left-hand side depicts the graph still on the grid, with the green edges wrapping around the torus, while the right-hand side depicts the graph abstractly.
    \label{fig:graph}}
\end{figure}

Begin with an oriented copy of $D^2_v$ for each vertex $v$ of $\Gamma(D)$ and attach an {\em orientation-reversing}, that is, {\em half-twisted}, band from $D^2_v$ to $D^2_w$ whenever there is an edge from $v$ to $w$, with the band colored/labelled the same as its corresponding edge. Attach these bands in order red, blue, green going clockwise around the boundary of each $D^2_v$. We discuss orientability of $R(D)$ in more detail in \autoref{sec:examples}.
Note that the boundary of $R(D)$ is naturally the disjoint union of three $1$--manifolds $\partial_{\alpha \beta} R(D)$, $\partial_{\beta \gamma} R(D)$ and  $\partial_{\gamma \alpha} R(D)$, based on the pairs of colors making up each component of $\partial R(D)$.

In $\CP{2}$ with projective coordinates $\{[z_1:z_2:z_2]\}$, consider the three disjoint closed balls:
\begin{align*}
 B_1 &= \{[z_1:z_2:1] \mid |z_1|^2 + |z_2|^2 \leq \tfrac{1}{5} \}, \\
 B_2 &= \{[z_1:1:z_3] \mid |z_1|^2 + |z_3|^2 \leq \tfrac{1}{5} \}, \\
 B_3 &= \{[1:z_2:z_3] \mid |z_2|^2 + |z_3|^2 \leq \tfrac{1}{5} \}.
\end{align*}
(The exact sizes are not important except that they should be small enough to accommodate an explicit construction given later, and $\tfrac{1}{5}$ is small enough.) Endow $\CP{2}$ with the standard symplectic form $\omega$ (which we express later in toric coordinates), and then let $X = \CP{2} \setminus \interior (B_1 \cup B_2 \cup B_3)$, so that $(X,\omega)$ is a symplectic $4$--manifold with three concave (hence contact) $S^3$ boundary components $(\partial_1 X, \xi_1)$, $(\partial_2 X, \xi_2)$, $(\partial_3 X, \xi_3)$. Each of these contact structures $\xi_i$ is induced by a standard radial Liouville vector field $V_i$ on $B_i$, pointing in along $\partial_i X$, and thus each $(-\partial_i X, \xi_i)$ is contactomorphic to the standard contact $(S^3,\xi)$.

Now we can restate our main theorem.

\setcounter{theorem}{2}
\begin{theorem}[Restated more precisely] \label{T:LagrangianCap}
  Given any triple grid diagram $D$ there exists a properly embedded Lagrangian surface $L(D) \subset (X,\omega)$ satisfying the following properties.
 \begin{enumerate}
  \item In a neighborhood of each $\partial_i X$, $L(D)$ is tangent  to $V_i$, so that $L(D) \cap \partial_i X$ is a Legendrian link $\Lambda_i$ in $(-\partial_i X, \xi_i)$ in $X$.
  \item There are contactorphisms taking Legendrian links to Legendrian links as follows:
  \begin{align*}
    (-\partial_1 X, \xi_1, \Lambda_1) &\to (S^3_{\alpha \beta}, \xi_{\alpha \beta}, \Lambda_{\alpha \beta}), \\
    (-\partial_2 X, \xi_2, \Lambda_2) &\to (S^3_{\beta \gamma}, \xi_{\beta \gamma}, \Lambda_{\beta \gamma}), \\
    (-\partial_3 X, \xi_3, \Lambda_3) &\to (S^3_{\gamma \alpha}, \xi_{\gamma \alpha}, \Lambda_{\gamma \alpha}).
  \end{align*}
  \item $L(D)$ is diffeomorphic to the abstract surface $R(D)$, via a diffeomorphism taking $\Lambda_1$ to $\partial_{\alpha \beta} R(D)$, $\Lambda_2$ to $\partial_{\beta \gamma} R(D)$ and $\Lambda_3$ to $\partial_{\gamma \alpha} R(D)$.
 \end{enumerate}
\end{theorem}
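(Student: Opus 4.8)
The plan is to construct $L(D)$ explicitly in toric action--angle coordinates on $(\CP{2},\omega)$ and then verify properties (1)--(3) in turn. Let $\mu\colon\CP{2}\to\Delta$ be the moment map onto a triangle, chosen so that each ball $B_i$ is a standard toric corner-neighborhood of a vertex of $\Delta$ and so that over the interior $\mu^{-1}(\interior\Delta)\cong\interior\Delta\times T^2$ with $\omega=dr_1\wedge d\theta_1+dr_2\wedge d\theta_2$, where the torus $T^2=\RR^2/\ZZ^2$ is identified with the one carrying $D$. The three edges of $\Delta$ are matched up with the three grid directions $\alpha,\beta,\gamma$ so that, near the vertex $v_{\alpha\beta}=E_\alpha\cap E_\beta$, the two circles of $T^2$ that degenerate are exactly the ones ``seen'' by the standard contact $S^3=\partial B_1$, and similarly cyclically; this is the geometric reason that $\partial_1 X$ will end up carrying a link built from the $\alpha\beta$ grid rather than one of the other two.

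I would then build $L(D)$ from explicit local pieces whose combinatorics reproduce the recipe defining $R(D)$. Over a small ball around the barycenter $p_0\in\interior\Delta$, place for each point $v\in T^2$ of $D$ a ``vertex sheet'', a disk of the form $\{r\approx p_0\}\times\{v\}$, which is Lagrangian because $\omega$ vanishes on the $r$-directions. For each grid line occupied by a pair of points $v,v'$, attach a Lagrangian band joining the corresponding two sheets and running radially outward toward the matching edge of $\Delta$ (toward $E_\alpha$ for an $\alpha$-line, and so on): the Lagrangian condition pins down both the radial direction and the arc in $T^2$ swept out by the band, and the way the band turns toward the edge produces a half-twist, so that the ribbon $R(D)$ is reproduced band-for-band, with the cyclic order $\alpha,\beta,\gamma$ around each sheet inherited from the cyclic order of the edges of $\Delta$. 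Choosing the sheets small and the bands thin and in general position (using the small perturbation of $D$ permitted by the hypotheses) makes the union $L(D)$ embedded; since this union is literally an oriented disk per vertex of $\Gamma(D)$ together with a half-twisted, correctly coloured, correctly cyclically ordered band per edge, it is diffeomorphic to $R(D)$, and property (3) together with the identification of the three boundary pieces can be read off directly from the construction.

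For (1) and (2) I would localize near a vertex $v_{\alpha\beta}$ of $\Delta$. There the toric model is standard $(\CC^2,\omega_{\mathrm{std}})$ with $B_1$ a round ball and $V_1$ the radial Liouville field, so (as recorded in the setup) $(-\partial_1 X,\xi_1)$ is the standard contact $S^3$; the parts of $L(D)$ meeting $B_1$ are vertex sheets and half-bands arranged radially, hence tangent to $V_1$, so $L(D)\cap\partial_1 X$ is automatically a Legendrian link $\Lambda_1$. The remaining, and in my view central, task is to identify $\Lambda_1$ with the standard Legendrianization $\Lambda_{\alpha\beta}(D)$: one must check that in $S^3$ near the corner the sheet-and-band picture projects to the $\alpha\beta$ grid's link diagram, with the vertical $\alpha$-bands and horizontal $\beta$-bands crossing in the correct order (``horizontal over vertical'', realized by a controlled choice of their relative radial heights), and that the prescribed $45^\circ$ rotation together with corner smoothing/cusping is precisely the linear change of basis carrying the grid coordinates on $T^2$ to standard contact coordinates on $\partial B_1$ and the induced characteristic foliation; one then packages this into the required contactomorphism and runs the same argument, cyclically, at the other two corners. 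I expect making all of these conventions -- the crossing rule, the cusps, and the contactomorphism to the correct one of the triples $(S^3_{\alpha\beta},\xi_{\alpha\beta},\Lambda_{\alpha\beta})$, $(S^3_{\beta\gamma},\xi_{\beta\gamma},\Lambda_{\beta\gamma})$, $(S^3_{\gamma\alpha},\xi_{\gamma\alpha},\Lambda_{\gamma\alpha})$ -- line up simultaneously and consistently to be the main obstacle; the embeddedness bookkeeping in the construction step is a secondary nuisance, and gluing the local pieces into a smooth global Lagrangian is routine once the corner model is fixed.
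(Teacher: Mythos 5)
Your overall strategy --- toric action--angle coordinates, a Lagrangian ``vertex disk'' in the $p$--directions over the interior of the moment polytope for each point of $D$, Lagrangian bands running out toward the edges, and a corner-by-corner analysis at the three balls --- is the same as the paper's, but there is a genuine gap exactly at the step you compress into ``the Lagrangian condition pins down both the radial direction and the arc in $T^2$ swept out by the band'' and ``the parts of $L(D)$ meeting $B_1$ are vertex sheets and half-bands arranged radially, hence tangent to $V_1$.'' These two requirements are in tension. Near the central handlebodies $H_\alpha, H_\beta, H_\gamma$ the natural Lagrangian band is obtained by sweeping an arc in a meridional disk along the \emph{constant} symplectic vector fields $S_{13}=\partial_{p_1}$, etc.; near $\partial_i X$ the cap condition forces tangency to the \emph{radial} Liouville field $V_i$, and a Lagrangian tangent to $V_i$ meets the sphere in a Legendrian, so the arc seen there must be a Legendrian lift (front-type arc), not the flat arc you flowed outward --- and these two local models do not agree on the nose. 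The heart of the paper's proof is the mechanism reconciling them: flat arcs in meridional disks subject to an enclosed-area condition, their explicit Legendrian lifts $\tilde\lambda(t)=l_i-\int_{-1}^{t} p\,\mu'\,d\tau$, which coincide with the flat arcs for $p\ge 1/2$ precisely because of that area condition (\autoref{P:FlatAndLegendrianTangles}), and an explicit interpolating parameterization over the rectangles $R_{\bullet,i}$ (the cutoff function $f$ with $p_1=s$, $q_1=f'(s)\tilde\lambda_\alpha(t)$, $p_2=f(s)p_\alpha(t)$, $q_2=\mu_\alpha(t)$) that is Lagrangian, matches the symplectically flowed band on one side, and is $V_1$-tangent with Legendrian trace on the other. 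Nothing in your proposal supplies this interpolation, and ``general position'' cannot substitute for it: embeddedness is achieved in the paper by slope control on the fronts (so the Lagrangian projections of the lifts are embedded arcs), not by perturbation, since the Lagrangian/Legendrian conditions are not generic.

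A second, smaller issue: you propose to realize the ``horizontal over vertical'' crossing rule by ``a controlled choice of their relative radial heights.'' In the construction the over/under data is not chosen; it is forced by which handlebody ($H_\alpha$ or $H_\beta$) each band dips into, and verifying that the resulting glued Legendrian tangles are Legendrian isotopic to the standard Legendrianization of the grid with exactly that convention is genuine content, carried out in \autoref{P:JoiningLegendrianTangles} by writing the front as copies of the skewed-N local front and applying Legendrian Reidemeister I and II moves. Relatedly, in the paper the bands are geometrically untwisted and connect two stacked disks both oriented by the base, which is what produces the orientation-reversing bands of $R(D)$; your statement that ``the way the band turns toward the edge produces a half-twist'' describes the abstract ribbon model rather than anything the geometric construction needs to, or does, realize band-by-band.
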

In short, $L(D)$ is a Lagrangian cap in $X$ for $\Lambda_{\alpha \beta} \sqcup \Lambda_{\beta \gamma} \sqcup \Lambda_{\gamma \alpha}$. We will call such a cap -- that which caps off a disjoint union of three Legendrians in three $S^3$'s -- a {\em triple cap}. 

Finally we can restate, and prove, our main corollary.

\setcounter{corollary}{3}
\begin{corollary}[Restated more precisely]  \label{C:ClosedLagrangian}
A triple grid diagram $D$ for which each of the three Legendrian links $\Lambda_{\alpha \beta}(D)$, $\Lambda_{\beta \gamma}(D)$ and $\Lambda_{\gamma \alpha}(D)$ is a Legendrian unlink of Legendrian unknots with Thurston-Bennequin number $tb = -1$ determines a closed embedded Lagrangian surface $\overline{L}(D)$ in $\CP{2}$ diffeomorphic to the result of attaching a disk to each boundary component of $R(D)$.
\end{corollary}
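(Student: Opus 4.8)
The plan is to produce, for each $i=1,2,3$, a properly embedded Lagrangian surface $F_i\subset B_i$ with $\partial F_i=\Lambda_i$ that is tangent to the radial Liouville field $V_i$ near $\partial B_i=\partial_i X$ and consists of $\#\pi_0(\Lambda_i)$ pairwise disjoint Lagrangian disks, and then to set $\overline{L}(D):=L(D)\cup F_1\cup F_2\cup F_3$, granting \autoref{T:LagrangianCap}. The starting point is the classical model disk: in $\CC^2$ with the standard symplectic form and radial Liouville field, the real plane $\{y_1=y_2=0\}$ is Lagrangian and Liouville-invariant, so $\RR^2\cap B^4$ is an embedded Lagrangian disk in the standard ball, conical near the boundary, whose boundary great circle is a Legendrian unknot; a Lagrangian adjunction computation shows its Thurston--Bennequin number is $-1$ (indeed $-1$ is the only value for which a Lagrangian disk filling in $B^4$ exists).

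Next, fix $i$ and write $n=\#\pi_0(\Lambda_i)$. Since $\Lambda_i$ is a Legendrian unlink of $tb=-1$ unknots, the classification of Legendrian unknots (Eliashberg--Fraser) shows $\Lambda_i$ is Legendrian isotopic in $(S^3,\xi)$ to the split union $\Lambda^0$ of $n$ standard $tb=-1$ Legendrian unknots lying in $n$ disjoint Darboux balls; placing one copy of the model disk in each such sub-ball gives $n$ pairwise disjoint conical Lagrangian disks with total boundary $\Lambda^0$. The trace of a Legendrian isotopy from $\Lambda^0$ to $\Lambda_i$ is an exact Lagrangian cobordism that is cylindrical near both ends, and since the underlying link is an $n$-component unlink this cobordism is a disjoint union of $n$ annuli. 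Decomposing $B_i$, via the flow of $V_i$, into a smaller concentric ball and a symplectization collar, and stacking these annuli onto the model disks, produces $n$ pairwise disjoint properly embedded Lagrangian disks $F_i^1,\dots,F_i^n\subset B_i$ with $\partial(F_i^1\sqcup\dots\sqcup F_i^n)=\Lambda_i$ and each $F_i^j$ conical near $\partial B_i$; set $F_i=F_i^1\sqcup\dots\sqcup F_i^n$.

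For the gluing, observe that near $\partial_i X$ the cap $L(D)$ is tangent to $V_i$ by part (1) of \autoref{T:LagrangianCap} and meets $\partial_i X$ exactly in $\Lambda_i$, while near $\partial B_i$ the surface $F_i$ is tangent to $V_i$ and meets $\partial B_i=\partial_i X$ exactly in $\Lambda_i$. Because $V_i$ is nonvanishing and transverse to $\partial_i X$ (pointing from $B_i$ into $X$), on a sufficiently small two-sided collar of $\partial_i X$ both $L(D)$ and $F_i$ must coincide with the flow-out of $\Lambda_i$ along $V_i$ --- $L(D)$ the part flowing into $X$ and $F_i$ the part flowing into $B_i$ --- so their union there is the full Liouville flow-out of $\Lambda_i$, a smooth embedded Lagrangian surface (the flow-out of a Legendrian along a Liouville field is Lagrangian). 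Hence $\overline{L}(D)=L(D)\cup F_1\cup F_2\cup F_3$ is a smoothly embedded surface; it is Lagrangian for $\omega$ (both pieces are, and they match smoothly across each $\partial_i X$); and it is embedded ($L(D)$ is embedded in $X$, the disks making up each $F_i$ are embedded and pairwise disjoint in $B_i$, the $B_i$ are pairwise disjoint, and $L(D)\cap F_i=\Lambda_i$). Finally, part (3) of \autoref{T:LagrangianCap} gives a diffeomorphism $L(D)\cong R(D)$ carrying $\Lambda_1\sqcup\Lambda_2\sqcup\Lambda_3$ to $\partial R(D)$, and we have attached exactly one disk to each component of this boundary, so $\overline{L}(D)$ is diffeomorphic to $R(D)$ with a disk glued along every boundary circle, as claimed.

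The main obstacle is concentrated in the second step: upgrading the single model disk to a collection of pairwise disjoint Lagrangian disks whose boundary is exactly the given Legendrian $\Lambda_i$ --- not merely a Legendrian-isotopic copy --- while keeping them conical near $\partial B_i$. This is where one genuinely needs the classification of Legendrian unknots (so that $\Lambda_i$ is split and standard), Gray stability (to realize the Legendrian isotopy from $\Lambda^0$ to $\Lambda_i$ by an ambient contact isotopy, hence to build the trace cobordism), and some care that the trace of an isotopy of an unlink remains a disjoint union of annuli cylindrical near both ends. By contrast, the model-disk construction and the gluing are routine; in particular, the smoothness of the gluing is immediate once one observes that both the cap and the fillings are tangent to $V_i$ along each $\partial_i X$.
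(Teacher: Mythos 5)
Your overall route is the same as the paper's: fill each of the three maximal Legendrian unlinks by disjoint Lagrangian disks in the corresponding ball $B_i$ and glue these fillings to the cap $L(D)$ from \autoref{T:LagrangianCap}; your hands-on gluing via tangency to $V_i$ (both pieces coincide with the Liouville flow-out of $\Lambda_i$ near $\partial_i X$) is a correct, and in fact simpler, substitute for the paper's abstract \autoref{prop:gluing}, made possible because you build the fillings directly inside $(B_i,\omega,V_i)$ rather than in an abstract Liouville filling. The Eliashberg--Fraser/Chantraine step (trace of a Legendrian isotopy giving disjoint cylindrical annuli stacked in a collar) also matches the paper's proof of \autoref{prop:unlink-filling}.

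However, there is a genuine gap at the one place you treat as routine: the claim that ``placing one copy of the model disk in each such sub-ball gives $n$ pairwise disjoint conical Lagrangian disks with total boundary $\Lambda^0$.'' The flat disk $\RR^2\cap B^4$ is the cone over a Legendrian great circle; it passes through the center of the ball and is not contained in (the cone over, or any neighborhood of) a small Darboux ball in the boundary, so it cannot be ``placed in a sub-ball'' by a contactomorphism of that Darboux ball. Worse, the obstruction is not merely technical: any two distinct Lagrangian planes through the origin meet there, and their boundary great circles are Hopf-linked, so no collection of honest copies of the conical model disk can have disjoint interiors and unlinked boundaries once $n\geq 2$. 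Trying to repair this by flowing/isotoping a flat disk into position produces disks that wander through the deep interior of $B_i$ and need not be disjoint from one another. What you actually need is a filling of the standard $tb=-1$ unknot by a Lagrangian disk that is \emph{localized} in the symplectization of a Darboux ball (so that the $n$ disks can be taken in disjoint regions); this is true but is a real construction --- the ``minimum''/death cobordism of Bourgeois--Sabloff--Traynor and Ekholm--Honda--K\'alm\'an, which is exactly what the paper invokes (Theorem~2 of the survey cited in the proof of \autoref{prop:unlink-filling}). If you replace your ``place copies of the model disk'' step by that citation (or by an explicit local construction of such a disk), the rest of your argument goes through.
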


\begin{proof}
 The Legendrian unlink of Legendrian unknots with $tb = -1$ has a filling in $B^4$ by disjoint Lagrangian disks (\autoref{prop:unlink-filling}) and these fillings can be glued (via \autoref{prop:gluing}) to the triple cap produced in \autoref{T:LagrangianCap}.
\end{proof}

\setcounter{corollary}{4}
\begin{remark}
    More generally, as pointed out by our referee, whenever all three Legendrian links $\Lambda_{\alpha \beta}(D)$, $\Lambda_{\beta \gamma}(D)$ and $\Lambda_{\gamma \alpha}(D)$ are Lagrangian fillable we can glue the fillings to our triple cap to produce a closed Lagrangian surface.
\end{remark}

We can also obtain immersed surfaces from triple grid diagrams under more general conditions.

\setcounter{corollary}{5}
\begin{corollary}
\label{C:ImmersedCase}
Let $D$ be a triple grid diagram such that each component of each of the three Legendrian links $\Lambda_{\alpha \beta}(D)$, $\Lambda_{\beta \gamma}(D)$ and $\Lambda_{\gamma \alpha}(D)$ has rotation number $0$.  Then $D$ determines an immersed Lagrangian surface $\overline{L}(D)$ in $\CP{2}$ obtained by gluing immersed Lagrangian disks to the boundary components of $R(D)$.
\end{corollary}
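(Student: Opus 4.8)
The plan is to follow the proof of \autoref{C:ClosedLagrangian} nearly verbatim, replacing the embedded Lagrangian disk fillings by immersed ones. By \autoref{T:LagrangianCap} the diagram $D$ gives a triple cap $L(D)\subset(X,\omega)$ for $\Lambda_{\alpha\beta}(D)\sqcup\Lambda_{\beta\gamma}(D)\sqcup\Lambda_{\gamma\alpha}(D)$, tangent near each $\partial_iX$ to the Liouville field $V_i$ of the removed ball $B_i$. The one new ingredient required is an immersed version of \autoref{prop:unlink-filling}:
\smallskip

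\noindent\emph{Claim ($\ast$): every Legendrian knot $\Lambda$ in the standard contact $(S^3,\xi)$ with rotation number $r(\Lambda)=0$ bounds, in the standard Darboux ball $(B^4,\omega_{\mathrm{std}})$, an immersed Lagrangian disk with finitely many transverse double points in its interior that is tangent to the radial Liouville vector field near $\partial B^4$.}
\smallskip

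Granting ($\ast$), the corollary follows immediately. Under the hypothesis every component of every $\Lambda_{ij}(D)$ has rotation number $0$, so ($\ast$) supplies an immersed Lagrangian disk bounded by each such component inside the appropriate ball $B_i$; after a small perturbation the finitely many intersections among distinct disks are transverse double points in the interior, so their union is an immersed Lagrangian surface in $B_i$ with boundary $\Lambda_{ij}(D)$, smoothly a disjoint union of disks, tangent to $V_i$ near $\partial_iX$. The gluing construction of \autoref{prop:gluing} is local near the contact boundary and hence insensitive to double points in the interior; gluing these three fillings to $L(D)$ produces a closed immersed Lagrangian surface $\overline{L}(D)\subset\CP{2}$, which by \autoref{T:LagrangianCap}(3) is $R(D)$ with a disk attached along each boundary circle, exactly as in \autoref{C:ClosedLagrangian}.

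It remains to establish ($\ast$), which I expect to be the crux. One route is soft: since $D^2$ is contractible, formal exact Lagrangian immersion data for a disk into $(B^4,\omega_{\mathrm{std}})$ with boundary a prescribed Legendrian $\Lambda$ and tangent to the Liouville field along the boundary is unobstructed except for the Maslov index of the boundary loop in the Lagrangian Grassmannian, a quantity measured by $r(\Lambda)$ that vanishes if and only if $r(\Lambda)=0$; Gromov's $h$-principle for Lagrangian immersions, in its relative form with Liouville-tangent Legendrian boundary, then promotes the formal solution to a genuine immersed Lagrangian disk, with isolated transverse double points after a generic perturbation. A second, more hands-on route keeps with the combinatorial spirit of the paper: resolve every crossing of a front diagram of $\Lambda$ by a Lagrangian saddle ($0$-resolution) cobordism in the sense of Ekholm--Honda--K\'alm\'an, chosen so that each resulting crossingless component still has rotation number $0$ (possible since rotation number is additive under $0$-resolutions and the total is $0$); each such component is a Legendrian unknot with $r=0$ and $tb=-1-2k$ for some $k\geq 0$, and bounds an explicit immersed Lagrangian disk with $k$ double points obtained from the embedded Lagrangian disk of the $tb=-1$ unknot by concatenating $k$ standard immersed ``pinch'' cobordisms; concatenating these disks with the saddle cobordisms yields the desired immersed filling of $\Lambda$. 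The only delicate point in this second route is the combinatorial bookkeeping of ordering the resolutions so that no component is ever stranded with nonzero rotation number, and in any case it can be bypassed by the $h$-principle argument above.
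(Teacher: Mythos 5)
Your reduction is exactly the paper's: the whole content of \autoref{C:ImmersedCase} is your Claim ($\ast$), which is precisely \autoref{prop:rot-0-immersed-filling}, and once that is in hand the paper's proof is the one line you give (run the proof of \autoref{C:ClosedLagrangian} with immersed fillings in place of embedded ones, gluing to the triple cap of \autoref{T:LagrangianCap} via \autoref{prop:gluing}, whose cylindrical-ends hypotheses only see the embedded ends, so interior double points are harmless, as you say). Where you diverge is in how ($\ast$) is proved. The paper does not appeal to a relative Lagrangian-immersion $h$-principle with Legendrian boundary; it uses the softer $h$-principle for Legendrian immersions (\autoref{prop:h-principle-legendrian}): since every component has rotation number $0$, the whole link is Legendrian homotopic to the maximal unlink $\mathcal{U}_n$, the trace of this homotopy is realized as an immersed Lagrangian concordance by the explicit crossing-change local model of \autoref{lemma:legendrian-homotopy-trace}, and this concordance is glued to the embedded disk filling of $\mathcal{U}_n$ from \autoref{prop:unlink-filling}. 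That argument is self-contained, handles the link all at once (so linking between components is automatically converted into transverse double points between the disks, which is what your perturbation remark is doing by hand), and even keeps track of the signs and number of double points. Your route 1 is morally the same softness but defers the crux to a relative $h$-principle ``with Liouville-tangent Legendrian boundary'' whose precise statement and reference you would need to pin down; as written it is an appeal to a theorem you have not actually stated.

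Your route 2 has a concrete gap: the final crossingless front obtained by $0$-resolving every crossing is independent of the order in which the resolutions are performed, so ``ordering the resolutions'' cannot prevent a resolved component from having nonzero rotation number; preservation of the \emph{total} rotation number (cusps are untouched) does not distribute zeros to the individual components. So the parenthetical ``possible since rotation number is additive and the total is $0$'' is not a proof, and for some fronts no ordering works without first changing the front by Legendrian isotopy or resolving only a subset of crossings. Since you explicitly fall back on route 1, this does not sink the proposal, but the clean fix is to replace both routes by the paper's homotopy-plus-trace argument for \autoref{prop:rot-0-immersed-filling}.
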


\begin{proof}
The proof is identical to the proof of \autoref{C:ClosedLagrangian}, except using the immersed Lagrangian fillings from \autoref{prop:rot-0-immersed-filling}.
\end{proof}

\subsection*{Further directions}

In this paper, we focused on the triple grid diagrams compatible with the standard toric structure on $\CP{2}$.  The three grid slopes $(0,1,\infty)$ are determined by the slopes of the boundaries of the compressing disks on $T^2$.  More generally, there exists almost-toric fibrations on $\CP{2}$ (indexed by Markov triples) that yield decompositions into three rational homology 4-balls.  These almost-toric fibrations can be encoded by a triple of compressing slopes as well.  Specifically, if $(a,b,c)$ is a Markov triple and we view $\alpha,\beta,\gamma$ as vectors in $\ZZ^2$, then the three compressing slopes are determined, modulo the action of $SL(2,\ZZ)$, by the equation
\[a^2 \alpha + b^2 \beta + c^2 \gamma = 0\]
Therefore, just as grid diagrams for (Legendrian) knots in $S^3$ generalize to knots in lens spaces, there is an immediate generalization of triple grid diagrams to these decompositions arising from almost-toric fibrations.  Etnyre, Min, Piccirillo and Roy recently reinterpreted these almost-toric fibrations as small symplectic caps of triples of universally tight contact structures on lens spaces \cite{Etnyre-Min-Piccirillo-Roy}.  A triple grid diagram should naturally determine a Lagrangian cap in these small symplectic caps.

In a future paper we will discuss the uniqueness of our constructions up to Hamiltonian isotopy; this is a subtle issue because, as mentioned above, Lagrangians are geometric objects which are sensitive to small perturbations. We will also discuss the extent to which Lagrangians ``occurring in nature'' can be shown to be Hamiltonian isotopic to Lagrangians constructed from triple grid diagrams, as well as the natural question of enumerating moves on triple grid diagrams that allow us to move between different triple grid diagrams representing appropriately ``equivalent'' Lagrangians.

\subsection*{Outline}
In \autoref{sec:Lagrangian} we give some background on Lagrangian fillings and caps of Legendrians, an in particular prove the propositions needed for the proofs of \autoref{C:ClosedLagrangian} and \autoref{C:ImmersedCase}. In \autoref{sec:proofthm} we show how to construct triple caps from triple grid diagrams, proving \autoref{T:LagrangianCap}. Finally, in \autoref{sec:examples} we discuss various examples and applications.

\subsection*{Acknowledgments}
All three authors would like to thank the Max Planck Institute for Mathematics for generous hospitality in 2019-20 when much of this work was initiated, the first author for the support of a postdoctoral position in 2022-23, and the second author for support during a visit in 2023. We also would like to sincerely thank the anonymous referees for their careful reading of our paper and their insightful comments. All three authors were supported by NSF Focused Research Group grant DMS-1664567 ``FRG: Collaborative Research: Trisections -- New Directions in Low-Dimensional Topology''. The second author was supported by NSF grant DMS-2005554 ``Smooth $4$--Manifolds: $2$--, $3$--, $5$-- and $6$--Dimensional Perspectives''.

\section{Lagrangian cobordisms, fillings, and caps} \label{sec:Lagrangian}

In this section, we review the background material on Lagrangian fillings and caps and state the gluing result (\autoref{prop:gluing}).  We also describe Lagrangian disk fillings in the cases of maximal Legendrian unlinks and immersed Lagrangian links where each component has rotation number 0.  Combining these Lagrangian fillings with the Lagrangian caps constructed in the previous section and the gluing result completes the proof of \autoref{C:ClosedLagrangian}.

\subsection{Basic definitions}

To motivate the construction, we recall some terminology and facts about Lagrangian fillings of Legendrian links.

\begin{definition}
\label{def:lagrangian-concordance}
Let $(Y,\Ker(\alpha))$ be a contact $3$-manifold with contact form $\alpha$ and $(\RR \times Y, d(e^t \alpha))$ its symplectization.  Let $\Lambda_{+}$ and $\Lambda_-$ be two Legendrian links in $(Y,\Ker(\alpha))$.  A {\it Lagrangian cobordism} (with cylindrical ends) from $\Lambda_-$ to $\Lambda_+$ is an embedded Lagrangian surface $L$ satisfying:
\begin{enumerate}
    \item $L \cap [-N,N] \times Y$ is compact,
    \item $L \cap [N,\infty) \times Y \cong [N,\infty) \times \Lambda_{+}$,
    \item $L\cap (-\infty,-N] \times Y \cong (-\infty,-N] \times \Lambda_{-}$,
\end{enumerate}
for some $N$ sufficiently large.

An {\it immersed Lagrangian cobordism} is defined similarly, as an immersed Lagrangian surface but with embedded cylindrical ends.
\end{definition}

\begin{remark}
The cylindrical ends condition of the definition is equivalent to requiring that the Liouville vector field $\partial_t$ is tangent to $L$ in the half-cylinders $(-\infty,N] \times Y$ and $[N,\infty) \times Y$.
\end{remark}

A priori, the cylindrical ends condition depends on the contact form $\alpha$ and the corresponding Liouville vector field.  However, if the ends are infinite, they are cylindrical for any chosen contact form, as explained in the following lemma.  

\begin{lemma} \label{rmk:contact}
Suppose that $L$ is a Lagrangian cobordism with cylindrical ends from $\Lambda_-$ to $\Lambda_+$ in $(R \times Y, d(e^t \alpha))$.  Let $\alpha' = f \alpha$ be another contact form for $\xi = \text{ker}(\alpha)$, where $f$ a positive function. Then the image of $L$ under the symplectomorphism $\Phi: (\RR \times Y, d(e^t \alpha)) \rightarrow (\RR \times Y, d(e^t \alpha'))$ given by the map
\[\Phi(t,x) = (-\log(f(x)) + t, x)\]
is also a Lagrangian cobordism from $\Lambda_-$ to $\Lambda_+$ with cylindrical ends.
\end{lemma}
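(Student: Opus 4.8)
The plan is to verify directly that the explicitly given map $\Phi$ is a symplectomorphism between the two symplectizations, that $\Phi(L)$ is again Lagrangian (which is automatic once $\Phi$ is a symplectomorphism), and that $\Phi$ carries cylindrical ends to cylindrical ends. First I would check that $\Phi^*(d(e^t\alpha')) = d(e^t\alpha)$. Writing $\Phi(t,x) = (s,x)$ with $s = t - \log f(x)$, and using $\alpha' = f\alpha$, we have $e^s\alpha' = e^{t-\log f(x)}\cdot f(x)\alpha = e^t\alpha$ on the nose, so pulling back the $1$-form $e^s\alpha'$ under $\Phi$ gives $e^t\alpha$; taking $d$ of both sides shows $\Phi$ is a symplectomorphism. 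Since $L$ is Lagrangian for $d(e^t\alpha)$ and $\Phi$ is a symplectomorphism, $\Phi(L)$ is Lagrangian for $d(e^t\alpha')$, and $\Phi$ is clearly an embedding (it is a diffeomorphism of $\RR\times Y$, being invertible with inverse $(s,x)\mapsto(s+\log f(x),x)$), so $\Phi(L)$ is an embedded Lagrangian submanifold.

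Next I would address the cylindrical ends. On the positive end, $L\cap [N,\infty)\times Y = [N,\infty)\times\Lambda_+$, so on this end $\Phi(t,x) = (t - \log f(x), x)$; since $f$ restricted to the compact set $\Lambda_+$ is bounded, there is a constant $C$ with $t - \log f(x) \geq N - C$ for all $x\in\Lambda_+$, and in fact for $t$ large enough $\Phi$ maps $\{(t,x): t\geq N', x\in\Lambda_+\}$ onto $[N',\infty)\times\Lambda_+$ (reparametrizing the $t$-coordinate fiberwise by a diffeomorphism of a half-line). Thus $\Phi(L)\cap[N'',\infty)\times Y$ is, for $N''$ sufficiently large, the image of a cylinder over $\Lambda_+$ under a map of the form $(t,x)\mapsto (t - \log f(x), x)$, which is again cylindrical: indeed it is the graph $\{(g(x)+u, x): u\geq 0, x\in\Lambda_+\}$ with $g = -\log f|_{\Lambda_+}$, and this is precisely the orbit region of $\partial_s$ through $\{s = g(x)\}\times\Lambda_+$, hence a cylinder $[\,\cdot\,,\infty)\times\Lambda_+$. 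The same argument applies verbatim on the negative end with $\Lambda_-$.

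The one point requiring a little care — and the place where the invariance genuinely uses that the ends are \emph{infinite} — is that after applying $\Phi$ the ``starting level'' of the cylinder is no longer a constant slice $\{s=N\}\times\Lambda_+$ but the possibly non-constant slice $\{s = -\log f(x)\}\times\Lambda_+$. Because the end extends to $s = +\infty$, we may simply pass to a large enough constant level $N''$ beyond the maximum of $-\log f$ on $\Lambda_+$, and the portion of $\Phi(L)$ above that level is a genuine product $[N'',\infty)\times\Lambda_+$; symmetrically on the negative end. This is the main (and only) subtlety, and it is the step I would expect a careful referee to want spelled out. Everything else is a one-line computation.
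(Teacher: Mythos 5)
Your proposal is correct and follows essentially the same route as the paper's proof: the paper likewise notes that $\Phi$ shifts the $t$-coordinate by $-\log f$, bounds this shift (via $M_\pm = \max_Y/\min_Y(-\log f)$, the analogue of your ``pass to a level beyond the maximum of $-\log f$''), and concludes that above/below suitably enlarged constant levels $\Phi(L)$ is exactly the product cylinder over $\Lambda_\pm$. Your extra verification that $\Phi^*(e^s\alpha') = e^t\alpha$ is a harmless addition the paper simply builds into the statement.
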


\begin{proof}
Let $M_+ = \text{max}_{Y}(-\log f)$ and $M_- = \text{min}_{Y}(-\log f)$.  Then 
\[\Phi([-N,N] \times Y) \subset [-N + M_-,N + M_+] \times Y,\]
and furthermore
\begin{align*}
\Phi^{-1}\left( [N + M_+,\infty) \times \Lambda_+\right) &\subset [N,\infty) \times \Lambda_+, \\
\Phi^{-1}\left( (-\infty,-N+ M_-] \times \Lambda_-\right) &\subset (-\infty,-N] \times \Lambda_-.
\end{align*}
This implies that
\begin{align*}
    \Phi(L) \cap [N + M_+,\infty) \times Y &= [N + M_+,\infty) \times \Lambda_+, \\
    \Phi(L) \cap (-\infty,-N + M_-] \times Y &= (-\infty,-N + M_-] \times \Lambda_-,
\end{align*}
and $\Phi(L)$ is still a Lagrangian cobordism from $\Lambda_-$ to $\Lambda_+$ with cylindrical ends.
\end{proof}

Lagrangian cobordisms with cylindrical ends can be glued together.  We state but do not prove the following result, as it is similar to \autoref{prop:gluing} below.

\begin{proposition}
If $L_a$ is a Lagrangian cobordism with cylindrical ends from $\Lambda_1$ to $\Lambda_2$ and $L_b$ is a Lagrangian cobordism with cylindrical ends from $\Lambda_2$ to $\Lambda_3$, there is a Lagrangian cobordism from $\Lambda_1$ to $\Lambda_3$ that is the smooth, topological concatenation of $L_a$ with $L_b$
\end{proposition}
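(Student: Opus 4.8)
The plan is the standard ``translate, truncate, and reglue'' argument, and the one structural point worth isolating first is that along its ends a Lagrangian cobordism with cylindrical ends is \emph{literally} a product. Indeed, by the remark following \autoref{def:lagrangian-concordance}, the cylindrical-ends condition says that the Liouville vector field $\partial_t$ is tangent to $L$ on $[N,\infty)\times Y$ and on $(-\infty,-N]\times Y$; on each such half-cylinder $L$ is then a union of $\partial_t$-orbits, hence equals $[N,\infty)\times\Lambda_+$, respectively $(-\infty,-N]\times\Lambda_-$, where $\Lambda_\pm = L\cap(\{\pm N\}\times Y)$, and a short computation with $\omega = d(e^t\alpha)$ shows $\Lambda_\pm$ is Legendrian precisely because $L$ is Lagrangian. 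So near the ends there is no Weinstein-neighborhood bookkeeping to do: they are honest products, which is what makes the cut-and-paste clean. I would work in the symplectization $(\RR\times Y, d(e^t\alpha))$ (if $L_a$ and $L_b$ were originally phrased with respect to different contact forms for $\xi$, first bring both into this symplectization via \autoref{rmk:contact}), and choose $N$ large enough that both are cylindrical outside $[-N,N]\times Y$, so that $L_a\cap([N,\infty)\times Y) = [N,\infty)\times\Lambda_2$ and $L_b\cap((-\infty,-N]\times Y) = (-\infty,-N]\times\Lambda_2$.

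Next I would use the translations $\tau_s(t,x) = (t+s,x)$, noting that $\tau_s^*\, d(e^t\alpha) = e^s\, d(e^t\alpha)$, so $\tau_s$ is a conformal symplectomorphism and in particular sends Lagrangians to Lagrangians. Set $L_b' := \tau_{3N}(L_b)$, an embedded Lagrangian with cylindrical ends, now satisfying $L_b'\cap((-\infty,2N]\times Y) = (-\infty,2N]\times\Lambda_2$ and $L_b'\cap([4N,\infty)\times Y) = [4N,\infty)\times\Lambda_3$. On the overlap slab $\{N \le t \le 2N\}$ both $L_a$ and $L_b'$ are the \emph{same} product cylinder over $\Lambda_2$, so I can define
\[
L := \bigl(L_a \cap \{\, t \le \tfrac{3}{2}N \,\}\bigr)\ \cup\ \bigl(L_b' \cap \{\, t \ge \tfrac{3}{2}N \,\}\bigr),
\]
which near $t = \tfrac{3}{2}N$ coincides on both sides with the product cylinder $\{t\}\times\Lambda_2$ and is therefore a smooth embedded surface; it is Lagrangian because it is locally equal to $L_a$ or to $L_b'$; it has cylindrical ends $(-\infty,-N]\times\Lambda_1$ and $[4N,\infty)\times\Lambda_3$; and it is compact on every slab $[-M,M]\times Y$. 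Hence $L$ is a Lagrangian cobordism with cylindrical ends from $\Lambda_1$ to $\Lambda_3$. Finally, truncating the (infinite, product) $\Lambda_2$-ends of $L_a$ and of $L_b'\cong L_b$ and gluing the results along $\Lambda_2$ is, by construction, exactly the topological concatenation of the two cobordisms, so $L$ realizes that concatenation smoothly.

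I do not expect a genuine obstacle here; the only two points needing care are (i) that $\RR$-translation of a symplectization merely \emph{rescales} $\omega$ yet still preserves the class of Lagrangian submanifolds, so it is legitimate to slide $L_b$ upward until its negative $\Lambda_2$-end overlaps the positive $\Lambda_2$-end of $L_a$; and (ii) that the gluing must be performed along an overlapping \emph{interval} of the two (identical) cylindrical ends rather than along a single slice, so that the result is $C^\infty$ and not merely $C^0$ — both are handled above. The identical argument applies verbatim to the immersed case needed for \autoref{C:ImmersedCase}, since there the cylindrical ends are still required to be embedded and all the cutting and regluing takes place inside them, so no new self-intersections are introduced.
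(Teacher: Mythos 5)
Your argument is correct, and it is essentially the argument the paper intends: the paper omits a proof of this proposition, remarking only that it is similar to the proof of \autoref{prop:gluing}, where the same mechanism appears (there the filling's form is rescaled by a constant, which plays exactly the role of your $\RR$-translation $\tau_{3N}$, a conformal symplectomorphism scaling $d(e^t\alpha)$ by $e^{3N}$, and the pieces are glued along the shared cylindrical end over $\Lambda_2$). Your two points of care --- that translation preserves Lagrangians despite rescaling $\omega$, and that one glues along an overlapping product region so the result is smooth --- are precisely the content of that omitted proof, so nothing is missing.
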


\begin{definition}
\label{def:symp-filling-cap}
A {\it strong symplectic filling} of a contact structure $(Y,\xi)$ is a compact symplectic manifold $(X,\omega)$ with $\partial X = Y$, an outward-pointing Liouville vector field $\rho$ along $Y$, such that $\xi = \Ker(\omega(\rho,-)|_Y)$.  If $(X,\omega,\rho)$ is a strong filling, let $(\widehat{X}_{\rho},\widehat{\omega})$ denote the symplectic manifold obtained by adding the half-infinite collar $[0,\infty) \times Y$ to $X$ and extending $\omega$ as $d(e^t \omega(\rho,-))$ on the half-infinite cylinder.

A {\it strong symplectic cap} $(X,\omega,\rho)$ of a contact structure $(Y,\xi)$ is defined similarly to a strong filling, except the Liouville vector field is inward-pointing along $\partial X = Y$.  Let $(\widehat{X},\widehat{\omega})$ denote the symplectic manifold obtained by adding the half-infinite cylinder $(-\infty,0] \times Y$ to $X$ and extending $\omega$ as $d(e^t \omega(\rho,-))$ on the half-infinite cylinder.
\end{definition}

\begin{definition}
\label{def:lagrangian-filling-cap}
Let $(Y,\xi)$ be a contact manifold and $L$ a Legendrian link in $(Y,\xi)$.  A {\it Lagrangian filling} of $L$ in a strong symplectic filling $(X,\omega,\rho)$ of $(Y,\xi)$ is a Lagrangian submanifold $\Lambda \subset (\widehat{X}_{\rho},\widehat{\omega})$ such that
\[ \Lambda \cap [N,\infty) \times Y = [N,\infty) \times L\]
for $N$ sufficiently large.

A {\it Lagrangian cap} in a strong symplectic cap is a Lagrangain submanifold $\Lambda \subset (\widehat{X}_{\rho},\widehat{\omega})$ such that
\[\Lambda \cap (-\infty,-N] \times Y = (-\infty,-N] \times L\]
for $N$ sufficiently large.

An {\it immersed Lagrangian filling/cap} is defined similarly, as an immersed Lagrangian surface but with embedded cylindrical ends.
\end{definition}

\begin{remark}
As in \autoref{def:lagrangian-concordance}, the definition of cylindrical ends depends on the choice of contact form $\alpha$ and associated Liouville vector field.  But as in \autoref{rmk:contact}, the {\it existence} of cylindrical ends is independent of the contact form.
\end{remark}

The key gluing result is that a Lagrangian filling and a Lagrangian cap can be glued together to obtain a closed Lagrangian surface in a closed symplectic $4$-manifold.

\begin{proposition}[Gluing]
\label{prop:gluing}
Let $(Y,\xi)$ be a contact $3$-manifold and $\Lambda \subset (Y,\xi)$ a Legendrian link.  Suppose that
\begin{enumerate}
    \item $(X_f,\omega_f)$ is a strong symplectic filling of $(Y,\xi)$ and $L_f$ is a Lagrangian filling of $\Lambda$ in $(X_f,\omega_f)$, and
    \item $(X_c,\omega_c)$ is a strong symplectic cap of $(Y,\xi)$ and $L_c$ is a Lagrangian cap of $\Lambda$ in $(X_c,\omega_c)$.
\end{enumerate}
Then, after possibly rescaling $\omega_f$,
\begin{enumerate}
    \item the symplectic cap and filling can be glued to obtain a closed, symplectic $4$-manifold $(X,\omega)$, and
    \item the Lagrangian cap and filling can be glued to obtain a closed, Lagrangian surface $L \subset (X,\omega)$.
\end{enumerate}
\end{proposition}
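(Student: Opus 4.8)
The plan is to push everything into symplectizations, perform the gluing there, and read off both conclusions simultaneously. Fix a contact form $\alpha$ for $(Y,\xi)$. Let $\alpha_f = \omega_f(\rho_f,-)|_Y$ and $\alpha_c = \omega_c(\rho_c,-)|_Y$ be the contact forms induced on $Y$ by the filling and the cap; since both define $\xi$ we may write $\alpha_c = h\,\alpha_f$ for a positive function $h$ on $Y$, and after relabeling we take $\alpha := \alpha_c$ as the common form. First I would normalize the two completions. Applying the symplectomorphism of \autoref{rmk:contact} in its evident filling/cap version, namely $\Phi(t,x)=(t-\log h(x),x)$ on the cylindrical end of $\widehat{X_f}$, converts its model form $d(e^t\alpha_f)$ into $d(e^t\alpha)$; composing further with a translation $t\mapsto t-K$ (for $K$ large) and absorbing the resulting conformal factor $e^{-K}$ into a single global rescaling of $\omega_f$ — this is exactly the ``after possibly rescaling $\omega_f$'' in the hypothesis — I can arrange that the positive cylindrical end of $\widehat{X_f}$ is identified with $[T_0,\infty)\times Y$ carrying $d(e^t\alpha)$, with $T_0$ as negative as desired. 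Since the \emph{existence} of cylindrical ends is independent of the contact form (the remark after \autoref{def:lagrangian-filling-cap}), $L_f$ remains cylindrical after $\Phi$, say $L_f$ agrees with $[N_f,\infty)\times\Lambda$ on the end, and by taking $K$ large we push $N_f$ arbitrarily negative as well. The cap completion $\widehat{X_c}$ already has negative cylindrical end $(-\infty,0]\times Y$ with $d(e^t\alpha)$, on which $L_c$ agrees with $(-\infty,-M_c]\times\Lambda$. Choosing $a<b$ with $\max(N_f,T_0)\le a<b\le -M_c$ — possible because $N_f$ and $T_0$ were pushed below everything — we obtain a window $(a,b)\times Y$ that sits inside the standard part of both completions and on which both Lagrangians are the product $(a,b)\times\Lambda$.

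Now I would \emph{define} the glued manifold by
\[
X \;=\; \bigl(\widehat{X_f}\setminus((b,\infty)\times Y)\bigr)\ \cup\ \bigl(\widehat{X_c}\setminus((-\infty,a)\times Y)\bigr),
\]
identifying the two copies of $(a,b)\times Y$ by the identity in the coordinates above. On this overlap both symplectic forms are literally $d(e^t\alpha)$, so they patch to a single closed $2$-form $\omega$ on $X$; since $\widehat{X_f}\setminus(b,\infty)\times Y$ and $\widehat{X_c}\setminus(-\infty,a)\times Y$ are compact with disjoint-from-overlap interiors, $\omega$ is symplectic and $X$ is a closed symplectic $4$-manifold — topologically $X=X_f\cup([T_0,T_1]\times Y)\cup X_c$ for suitable $T_0<a<b<T_1$. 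This gives conclusion (1).

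For conclusion (2): on the overlap window we have $L_f\cap((a,b)\times Y) = (a,b)\times\Lambda = L_c\cap((a,b)\times Y)$ as \emph{the same} embedded subset, so $L_f$ and $L_c$ glue to a subset $L\subset X$. It is a smooth closed surface: smoothness and the embedding property hold because the two pieces literally coincide on a collar, and compactness holds because $L_f\cap(\widehat{X_f}\setminus(b,\infty)\times Y)$ and $L_c\cap(\widehat{X_c}\setminus(-\infty,a)\times Y)$ are compact (the only ``ends'' of the original Lagrangians were the cylindrical ones, now glued up). Finally $L$ is Lagrangian because $\omega$ restricts to zero on each piece and the gluing identification is a symplectomorphism; note the Lagrangian condition is insensitive to the rescaling of $\omega_f$.

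The main obstacle — and essentially the only one — is the normalization step: arranging that the two completions become \emph{honestly} standard, of the form $d(e^t\alpha)$ for one and the same $\alpha$, on a common collar $(a,b)\times Y$, while keeping the Lagrangians cylindrical there. The map of \autoref{rmk:contact} removes the mismatch $\alpha_c=h\alpha_f$ at the cost of a reparametrization of the cylinder coordinate, a translation then positions the two standard ends to overlap, and the accompanying constant rescaling of $\omega_f$ turns that translation into a genuine symplectomorphism; one must also check that these adjustments can be made large enough that the cylindrical portions of $L_f$ and $L_c$ survive into the overlap, which is why we are free to send $T_0$ and $N_f$ to $-\infty$. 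Once this is in place, the patching of forms, the compactness of $X$ and of $L$, and the Lagrangian condition are all immediate from the product structure on the overlap. I would also remark that the identical argument yields the immersed analogue used in \autoref{C:ImmersedCase}, since immersed Lagrangian caps and fillings have embedded cylindrical ends.
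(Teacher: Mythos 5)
Your proposal is correct and follows essentially the same route as the paper's proof: you use \autoref{rmk:contact} to reconcile the two induced contact forms, absorb a cylinder translation into the constant rescaling of $\omega_f$ so that the filling's standard end overlaps the cap's negative end, and then glue the truncated completions (and the cylindrical Lagrangian ends over $\Lambda$) along a collar where both forms are literally $d(e^t\alpha)$. The only cosmetic difference is that you normalize both ends first and glue by the identity, whereas the paper keeps the symplectic embedding of a slab $[N_f,2N_f]\times Y$ into $(-\infty,-N_c]\times Y$ explicit.
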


\begin{proof}
Let $\rho_f$ be the outward-pointing Liouville vector field for $(X_f,\omega_f)$ and $\rho_c$ the inward-pointing Liouville vector field for $(X_c,\omega_c)$.  Given a contactomorphism $\Phi: \partial X_f \rightarrow \partial X_c$ sending $\Lambda$ to $\Lambda$, we can identify the induced contact forms, up to multiplying by a positive function:
\[\alpha_f = \omega_f(\rho_f,-) = g \omega_c(\rho_c,-) = g \Phi^*(\alpha_c).\]
Let $N_f,N_c$ be sufficiently large constants guaranteeing cylindrical ends for $L_f,L_c$, respectively.  Then, as in \autoref{rmk:contact} and after possibly scaling $\omega_f$ by some small constant $C$, we can extend $\Phi$ to a symplectic embedding
\[ \Phi: [N_f,2N_f] \times Y \subset (\widehat{X}_f,C\widehat{\omega}_f) \hookrightarrow (-\infty,-N_c] \times Y \subset (\widehat{X}_c,\widehat{\omega}_c).\]
This identifies the Lagrangian cylinders over $\Lambda$ as well.  The closed symplectic manifold $(X,\omega)$ can be constructed as the union of the sublevel set of $\left\{\frac{3}{2}N_f\right\} \times Y$ in $(\widehat{X}_f,C\widehat{\omega}_f)$ and the superlevel set of $\Phi\left(\left\{\frac{3}{2}N_f\right\} \times Y \right)$ in $(\widehat{X}_c,\widehat{\omega}_c)$.
\end{proof}

\subsection{Filling the unlink}
\begin{proposition}
\label{prop:unlink-filling}
Let $\mathcal{U}_n$ be an $n$-component Legendrian unlink in $(S^3,\xi_{std})$ such that every component has $tb = -1$.  Let $W = (B^4,\omega,\rho)$ be a Liouville filling of $(S^3,\xi_{std})$.  Then $\mathcal{U}_n$ has a Lagrangian filling by disjoint Lagrangian disks in $W$.
\end{proposition}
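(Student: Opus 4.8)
\emph{Plan.} The plan is to reduce the statement to a \emph{split} model and then transport fillings along Legendrian isotopies. Concretely I would establish three things: (i) the property ``$\Lambda$ bounds disjoint Lagrangian disks in $W$'' depends only on the Legendrian isotopy type of $\Lambda$ and not on the chosen Liouville filling $W\cong B^4$; (ii) every $n$--component Legendrian unlink with all components of $tb=-1$ is Legendrian isotopic to the split model $\mathcal{U}_n^{\mathrm{split}}$, namely $n$ standard Legendrian unknots sitting in $n$ disjoint Darboux balls of $S^3$; and (iii) $\mathcal{U}_n^{\mathrm{split}}$ bounds disjoint Lagrangian disks. For (i): by uniqueness of Liouville fillings of $(S^3,\xi_{std})$ (Gromov--Eliashberg) we may take $W$ to be the standard $B^4$; and if $\Lambda$ and $\Lambda'$ are Legendrian isotopic, the trace of the isotopy is a Lagrangian cobordism with cylindrical ends from $\Lambda$ up to $\Lambda'$ in the symplectization $(\RR\times S^3, d(e^t\alpha))$, which for an isotopy of a link is a disjoint union of cylinders, one over each component. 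Gluing this cobordism onto the cylindrical end of a Lagrangian filling of $\Lambda$ in $\widehat{W}_\rho$ (as in \autoref{rmk:contact} and \autoref{prop:gluing}) produces a Lagrangian filling of $\Lambda'$ in a symplectic manifold still symplectomorphic to $\widehat{B^4}$, and it consists of disks whenever the original did.

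\emph{Step (ii), the reduction.} I would induct on $n$. The case $n=1$ is the Eliashberg--Fraser classification of Legendrian unknots: a $tb=-1$ unknot is Legendrian isotopic to the standard one, and it automatically has rotation number $0$ by the Bennequin bound (so its Lagrangian disk will be embedded). For the inductive step, pick a component $\Lambda_1$. Since the underlying link is the smooth unlink, $\Lambda_1$ bounds an embedded $2$--disk in $S^3$ disjoint from the other components, so $\Lambda_2\cup\dots\cup\Lambda_n$ lies in a smoothly embedded $3$--ball $B\subset S^3\setminus\Lambda_1$. In the tight contact manifold $(S^3,\xi_{std})$ every smoothly embedded $3$--ball is isotopic, through an ambient contact isotopy that can be arranged to be supported away from $\Lambda_1$, to an arbitrarily small Darboux ball $B'$: make $\partial B$ convex, observe that tightness forces its dividing set to be a single circle, so $B$ carries the unique tight contact structure on $D^3$ (Eliashberg) and can be standardized, then shrunk and slid off $\Lambda_1$. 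This isotopy carries $\Lambda_2\cup\dots\cup\Lambda_n$ into $B'$ while fixing $\Lambda_1$; inside $B'\cong$ a ball in $(\RR^3,\xi_{std})$ these $n-1$ components still form a maximal Legendrian unlink, so by induction they can be isotoped into $n-1$ disjoint Darboux balls, which we may take to lie inside $B'$ and hence disjoint from $\Lambda_1$. A final application of Eliashberg--Fraser standardizes $\Lambda_1$, giving $\mathcal{U}_n^{\mathrm{split}}$.

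\emph{Step (iii), and the main obstacle.} The standard Legendrian unknot bounds a Lagrangian disk contained in an arbitrarily small Darboux ball of $B^4$ meeting $S^3$ in a Darboux ball: the global model is the disk $\RR^2\cap B^4$ with $\RR^2=\{\operatorname{Im}z_1=\operatorname{Im}z_2=0\}\subset\CC^2$, and a Weinstein neighborhood of it (or, equivalently, the elementary ``minimum'' Lagrangian cobordisms of Ekholm--Honda--K\'alm\'an that introduce a standard Legendrian unknot in an isolated ball) supplies the local version. Placing one such local filling in each of the $n$ pairwise disjoint Darboux balls carrying the components of $\mathcal{U}_n^{\mathrm{split}}$ yields the required disjoint Lagrangian disks, and combining with (i) proves the proposition. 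The substantive step is (ii): steps (i) and (iii) are formal or rest on a well-known local model, whereas the reduction to the split unlink, while using only the Eliashberg--Fraser classification and the standardness of embedded $3$--balls in tight contact manifolds, requires care in peeling off one component at a time into ever-smaller disjoint Darboux balls without disturbing the others. If a citable classification of maximal Thurston--Bennequin Legendrian unlinks is available, step (ii) can be replaced by that citation.
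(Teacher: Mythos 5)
Your proposal is correct in substance and shares the paper's skeleton (fill a standard split unlink by disjoint Lagrangian disks, identify it with $\mathcal{U}_n$ via the classification of maximal Legendrian unlinks, turn the Legendrian isotopy into a Lagrangian concordance and glue), but it differs in two supporting steps, and in both places the paper's route is lighter. First, to handle an arbitrary Liouville filling $W=(B^4,\omega,\rho)$, the paper does not invoke any uniqueness-of-fillings theorem: it constructs the entire disk filling (and the Chantraine concordance) inside the negative symplectization $((-\infty,0]\times S^3, d(e^t\alpha))$ and then observes that flowing inward along $\rho$ from $\partial W$ gives a symplectic embedding of that negative end into $W$, so the filling is transported into $W$ for free. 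Your step (i) via Gromov--Eliashberg uniqueness can be made to work, but it is heavier and needs the precise statement that the \emph{completions} of Liouville fillings of $(S^3,\xi_{std})$ are symplectomorphic (mere deformation equivalence would not let you transport the Lagrangian disks), so the collar-embedding trick is worth adopting. Second, the paper simply cites Eliashberg--Fraser for the classification of Legendrian unlinks rather than re-deriving it; your inductive peeling argument is the standard convex-surface-theory proof, but its delicate point is exactly the one you flag -- arranging the ambient contact isotopy that standardizes and shrinks the ball containing $\Lambda_2\cup\dots\cup\Lambda_n$ to be supported away from $\Lambda_1$ at all times -- and this is precisely what the cited classification packages for you. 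Your step (iii) (flat disk $\RR^2\cap B^4$, or the Ekholm--Honda--K\'alm\'an minimum cobordisms in disjoint Darboux balls) is the same content as the paper's citation of Theorem~2 of the survey \cite{BLLLMPPST} (attributed to \cite{BST,EHK}), just spelled out by hand; similarly your use of the trace of the isotopy is exactly the appeal to \cite{Chantraine}. So: correct modulo standard citations, with the main practical improvements being to replace the uniqueness-of-fillings appeal by the Liouville-flow embedding of the negative symplectization, and to cite the unlink classification rather than prove it.
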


\begin{proof}
Let $\Lambda \subset (S^3,\xi_{std} = \ker \alpha)$ be a standard Legendrian unlink $\Lambda$ of $n$ Legendrian unknots with $\tb=-1$, meaning that the front projection of $\Lambda$ consists of $n$ components in disjoint disks, each with no crossings and exactly two cusps.

See \cite{BLLLMPPST} for a survey of standard constructions of Lagrangian cobordisms. In particular, Theorem~2 in \cite{BLLLMPPST}, attributed to \cite{BST} and \cite{EHK}, implies that $\Lambda$ has a filling by disjoint Lagrangian disks in the negative symplectization $((-\infty,0] \times S^3, d(e^t \alpha))$. By the classification of Legendrian unlinks \cite{Eliashberg-Fraser}, this $\Lambda$ is Legendrian isotopic to $\mathcal{U}_n$. It follows from \cite{Chantraine} that there is a Lagrangian concordance from $\mathcal{U}_n$ to $\Lambda$, which can be glued to our filling of $\Lambda$ to get a filling of $\mathcal{U}_n$ in $((-\infty,0] \times S^3, d(e^t \alpha))$.

Now note that flowing inward along the Liouville vector field $\rho$ from the boundary of $(B^4,\omega,\rho)$ gives a symplectic embedding of $((-\infty,0] \times S^3, d(e^t \alpha))$ into $(B^4,\omega,\rho)$, and thus gives a Lagrangian filling of $\mathcal{U}_n$ in $(B^4,\omega,\rho)$.
\end{proof}

\subsection{Filling general links by immersed Lagrangian surfaces}

We next generalize to the case of arbitrary Legendrian links.  Here we will explain that the $h$-principle for formal Legendrian embeddings implies that a Legendrian link admits a filling by immersed Lagrangian disks if and only if each component has vanishing rotation number.

The contact structure $(S^3,\xi_{std})$ admits a global, nonvanishing section $\tau$ since the Euler class of $\xi_{std}$ vanishes.  In addition, since $H^1(S^3;\ZZ) = 0$, this section is unique up to homotopy.  Let $\Lambda$ be an immersed Legendrian curve, which we view as a map
\[\Lambda: S^1 \rightarrow S^3.\]
Consider the pullback bundle $\Lambda^*(\xi)$ over $S^1$.  Let $v$ be a nonvanishing vector field along $S^1$ that points in the positive direction.  Since $\Lambda$ is Legendrian, the image $d\Lambda(v)$ is a nonvanishing section of $\xi$, which pulls back to give a nonvanishing section of $\Lambda^*(\xi)$, which by abuse of notation we also denote by $v$.  In addition, the global section $\tau$ pulls back to give a section of $\Lambda^*(\xi)$.  The obstruction class $d^1(v,\tau)$ to homotoping $v$ to $\tau$ through nonvanishing sections is an element of $H^1(S^1,\ZZ) \cong \ZZ$.  This integer is called the {\it rotation number} of the Legendrian immersion $\Lambda$ and is denoted $\text{rot}(\Lambda)$.

The $h$-principle for formal Legendrian immersions (see \cite{Eliashberg-Mishachev}) implies that the rotation number is a complete invariant up to Legendrian homotopy.

\begin{proposition}
\label{prop:h-principle-legendrian}
Let $\Lambda_0,\Lambda_1$ be two $n$-component Legendrian links.  There exists a family $\Lambda_t$ of immersed Legendrian links for $t \in [0,1]$ connecting $\Lambda_0$ to $\Lambda_1$ if and only if for all $i = 1,\dots,n$
\[\text{rot}(\Lambda_{0,i}) = \text{rot}(\Lambda_{1,i})\]
where $\Lambda_{k,i}$ denotes the $i^{\text{th}}$-component of the link $\Lambda_k$.
\end{proposition}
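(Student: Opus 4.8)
The statement is a formal consequence of the $h$-principle for Legendrian immersions, so the plan is to dispose of the easy ``only if'' direction by hand and then reduce the ``if'' direction to a computation of path components of a space of formal objects.

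\emph{The ``only if'' direction.} Suppose $\Lambda_t$, $t\in[0,1]$, is a path of immersed Legendrian links from $\Lambda_0$ to $\Lambda_1$. Fix a component index $i$. For each $t$ the positive velocity direction of $\Lambda_{t,i}$ is a nonvanishing section $v_t$ of $\Lambda_{t,i}^*(\xi)$, and $\tau$ pulls back to another nonvanishing section of the same bundle; the obstruction class $d^1(v_t,\tau)\in H^1(S^1;\ZZ)\cong\ZZ$ that computes $\text{rot}(\Lambda_{t,i})$ depends continuously on $t$ while taking values in a discrete set, hence is constant. Thus $\text{rot}(\Lambda_{0,i})=\text{rot}(\Lambda_{1,i})$ for all $i$. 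The same argument shows that $\text{rot}$ is invariant under \emph{formal} Legendrian homotopy, which we use below.

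\emph{The ``if'' direction.} Recall that a formal Legendrian immersion of a closed $1$--manifold $\Lambda$ consists of a smooth immersion $f\colon\Lambda\to S^3$ together with a homotopy $\Phi_s\colon T\Lambda\to TS^3$ of injective bundle maps covering $f$, with $\Phi_0=df$ and $\Phi_1(T\Lambda)\subseteq\xi$. The $h$-principle for Legendrian immersions (see \cite{Eliashberg-Mishachev}) states that the inclusion of the space of genuine Legendrian immersions into the space of formal Legendrian immersions is a weak homotopy equivalence, in particular a bijection on $\pi_0$; so it suffices to connect $\Lambda_0$ and $\Lambda_1$ through formal Legendrian immersions. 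First, since $S^3$ is parallelizable its unit tangent bundle $UTS^3\cong S^3\times S^2$ is simply connected, so by the Hirsch--Smale immersion theorem (also \cite{Eliashberg-Mishachev}) every immersion $S^1\to S^3$ is regularly homotopic to a fixed small round circle, and any such regular homotopy is covered by a homotopy of formal Legendrian immersions (carry the formal lift along). Handling one component at a time in its own small Darboux ball -- distinct components may pass through one another freely, as we are working with immersions -- we may thus assume that $\Lambda_0$ and $\Lambda_1$ have the same underlying immersion $f=\bigsqcup_i C_i$, a disjoint union of standard round circles. Using the global trivialization of $\xi$ by $\tau$, the remaining formal data on each $C_i$ is, up to homotopy, pinned down by the homotopy class of the nonvanishing section $\Phi_1$ of $\xi|_{C_i}$; measured against $\tau$ this class is an element of $H^1(S^1;\ZZ)\cong\ZZ$, namely the rotation number of that component, and a short obstruction-theory computation over $S^1$ shows that the auxiliary path $\Phi_s$ from $df$ to $\Phi_1$ contributes nothing further to $\pi_0$. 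Hence, when the rotation numbers agree componentwise, $\Lambda_0$ and $\Lambda_1$ are homotopic as formal Legendrian immersions, and by the $h$-principle they are joined by a path of genuine immersed Legendrian links.

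\emph{Main obstacle.} The technical heart is the last $\pi_0$ computation: once the underlying immersion is standardized, one must verify that the homotopy type of the space of formal Legendrian lifts of $S^1$ in $(S^3,\xi)$ is detected exactly by the rotation number, i.e. that the auxiliary homotopy $\Phi_s$ records nothing new at the level of path components. This requires a careful reading of the definition of a formal Legendrian immersion together with obstruction theory over $S^1$ relative to the framing $\tau$; by contrast the remainder of the argument is bookkeeping, layering the Hirsch--Smale and Eliashberg--Mishachev $h$-principles on top of $\pi_1(S^3)=0$.
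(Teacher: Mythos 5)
Your overall route is the same as the paper's: the paper offers no written proof at all, simply asserting that the Eliashberg--Mishachev $h$-principle for Legendrian immersions gives the statement, so the value of your write-up rests entirely on the details -- and there is a genuine problem in exactly the step you flag as the technical heart. You define a formal Legendrian immersion as an honest immersion $f$ together with a path $\Phi_s$ of bundle monomorphisms from $df$ to a monomorphism into $\xi$. That is the formal package appropriate to Legendrian \emph{embeddings}, not immersions, and with it your key claim -- that after standardizing the underlying immersion the path $\Phi_s$ ``contributes nothing further to $\pi_0$'' -- is false when $f$ is held fixed. Over a fixed $f$, the space of lifts is the space of paths in $\mathrm{Mono}(TS^1,f^*TS^3)\simeq \mathrm{Maps}(S^1,S^2)$ from $df$ to the subspace $\mathrm{Mono}(TS^1,f^*\xi)\simeq \mathrm{Maps}(S^1,S^1)$, and the exact sequence of this pair shows that its $\pi_0$ surjects onto $\pi_0\,\mathrm{Maps}(S^1,S^1)\cong\ZZ$ (the rotation number) with fibers identified with $\mathrm{coker}\bigl(\pi_1\,\mathrm{Maps}(S^1,S^1)\to\pi_1\,\mathrm{Maps}(S^1,S^2)\bigr)\cong\pi_2(S^2)\cong\ZZ$, since the rotation loop of the equatorial mapping space becomes null-homotopic in $\mathrm{Maps}(S^1,S^2)$. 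That extra $\ZZ$ is precisely the formal framing/Thurston--Bennequin datum carried by the path; it is the reason formal Legendrian knots are classified by $(\tb,\mathrm{rot})$ rather than by $\mathrm{rot}$ alone. It dies only because the connecting family is allowed to move the underlying immersion, so that the monodromy action of $\pi_1(\mathrm{Imm}(S^1,S^3))\cong\ZZ$ (coming from $\pi_2(S^2)$ via Hirsch--Smale) can absorb it -- an argument about loops of immersions that you would have to supply, and which is not an obstruction-theory computation over $S^1$ with $f$ fixed. Relatedly, the $h$-principle you quote is not stated in the literature for your path-style formal space, so even the weak-equivalence input needs justification in your setup.

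The clean repair is to use the formal space for which the Legendrian (isotropic) immersion $h$-principle is actually stated in Eliashberg--Mishachev: pairs $(f,F)$ where $f\colon S^1\to S^3$ is an arbitrary smooth map (not required to be an immersion) and $F\colon TS^1\to\xi$ is a fiberwise injective bundle map covering $f$ -- no path, no comparison with $df$. With that formal space the $\pi_0$ computation really is immediate: since $\pi_1(S^3)=0$ and $\xi$ is trivialized by $\tau$, each component of the formal space is homotopy equivalent to $\mathrm{Maps}(S^1,S^3)\times\mathrm{Maps}(S^1,S^1)$, whose set of path components is $\ZZ$, detected by the rotation number; the componentwise statement for links follows at once because immersed components impose no mutual constraint. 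In particular neither the Hirsch--Smale standardization nor any analysis of the auxiliary path is needed. Your ``only if'' direction is fine as written.
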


\begin{lemma}
\label{lemma:legendrian-homotopy-trace}
If there exists a Legendrian homotopy from $\Lambda$ to $\Lambda'$, then there exists an immersed Lagrangian concordance of from $\Lambda$ to $\Lambda'$.

Moreover, if the Legendrian homotopy from $\Lambda$ to $\Lambda'$ consists of $p$ positive crossing changes and $n$ negative crossing changes, the Lagrangian concordance has $p$ positive and $n$ negative transverse self-intersections.
\end{lemma}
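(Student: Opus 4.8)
The plan is to put the Legendrian homotopy into generic position, cut it into elementary pieces, realize each piece by a Lagrangian concordance (embedded, or with a single double point), and concatenate the pieces in the symplectization. First I would argue that, after a $C^\infty$-small perturbation rel endpoints, the path $t\mapsto\Lambda_t$ in the space of Legendrian immersions $S^1\sqcup\cdots\sqcup S^1\to(Y,\xi)$ meets the locus of non-embedded Legendrians only at finitely many times $0<t_1<\cdots<t_N<1$, transversally, and that at each $t_i$ the immersion $\Lambda_{t_i}$ has exactly one transverse double point. This is a routine transversality statement; it exhibits the homotopy as a concatenation $\Theta_0*\kappa_1*\Theta_1*\cdots*\kappa_N*\Theta_N$ in which each $\Theta_j$ is a genuine Legendrian isotopy and each $\kappa_i$ is an \emph{elementary crossing change}, supported inside a Darboux ball $B_i\subset Y$ in which one Legendrian strand is pushed through another. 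Each $\kappa_i$ is a positive or a negative crossing change; write $p$ and $n$ for the number of each, so that $p+n=N$.

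Next I would realize the pieces. The trace of a Legendrian isotopy is an embedded Lagrangian concordance with cylindrical ends in the symplectization $(\RR\times Y,d(e^t\alpha))$ — this is the standard trace construction for Legendrian isotopies of \cite{Chantraine}, already invoked in the proof of \autoref{prop:unlink-filling} — so each $\Theta_j$ contributes such a concordance. For an elementary crossing change $\kappa_i$ I would write down an explicit model in $\RR_\tau\times\ball{3}$ with the symplectic form $d(e^\tau(dz-y\,dx))$, namely the union $L_i$ of the two sheets
\[
\{(\tau,x,0,0):\tau\in\RR,\ |x|\le 1\}
\quad\text{and}\quad
\{(\tau,0,y,c(\tau)):\tau\in\RR,\ |y|\le 1\},
\]
where $c\colon\RR\to\RR$ is monotone, locally constant near $\pm\infty$, and crosses $0$ exactly once with $c'(0)\ne 0$. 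A one-line computation shows that each sheet is Lagrangian (on the first, $z$ and $y$ vanish identically, so $e^\tau(dz-y\,dx)$ pulls back to $0$; on the second, $x$ vanishes identically, so it pulls back to the closed $1$-form $e^\tau c'(\tau)\,d\tau$); that the two sheets meet transversally in the single point $\tau=x=y=z=0$; and that, for a consistent choice of orientation conventions and of the sign of $c'$, that transverse double point is positive precisely when $\kappa_i$ is a positive crossing change. Since the strands are $\tau$-independent away from the core of $B_i$, the model $L_i$ agrees there with the cylinder over the ambient link, so gluing on the product cylinder over the part of the link outside $B_i$ yields an immersed Lagrangian concordance with cylindrical ends, from the Legendrian just before $\kappa_i$ to the one just after, carrying exactly one transverse double point of the corresponding sign.

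Finally I would concatenate all of these concordances — translating in the $\RR$-direction and gluing along the matching cylindrical ends, exactly as in the concatenation of Lagrangian cobordisms with cylindrical ends stated above, whose proof is unchanged in the immersed setting because every double point lies in a compact part of the interior. The result is an immersed Lagrangian concordance from $\Lambda$ to $\Lambda'$ that is embedded over each $\Theta_j$ and has one double point of the appropriate sign over each $\kappa_i$, hence $p$ positive and $n$ negative transverse self-intersections in total. I expect the genuinely delicate point to be the crossing-change model of the previous step: arranging the moving strand so that its $z$-coordinate has returned to a fixed value before it leaves $B_i$ — while it remains Legendrian and the swept sheet stays Lagrangian — so that the ends really are cylindrical and the model glues smoothly to the ambient cylinder, together with pinning down the orientation conventions so that the sign bookkeeping comes out as claimed. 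The other two ingredients (the genericity decomposition and the isotopy trace) are, respectively, routine and a citation.
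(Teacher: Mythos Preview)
Your proposal is correct and takes essentially the same approach as the paper: decompose the homotopy into Legendrian isotopies (handled via \cite{Chantraine}) and elementary crossing changes, realize each crossing change by an explicit two-sheet local model in Darboux coordinates on the symplectization, and concatenate. Your local model is, up to swapping the roles of the two strands, exactly the paper's model $L_1=(a,0,b,0)$, $L_2=(a,b,0,\phi_\epsilon(a))$ with $\phi_\epsilon$ a smooth step function; your discussion of genericity, cylindrical-end matching, and sign bookkeeping is more detailed than the paper's but not substantively different.
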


\begin{proof}
We can decompose a Legendrian homotopy $L_t$ into a sequence of isotopies and crossing changes.  Each of these correspond to a Lagrangian cobordism with cylindrical ends, which can be concatenated to produce the immersed Lagrangian filling.  The case of a Legendrian isotopy is covered by \cite{Chantraine} and we describe the case of the crossing change by the following local model.  

Choose Darboux coordinates $(s,x,y,z)$ such that
\[\alpha = dz - y dx \qquad d(e^s \alpha) = e^s \left( ds \wedge (dz - ydx) + dx \wedge dy \right).\]
Consider the following surfaces, parameterized by $(a,b) \in \RR^2$:
\begin{align*}
    L_1 &= (a,0,b,0)  &
    L_2 &= (a,b,0,\phi_{\epsilon}(a))
\end{align*}
where $\phi_{\epsilon}$ is a smooth step function that equals $-\epsilon$ for $a \leq -\epsilon$ and equals $\epsilon$ for $a \geq \epsilon$.  These surfaces can be immediately checked to be Lagrangian with respect to $\omega = d(e^s(\alpha))$ and have cylindrical ends with respect to the Liouville vector field $\partial_s$. Furthermore, the intersection of $L_1 \cup L_2$ as $a$ varies from $-\epsilon$ to $\epsilon$ traces a Legendrian homotopy through a crossing change.  Replacing $\phi_{\epsilon}$ by $-\phi_{\epsilon}$ changes the sign of the crossing change.  
\end{proof}

\begin{proposition}
\label{prop:rot-0-immersed-filling}
Let $\Lambda$ be an $n$-component Legendrian link in $(S^3,\xi_{std})$ such that every component has $\text{rot} = 0$.  Let $W = (B^4,\omega,\rho)$ be a Liouville filling of $(S^3,\omega_{std})$.  Then $\Lambda$ has a Lagrangian filling by immersed Lagrangian disks in $W$.
\end{proposition}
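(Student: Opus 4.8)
The plan is to reduce to the maximal Legendrian unlink case already handled in \autoref{prop:unlink-filling}, by first producing an immersed Lagrangian concordance from $\Lambda$ to $\mathcal{U}_n$ and then concatenating it with the embedded disk filling of $\mathcal{U}_n$ inside $W$.

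First I would record that every component of the standard maximal Legendrian unlink $\mathcal{U}_n$ (the $n$--component unlink of Legendrian unknots with $\tb = -1$) has rotation number $0$: the standard front of a $\tb=-1$ unknot has exactly two cusps, contributing with opposite signs, so $\text{rot} = 0$. Since by hypothesis every component of $\Lambda$ also has $\text{rot} = 0$, \autoref{prop:h-principle-legendrian} applies and produces a family $\Lambda_t$, $t \in [0,1]$, of immersed Legendrian $n$--component links with $\Lambda_0 = \Lambda$ and $\Lambda_1 = \mathcal{U}_n$, i.e.\ a Legendrian homotopy. Feeding this homotopy into \autoref{lemma:legendrian-homotopy-trace} yields an immersed Lagrangian cobordism $L_0$ with cylindrical (embedded) ends from $\Lambda$ to $\mathcal{U}_n$ in the negative symplectization $((-\infty,0] \times S^3, d(e^t\alpha))$, whose only singularities are finitely many transverse double points; topologically $L_0$ is a disjoint union of $n$ cylinders $S^1 \times [0,1]$.

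Next I would invoke \autoref{prop:unlink-filling} to obtain an embedded Lagrangian filling $L_1$ of $\mathcal{U}_n$ in $W = (B^4,\omega,\rho)$ by $n$ disjoint Lagrangian disks, realized (as in that proof) by flowing the negative symplectization into $(B^4,\omega,\rho)$ along $\rho$. Then, exactly as in the concatenation of Lagrangian cobordisms with cylindrical ends (the proposition stated after \autoref{rmk:contact}, whose proof mirrors that of \autoref{prop:gluing}), I would glue $L_0$ to $L_1$ along their common cylindrical end over $\mathcal{U}_n$, after any rescaling of the symplectic form needed to match collars; by \autoref{rmk:contact} the cylindrical-ends condition at the surviving end over $\Lambda$ is preserved. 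Since attaching a cylinder $S^1 \times [0,1]$ to a disk $D^2$ along $S^1 \times \{1\} = \partial D^2$ produces a disk, the resulting Lagrangian is an immersed Lagrangian filling of $\Lambda$ in $W$ consisting of $n$ immersed disks.

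The main point to check carefully is that the concatenation/gluing operation, which is stated in the excerpt for embedded Lagrangian cobordisms and fillings, goes through verbatim when one of the pieces is only immersed: the gluing is performed in a neighborhood of the \emph{embedded} cylindrical end over $\mathcal{U}_n$, which is disjoint from the self-intersections of $L_0$, so no double points are disturbed and no new singularities are created. Everything else is a routine bookkeeping of the collar identifications and rescaling constants, exactly as in the embedded setting. I would therefore expect this to be the only genuinely substantive step, and a short one.
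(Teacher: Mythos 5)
Your proposal is correct and follows essentially the same route as the paper: apply the $h$-principle (\autoref{prop:h-principle-legendrian}) to homotope $\Lambda$ to the maximal Legendrian unlink $\mathcal{U}_n$ (whose components have $\mathrm{rot}=0$), realize the trace of this homotopy as an immersed Lagrangian cylinder via \autoref{lemma:legendrian-homotopy-trace}, and glue it to the embedded disk filling of $\mathcal{U}_n$ from \autoref{prop:unlink-filling}. Your added checks (rotation number of the unlink components, gluing only along the embedded cylindrical end) are details the paper leaves implicit, not a different argument.
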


\begin{proof}
By \autoref{prop:h-principle-legendrian}, the link $\Lambda$ is Legendrian homotopic to the maximal unlink $\cU_n$, where each component has $tb = -1$ and $\text{rot} = 0$.  This latter link has a Lagrangian filling by embedded disks.  Furthermore, by \autoref{lemma:legendrian-homotopy-trace}, the trace of the Legendrian homotopy from $\cU_n$ to $\Lambda$ can be realized by an immersed Lagrangian cylinder.  This can be glued to the Lagrangian filling of $\cU_n$ to produce an immersed Lagrangian filling of $\Lambda$.
\end{proof}

\section{Constructing Lagrangian caps} \label{sec:proofthm}

The goal of this section is to prove \autoref{T:LagrangianCap}, that is, to show how to construct a Lagrangian triple cap from a (geometric) triple grid diagram. In \autoref{subsec:pointstoarcs} we first construct Legendrian arcs in each of the three handlebodies which connect the points on the torus given by the triple grid diagram. Then in \autoref{subsec:solidtori} we show how to put two of these solid tori together to create Legendrian links in $S^3$, describe front projections of these links, and use the projections to show that these Legendrians are isotopic to those produced by the standard Legendrianization of a grid diagram as described in the introduction. Finally in \autoref{subsec:cap} we construct the triple cap with boundary conditions given by these Legendrian arcs.

\subsection{From points on a torus to arcs in a solid torus} \label{subsec:pointstoarcs}

First we will work in an abstract solid torus $H = D^2 \times S^1$ with boundary $\Sigma = \partial H = S^1 \times S^1$. We will use coordinates $(p,\mu,\lambda)$ on $H$ where $p = r^2 \in [0,1]$ and $(r,\mu)$ are standard polar coordinates on $D^2$, and $\lambda$ is the angular coordinate on $S^1 = \RR / 2\pi \ZZ$. Let $\mathcal{F}$ be the foliation of $H$ by meridional disks, that is, disks tangent to the integrable plane field $\ker d\lambda$, and let $\xi$ be the positive contact structure $\ker (d\lambda + p d\mu)$ on $H$. 

Our goal, given points on $\Sigma$ coming from a triple grid diagram, is to construct two sets of arcs in $H$ connecting the points: \textit{flat arcs} and \textit{Legendrian arcs}. The flat arcs will be tangent to $\mathcal{F}$, and from these we will construct Legendrian arcs tangent to $\xi$. Note that a Legendrian arc in $(H,\xi)$ is completely determined by its \textit{front projection} onto $\Sigma$. Furthermore, the $\lambda/\mu$ slope of this front projection is constrained to lie between $0$ and $-1$, and the $p$ coordinate is recovered from the front projection by the equation $p = -d\lambda/d\mu$ (in other words, $p$ is the negative of the $\lambda/\mu$ slope).

We start with a collection of points $P_1,P'_1, \ldots P_b, P'_b$ on $\Sigma$ such that each pair $P_i$ and $P'_i$ lie on the boundary of the same meridional disk, i.e. have the same $\lambda$ coordinate. In $(p,\mu,\lambda)$ coordinates, these points have coordinates of the form $P_i = (1,m_i,l_i)$ and $P_i' = (1,m_i',l_i)$. For each such pair, let $D_i$ be the meridional disk (leaf of $\mathcal{F}$) $\{\lambda = l_i\}$, and choose an arc $A_i$ in $H$ satisfying the following four properties.
\begin{enumerate}
    \item $A_i$ is properly embedded in $D_i$ with endpoints $P_i$ and $P'_i$.
    \item $A_i$ does not pass through the center of the disk $D_i$.
    \item $A_i$ cuts $D_i$ into two components, and the area of the component not containing the center $\{0\}$ of the disk (measured by the area form $dp \wedge d\mu = 2 r dr d\theta$) is equal to the distance on the boundary of the disk between $P_i$ and $P_i'$ (measured by $d\mu$, and where we measure the distance using an arc in $\partial D_i$ which is homotopic to $A_i$ in $D_i \setminus \{0\}$).
    \item On $\{p \in [1/2,1]\}$, the arc $A_i$ is radial, i.e. has constant angular coordinate $\mu$.
\end{enumerate}
Two such arcs and the $(p,\mu,\lambda)$ coordinate system on a solid torus are illustrated in \autoref{F:ArcsInTorus}.

\begin{figure}
    \labellist
    \small\hair 2pt
    \pinlabel {$p$} at 158 3
    \pinlabel {$\mu$} at 164 36
    \pinlabel {$\lambda$} at 82 222
    \endlabellist
    \centering
    \includegraphics[width=4 cm]{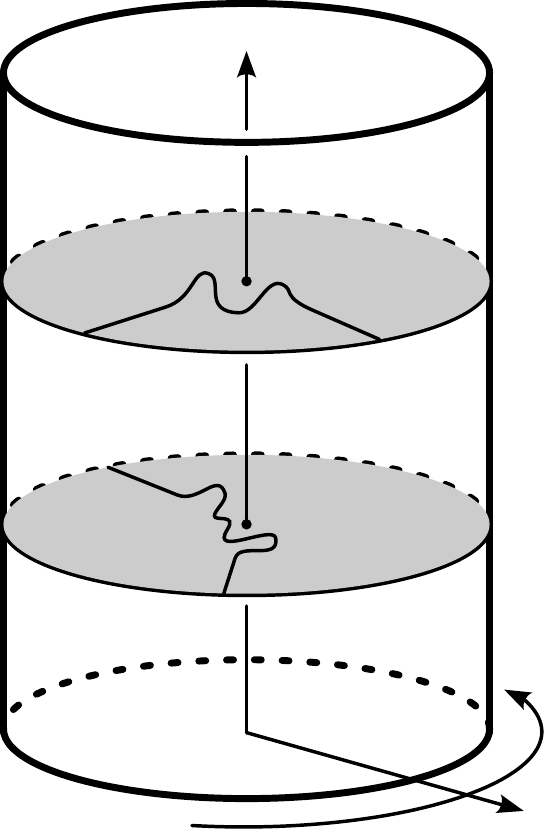}
    \caption{Arcs on meridional disks in a solid torus $H$, with radial coordinate $p$, meridional coordinate $\mu$ and longitudinal coordinate $\lambda$. The torus is illustrated as a cylinder, with the understanding that the top is glued to the bottom by the identity.} 
    \label{F:ArcsInTorus}
\end{figure}

These conditions mean that we can parameterize $A_i$ as $(p(t),\mu(t), \lambda(t))$, with $t \in [-1,1]$, satisfying the following conditions:
\begin{itemize}
    \item $\lambda(t) = l_i$,
    \item $p(t) \in (0,1]$,
    \item for $t \in [-1,-1/2]$, $p(t) = -t$ and $\mu(t) = m_i$,
    \item for $t \in [1/2,1]$, $p(t) = t$ and $\mu(t) = m'_i$,
    \item and $ \displaystyle \int_{-1}^1  p(t) \mu'(t) dt = 0$.
\end{itemize}
Now extend the domain of this parameterized curve to $t \in \RR$ as follows:
\begin{itemize}
    \item $\lambda(t) = l_i$ for all $t \in \RR$,
    \item for $t \in [-\infty,-1/2]$, $p(t) = -t$ and $\mu(t) = m_i$,
    \item and for $t \in [1/2,\infty]$, $p(t) = t$ and $\mu(t) = m'_i$.
\end{itemize}
This is a parameterized curve in $\RR^2 \times S^1$ which restricts to $D^2 \times S^1$ as the original arc in $H$. Then, define a new function 
\[\tilde{\lambda}(t) = l_i - \int_{-1}^t p(\tau) \mu'(\tau) d \tau \]
and consider the path $\tilde{A}_i$ parameterized by $(p(t), \mu(t), \tilde{\lambda}(t))$. This path enjoys two key properties. First, $\tilde{\lambda}'(t) = - p(t) \mu'(t)$, so that $\tilde{A}_i$ is Legendrian with respect to the contact structure $\xi = \ker(d \lambda + p d \mu)$. Second, $\tilde{\lambda}(t) = l_i$ for $t \in (-\infty,-1/2] \cup [1/2,\infty)$, so that $\tilde{A}_i$ agrees with $A_i$ for $p \geq 1/2$. This fact is straightforward for $t \in (-\infty,-1/2]$ but depends on the enclosed area condition to get the result for $t \in [1/2,\infty)$.

Note that the flat arcs $A_i$ can be recovered from the Legendrian arcs $\tilde{A}_i$ simply by projecting onto the appropriate meridional disks, and that in fact this projection is the standard Lagrangian projection of a Legendrian. Although the above discussion describes the Legendrian arcs as constructed from the flat arcs, in fact in the proof of the following proposition we will construct the Legendrian arcs first and then get the flat arcs as their Lagrangian projections. Some care is needed in constructing the Legendrian arcs to make sure that their Lagrangian projections are in fact embedded.

\begin{proposition} \label{P:FlatAndLegendrianTangles}
Given the initial data of $b$ pairs of points $P_i = (1,m_i,l_i)$ and $P_i' = (1,m_i',l_i)$ on $\Sigma$, with $m_i < m'_i$, and given some $\epsilon>0$ such that the $b$ intervals $[l_i-\epsilon,l_i+\epsilon]$ are disjoint, there exists a system of flat arcs $A_i$ and corresponding Legendrian arcs $\tilde{A}_i$ related as above with the following properties.
\begin{enumerate}
    \item Each $\tilde{A}_i$ lies in the $3$--dimensional wedge $\{(p,\mu,\lambda) \mid m_i-\epsilon < \mu < m_i + \epsilon, l_i - \epsilon \leq \lambda \leq l_i+\epsilon \}$.
    \item The front projection of each $\tilde{A}_i$ has the qualitative features illustrated in \autoref{F:SkewedN}, i.e. starts at $P_i$, heads up and to the left, encounters a cusp, then turns down and to the right, passes $P'_i$ on the left, encounters another cusp, and then turns up and to the left to end up at $P'_i$. 
\end{enumerate}
Furthermore, any two such choices of systems of flat arcs $A_i$ (flat $b$--component tangles) yield systems of Legendrian arcs $\tilde{A}_i$ (Legendrian $b$--component tangles) which are Legendrian isotopic rel. $\{1/2 \leq p \leq 1\}$.
\end{proposition}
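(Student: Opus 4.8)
The plan is to prove the existence half (construct $A_i$ and $\tilde A_i$ with properties (1)--(2)) and the uniqueness half separately, and in both cases to reduce at once to the case $b = 1$: since the intervals $[l_i-\epsilon,l_i+\epsilon]$ are disjoint and property (1) confines each $\tilde A_i$ to the slab $\{|\lambda-l_i|\le\epsilon\}$, the arcs occupy disjoint regions and the construction, or isotopy, of one never interacts with the others.

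For existence, I would build the Legendrian arc first, directly from its front projection to $\Sigma$, and recover the flat arc as its Lagrangian projection onto $D_i$. Since a Legendrian in $(H,\xi)$ is determined by its front and $p$ equals minus the $\lambda/\mu$-slope of the front along it, I need a front made of two radial segments near $P_i$ and $P_i'$ covering $\{p\ge 1/2\}$ (these collapse to the points $P_i,P_i'$ in the front) joined by a curve from $P_i$ to $P_i'$ whose slope stays in $(-1,0)$ --- slope $0$ would place the arc on the core of $H$, and slope beyond $-1$ would push it outside $H$. I would draw that middle curve as the ``skewed N'' of \autoref{F:SkewedN}: out of $P_i$, up and to the left to a first ordinary cusp, then down and to the right, past $P_i'$ on the left, to a second cusp, then up and to the left into $P_i'$. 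The positions of the two cusps and the ``depth'' $p$ along the three monotone pieces are free; I would tune them to keep the $\lambda$-excursion below $\epsilon$ (a shallow zigzag) and the $\mu$-excursion inside the prescribed wedge, and --- the key point --- to arrange $\int p\,d\mu = 0$ over the arc. That last equation says precisely that the Legendrian lift returns to $\lambda = l_i$ at its far end, and, applying Stokes' theorem to the global primitive $p\,d\mu = x\,dy - y\,dx$ of the area form $dp\wedge d\mu$, it is equivalent to the enclosed-area condition~(3) on the resulting flat arc. It then remains only to check that the Lagrangian projection $A_i$ is embedded: a transverse self-crossing of the front has its branches at distinct slopes, hence distinct $p$, so it is not a crossing of the projection; near a cusp, where $\mu' = 0$ but $p'\ne 0$, the projection is a smooth embedded turn with the two sides at distinct $p$ for equal $\mu$; and a generic perturbation removes any remaining tangential coincidence. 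Reading off coordinates then gives (1)--(4) for $A_i$ and (1)--(2) for $\tilde A_i$.

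For uniqueness, I would show that for each $i$ the space of flat arcs in $D_i$ satisfying (1)--(4) is non-empty and connected, and observe that $A_i \mapsto \tilde A_i$ (via the $\tilde\lambda$-formula) is continuous, so a path of valid flat arcs lifts to a Legendrian isotopy. Up to isotopy rel endpoints in the punctured disk $D_i\setminus\{0\}$, an embedded arc from $P_i$ to $P_i'$ is classified by its winding number about the center; condition~(3), together with the fact that the enclosed area lies strictly between $0$ and the total area of the disk, is what pins down the relevant isotopy class. Within that class I would connect any two valid arcs by an explicit straightening that stays radial on $\{p\ge 1/2\}$ and, via an intermediate-value adjustment of a one-parameter ``depth,'' keeps $\int p\,d\mu = 0$ at every stage. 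Since every arc along such a path restricts to the same pair of radial segments on $\{p\ge 1/2\}$, the lifted Legendrian isotopy is rel $\{1/2\le p\le 1\}$; carrying it out simultaneously in the disjoint disks $D_i$ gives the isotopy of the full $b$-component tangle.

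The main obstacle, in both halves, is control of the Lagrangian projection: in the existence half one must meet the slope bound, the skewed-N shape, the wedge, the closing-up/area equation, and embeddedness of the flat arc all at once; in the uniqueness half the real content is that the valid flat arcs form a single isotopy class and that one can move between any two of them through arcs that remain embedded and correctly normalized. Verifying that the two singular points of the front are genuine ordinary cusps and contribute no self-intersections to the Lagrangian projection is the technical core of both arguments.
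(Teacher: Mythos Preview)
Your overall structure matches the paper's: build the Legendrian from its front, recover the flat arc as the Lagrangian projection, and argue uniqueness separately. But there is a genuine gap in your embeddedness check for the Lagrangian projection. You argue that transverse front crossings have distinct slopes, hence distinct $p$, and so do not give projection crossings; that cusps are locally fine; and that a generic perturbation removes any remaining ``tangential coincidence.'' The missing case is two points on the Legendrian with the same $\mu$ and the same front slope (hence the same $p$) but different $\lambda$: these are not front crossings, yet they give a transverse double point of the Lagrangian projection, and they are \emph{not} removed by generic perturbation --- a generic Legendrian arc has isolated such double points (Reeb chords). The paper handles this with a specific observation you do not make: arrange the slope of the front to be monotone increasing on the first half of the path from $P_i$ to $P'_i$ and monotone decreasing on the second half. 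Then $p$ is injective on each half, and together with the skewed-N shape this forces the Lagrangian projection to be embedded. The paper explicitly flags that without this care the flat arc may fail to be embedded.

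For uniqueness you take a longer route than the paper. You work on the flat-arc side, arguing that the space of valid $A_i$ is connected and lifting a path of flat arcs to a Legendrian isotopy. The paper instead observes directly that the skewed-N qualitative features determine the planar isotopy type of the front, and two Legendrians with planar-isotopic fronts (slopes kept in $[-1/2,0]$) are Legendrian isotopic rel the radial collar --- a one-line argument that avoids any analysis of the flat-arc moduli space. One small correction to your version: condition~(3) alone does not pin down the isotopy class of $A_i$ in the punctured disk, since ``enclosed area $=$ homotopic boundary-arc length'' can be arranged in either class; it is property~(2), the skewed-N front shape, that actually selects the class.
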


\begin{figure}
    \labellist
    \small\hair 2pt
    \pinlabel {$\mu$} [t] at 100 5
    \pinlabel {$\lambda$} [r] at 5 50
    \pinlabel {$l_i - \epsilon$} [r] at 15 25
    \pinlabel {$l_i + \epsilon$} [r] at 15 78
    \pinlabel {$l_i$} [l] at 185 50
    \pinlabel {$m_i-\epsilon$} [t] at 15 15
    \pinlabel {$m'_i+\epsilon$} [t] at 185 15
    \pinlabel {$m_i$} [t] at 50 13
    \pinlabel {$m'_i$} [t] at 150 15
        \endlabellist
    \centering
    \includegraphics[width=5 cm]{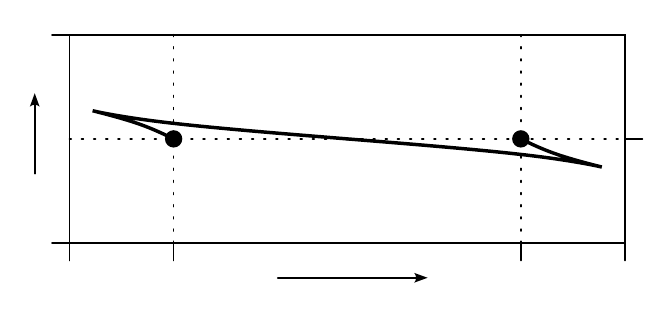}
    \caption{Front projection of a Legendrian arc in a solid torus, with all slopes between $0$ and $-1/2$.}
    \label{F:SkewedN}
\end{figure}

\begin{proof}
    The fact that any two Legendrian arcs with front projections as in \autoref{F:SkewedN} are Legendrian isotopic simply follows from the fact that the qualitative features described determine the planar isotopy type of the front diagram, and we can easily arrange to keep the slopes between $0$ and $-1/2$ so that the arcs are radial  for $p \in [1/2,1]$. Another feature of the front diagram is that the slope can be arranged to be monotonically increasing for the first half of the path from $P_i$ to $P'_i$, and then monotonically decreasing for the second half of the path. This means that the Lagrangian projection to the $(p,\mu)$ coordinate plane is embedded, and it is precisely this Lagrangian projection, when placed in the meridional disk at $\lambda = l_i$, that is the flat arc $A_i$. If we are not careful about the slopes we may not get embedded flat arcs.
\end{proof}

\begin{definition}
    A system of flat arcs $\{A_i\}$ and associated system of Legendrian lifts $\tilde{A}_i$ as in \autoref{P:FlatAndLegendrianTangles} will be called a {\em trivial flat tangle} with {\em associated trivial Legendrian tangle}.
\end{definition}

\subsection{Putting two solid tori together} \label{subsec:solidtori}

We now consider $S^3$ built by gluing two solid tori $H_\alpha$ and $H_\beta$ together, each a copy of the standard solid torus $H$ discussed in the preceding section, now with coordinates $(p_\alpha,\mu_\alpha,\lambda_\alpha)$ and $(p_\beta,\mu_\beta,\lambda_\beta)$.  In the standard Heegaard splitting, we see $S^3$ as $H_\alpha \cup H_\beta$ where the coordinates on $\Sigma_\alpha = \partial H_\alpha$ and $\Sigma_\beta = \partial H_\beta$ are related by the (orientation reversing) identifications:
\begin{align*}
    \mu_\beta &= \lambda_\alpha, \\
    \lambda_\beta &=  \mu_\alpha.
\end{align*}
The two contact structures $\xi_\alpha$ and $\xi_\beta$ coming from the contact structure $\xi$ on $H$ then glue together to give the standard contact structure on $S^3$.

Now, given a collection of $2b$ points on the torus $\Sigma_\alpha = \Sigma_\beta$ forming, in the $\mu_\alpha$ direction, pairs with the same $\lambda_\alpha$ coordinates and, in the $\mu_\beta= \lambda_\alpha$ direction, pairs with the same $\lambda_\beta = \mu_\alpha$ coordinates, we can construct trivial flat tangles and associated trivial Legendrian tangles  in each solid torus $H_\alpha$ and $H_\beta$. Since the Legendrian tangles are radial near the boundary of each solid torus, they glue together to form a closed Legendrian link in $(S^3_{\alpha \beta}, \xi_{\alpha \beta})$.

\begin{proposition} \label{P:JoiningLegendrianTangles}
    Given a geometric triple grid diagram $D$, the Legendrian link constructed by gluing together the trivial Legendrian tangles coming from \autoref{P:FlatAndLegendrianTangles} in the solid tori $H_\alpha$ and $H_\beta$ is Legendrian isotopic to the standard $\alpha \beta$ Legendrianization $\Lambda_{\alpha \beta}$ of the grid diagram. Cyclically permuting the three colors gives the same result for $\beta \gamma$ and for $\gamma \alpha$.
\end{proposition}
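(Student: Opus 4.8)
The plan is to produce an explicit front diagram for the glued Legendrian link $\Lambda$ and then to isotope it to the front of the standard Legendrianization. A useful preliminary reduction is the uniqueness clause of \autoref{P:FlatAndLegendrianTangles}: any two choices of flat arcs give Legendrian tangles that are Legendrian isotopic rel $\{1/2 \le p \le 1\}$ in each solid torus, and such an isotopy fixes the radial ends, hence glues to an ambient Legendrian isotopy of $\Lambda$. So $\Lambda$ depends only on the $2b$ points up to Legendrian isotopy, and I am free to pick whatever flat arcs are convenient.

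First I would fix the coordinate dictionary. Since the $\alpha$-curves bound meridian disks of $H_\alpha$, on $\Sigma=\partial H_\alpha$ they are the circles $\{\lambda_\alpha=\text{const}\}$; identifying $\Sigma$ with the grid torus $T^2=\RR^2/\ZZ^2$ so that these become the vertical lines $\{x=\text{const}\}$ forces $(\mu_\alpha,\lambda_\alpha)$ to correspond to the grid coordinates $(y,x)$, and then the gluing $\mu_\beta=\lambda_\alpha$, $\lambda_\beta=\mu_\alpha$ makes $(\mu_\beta,\lambda_\beta)$ correspond to $(x,y)$. The $2b$ points of $D$ group, by the $\alpha$-curve containing them, into $b$ meridional pairs for $H_\alpha$, and by the $\beta$-curve into $b$ meridional pairs for $H_\beta$. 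Feeding these into \autoref{P:FlatAndLegendrianTangles} produces trivial Legendrian tangles in $H_\alpha$ and in $H_\beta$; because $\xi_\alpha$ and $\xi_\beta$ agree on $\Sigma$ and their common radial direction is tangent to both tangles near $\Sigma$, the two tangles glue across the $2b$ points into an embedded Legendrian link $\Lambda\subset(S^3,\xi_{std})$ transverse to $\Sigma$ at exactly those points.

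Next I would read off a front for $\Lambda$ on $\Sigma$. Pushing the $H_\alpha$-arcs slightly into $H_\alpha$ and the $H_\beta$-arcs slightly into $H_\beta$ places $\Lambda$ in a collar $\Sigma\times(-1,1)$ of $\Sigma$, with the two tangles on opposite sides; projecting to $\Sigma$ and recording which side each strand lies on gives a genuine link diagram. By \autoref{P:FlatAndLegendrianTangles} and \autoref{F:SkewedN}, in the $(x,y)$ picture this diagram is the grid link with each same-column segment thickened to a steep ``skewed $N$'' confined to an $\epsilon$-neighborhood of its $\alpha$-curve (with slopes $dx/dy\in[-\tfrac12,0)$) and each same-row segment thickened to a shallow skewed $N$ near its $\beta$-curve (with slopes $dy/dx\in[-\tfrac12,0)$), the two families meeting at the $2b$ points. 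For $\epsilon$ small the $\alpha$-neighborhoods are pairwise disjoint and likewise the $\beta$-neighborhoods, so every crossing pairs an $\alpha$-strand with a $\beta$-strand and occurs exactly at a crossing of the grid link; there the $\alpha$-strand lies on the $H_\alpha$-side of the collar and the $\beta$-strand on the $H_\beta$-side, so every crossing is a $\beta$-strand (same row, hence horizontal) passing over an $\alpha$-strand (same column, hence vertical), reproducing the horizontal-over-vertical convention defining the grid link.

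Finally I would cut $\Sigma$ along one vertical and one horizontal circle disjoint from $\Lambda$ (possible for $\epsilon$ small, since every arc stays $\epsilon$-close to grid lines and within a fundamental domain in the relevant direction), obtaining a planar Legendrian front, and transform it by planar isotopy and the standard moves between fronts of Legendrian-isotopic links into the $45^\circ$-rotated, cusped grid diagram: each skewed $N$ is unwound so that its two cusps migrate to the endpoints of the corresponding grid segment, where they survive or cancel exactly according to the rule (smooth the corner, or insert a cusp to kill the vertical tangency) defining the standard Legendrianization, while every crossing remains horizontal-over-vertical. Assembling these local manipulations into one global Legendrian isotopy — keeping the cut-open torus under control, tracking the cusps, and checking that the ``overshoot'' portions of the skewed $N$'s obstruct nothing — is the step I expect to be the main obstacle and the technical heart of the argument. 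Once the $\alpha\beta$ case is settled, the statements for $\beta\gamma$ and $\gamma\alpha$ follow by applying the order-$3$ automorphism of $T^2$ cyclically permuting $(\alpha,\beta,\gamma)$, under which the Heegaard splitting, the contact structures, the grid diagram, and the entire construction are equivariant.
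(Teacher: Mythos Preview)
Your proposal is correct and follows essentially the same route as the paper: assemble a front on $\Sigma$ by replacing each grid segment with a copy of the skewed-$N$ front from \autoref{P:FlatAndLegendrianTangles}, then pass to the standard Legendrianization via Legendrian Reidemeister moves. The paper's version is terser---it names the specific moves (Reidemeister~I and~II), records the exact orientation of each skewed~$N$ (the $\alpha$-copy is \autoref{F:SkewedN} rotated $90^\circ$ counterclockwise, the $\beta$-copy is \autoref{F:SkewedN} reflected across the vertical axis), and dispatches the comparison by a single worked example (\autoref{F:TwoFrontsForTrefoil})---so the step you flag as ``the main obstacle'' is in fact routine once the local pictures are pinned down.
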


\begin{proof}
    The Legendrian produced by gluing together our Legendrian tangles has a front projection obtained from the grid diagram by replacing each vertical and horizontal arc by a vertical or horizontal copy of the front diagram shown in \autoref{F:SkewedN}. \autoref{F:TwoFrontsForTrefoil} compares the resulting front diagram for a simple grid diagram of the trefoil to the front diagram coming from the standard Legendrianization. Note that to implant the local front diagram in \autoref{F:SkewedN} into \autoref{F:TwoFrontsForTrefoil}, we need to remember that, when the boundary of the $H_\alpha$ solid torus is identified with the torus on which the $\alpha \beta$ grid diagram is drawn, the $\mu$ axis points up and the $\lambda$ axis points left, while with respect to the $H_\beta$ solid torus, the $\mu$ axis points to the left and the $\lambda$ axis points up. In other words, red is a copy of \autoref{F:SkewedN} rotated counterclockwise $90^\circ$ while blue is a copy of \autoref{F:SkewedN} reflected across the vertical axis. The two fronts illustrated are related by Legendrian Reidemeister I and Reidemeister II moves, and this example is sufficient to illustrate the general case.
\end{proof}
\begin{figure}
    \centering
    \includegraphics[width=10 cm]{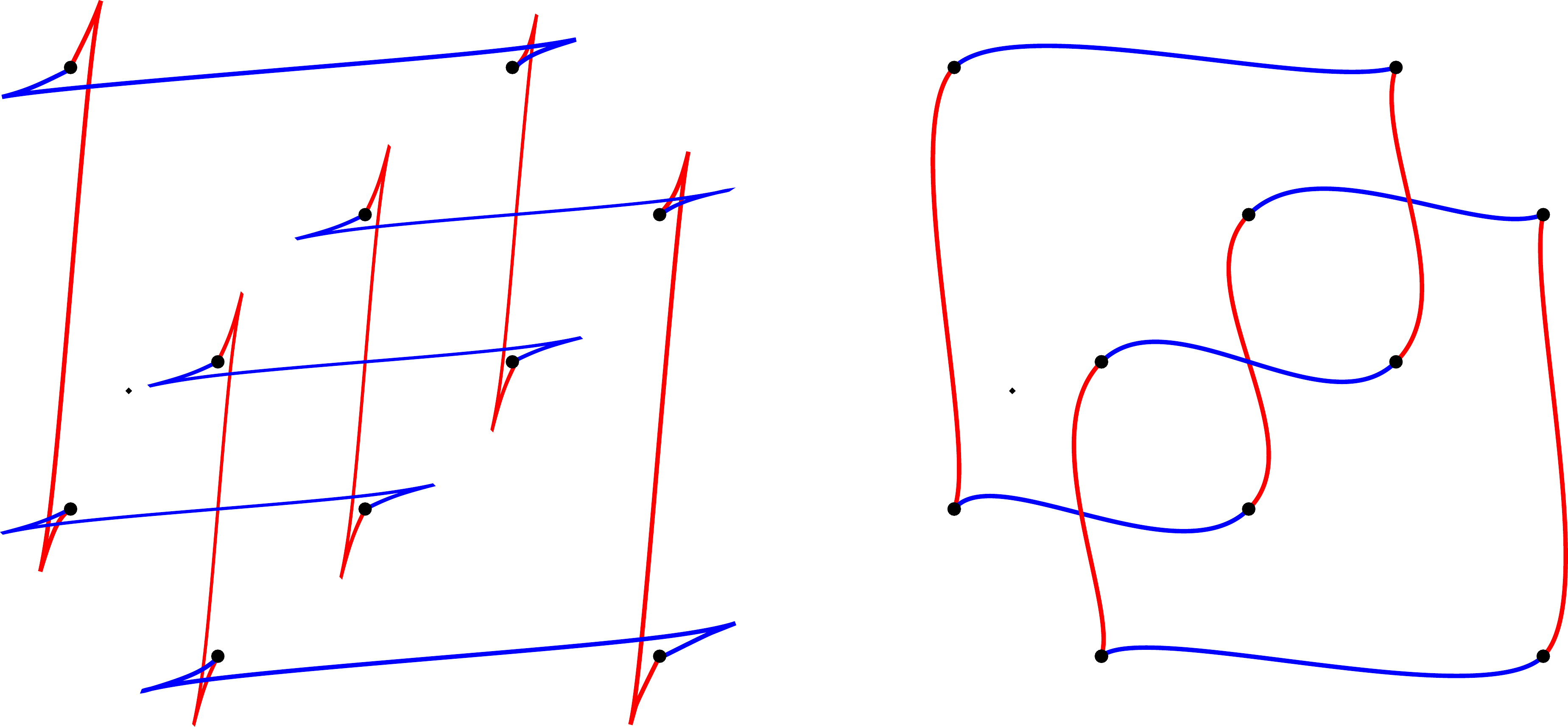}
    \caption{Two different front projections for a trefoil, the one on the left coming from a grid diagram via our Legendrian tangle construction and the one on the right coming from the same grid diagram via the standard Legendrianization process.}
    \label{F:TwoFrontsForTrefoil}
\end{figure}

\subsection{Constructing the triple cap} \label{subsec:cap}

We first establish standard toric coordinates on $\CP{2}$ so that we can work with $\CP{2}$ via its moment map image. Starting with homogeneous coordinates $[z_1:z_2:z_3]$ on $\CP{2}$, consider standard (scaled) toric coordinates:
\begin{align*}
 p_1 &= \frac{3 |z_1|^2}{|z_1|^2 + |z_2|^2 + |z_3|^2} \in [0,3], \\
 q_1 &= \arg(z_1/z_3) \in \RR/2 \pi \ZZ, \\
 p_2 &= \frac{3 |z_2|^2}{|z_1|^2 + |z_2|^2 + |z_3|^2} \in [0,3], \\
 q_2 &= \arg(z_2/z_3) \in \RR/2 \pi \ZZ.
\end{align*}
The standard moment map is the map $\Pi: \CP{2} \to \Delta$ given by $\Pi([z_1:z_2:z_3]) = (p_1,p_2)$, where $\Delta$ is the right triangle in $\RR^2$ with vertices at $(0,0)$, $(0,3)$ and $(3,0)$. This is illustrated in \autoref{F:SimpleMomentMapImage}. With respect to these coordinates, the standard symplectic structure on $\CP{2}$ (up to scale) is $\omega = dp_1 \wedge dq_1 + dp_2 \wedge dq_2$. 
\begin{figure}
    \labellist
    \small\hair 2pt
    \pinlabel {$p_1$} at 305 8
    \pinlabel {$p_2$} at -15 280
    \endlabellist
    \centering
    \includegraphics[width=3 cm]{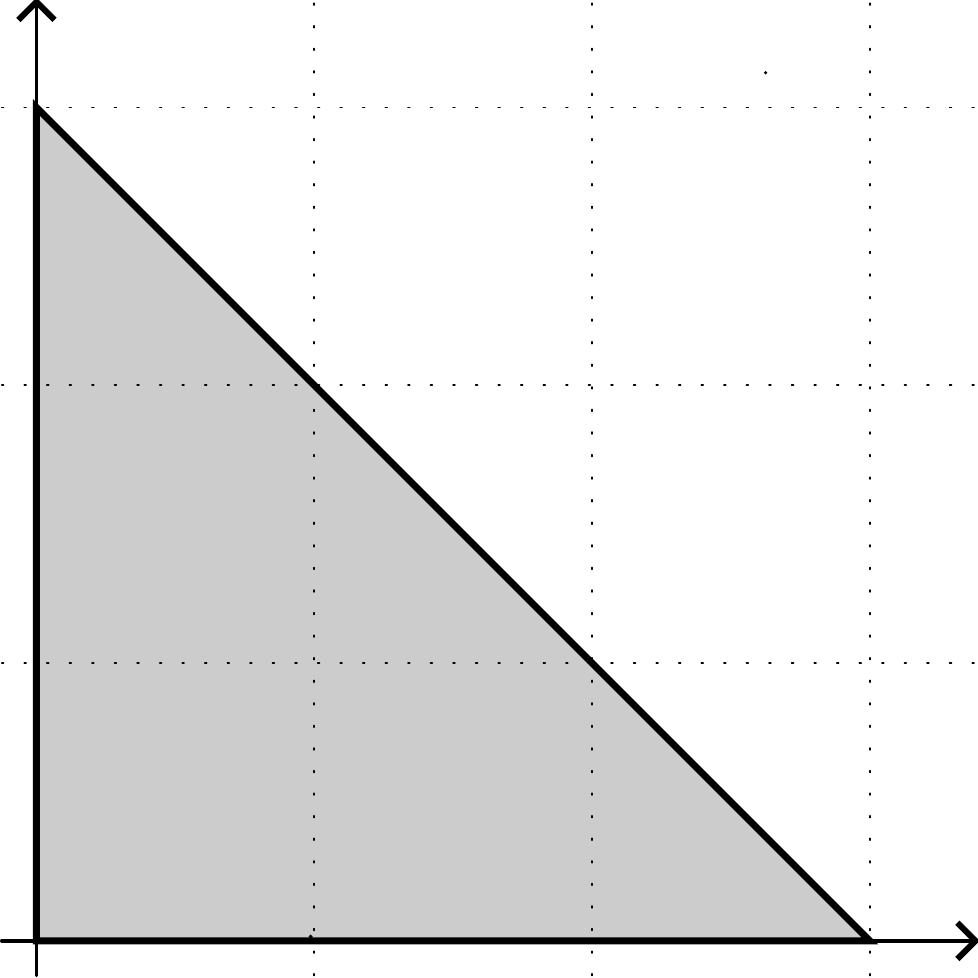}
    \caption{Moment map image for $\CP{2}$.}
    \label{F:SimpleMomentMapImage}
\end{figure}

We henceforth view our triple grid diagram $D$ as living on the torus $\Sigma = \Pi^{-1}(1,1)$ with coordinates $(q_2,q_1)$, where this orientation is chosen so that the product coordinates $(p_1,p_2) \times (q_2,q_1)$ on $\Delta \times \Sigma$ give the same orientation as our correctly oriented toric coordinates $(p_1,q_1,p_2,q_2)$ on $\CP{2}$. However, we draw the $q_2$ axis vertical and oriented upwards and the $q_1$ axis horizontal and oriented to the left. (We know this is confusing but ask the reader to bear with us, it is worth the time to get orientation conventions correct.) The horizontal lines in our grid are thus parallel to the $q_1$--axis, or given by the level sets of the function $q_2$, the vertical lines are parallel to the $q_2$--axis, or given by the level sets of the function $q_1$, and the diagonal lines, which we have been calling the ``slope $-1$ diagonals'', are given by the level sets of $q_1-q_2$. This is illustrated in \autoref{F:qCoordinates}.
\begin{figure}
    \labellist
    \small\hair 2pt
    \pinlabel {$q_1$} at -15 100
    \pinlabel {$q_2$} at 80 200
    \endlabellist
    \centering
    \includegraphics[width=4 cm]{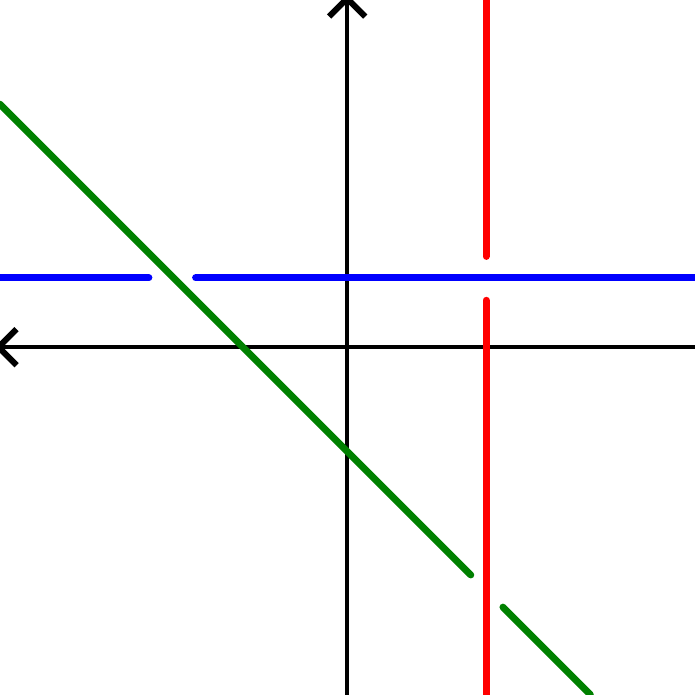}
    \caption{The $(q_2,q_1)$--plane.}
    \label{F:qCoordinates}
\end{figure}

The standard trisection of $\CP{2} = X = X_1 \cup X_2 \cup X_3$ is explicitly described as:
\begin{align*}
 X_1 &= \{(p_1,q_1,p_2,q_2) \mid p_1 + 2 p_2 \leq 3, 2 p_1 + p_2 \leq 3  \}, \\ 
 X_2 &= \{(p_1,q_1,p_2,q_2) \mid p_1 + 2 p_2 \geq 3, p_1 \leq p_2 \}, \\
 X_3 &= \{(p_1,q_1,p_2,q_2) \mid 2 p_1 + p_2 \geq 3, p_1 \geq p_2 \}. 
\end{align*}
The pairwise intersections are the solid handlebodies (solid tori) $H_{ij} = X_i \cap X_j$; a more conventional trisector's labelling is $H_\alpha = H_{31}$, $H_\beta = H_{12}$, $H_\gamma = H_{23}$. The boundary of each of these solid tori is the central genus one surface $\Sigma$. The core circles of the solid tori are given in coordinates as follows:
\begin{align*}
    C_\alpha &= \{(p_1,q_1,p_2,q_2) \mid (p_1,p_2) = (3/2,0)\} \subset H_\alpha, \\
    C_\beta &= \{(p_1,q_1,p_2,q_3) \mid (p_1,p_2) = (0,3/2)\} \subset H_\beta, \\
    C_\gamma &= \{(p_1,q_1,p_2,q_2) \mid (p_1,p_2) = (3/2,3/2)\} \subset H_\gamma.
\end{align*}
(Since the $(p_1,p_2)$ points $(3/2,0)$, $(0,3/2)$ and $(3/2,3/2)$ are in the interiors of edges of $\Delta$, the $(q_1,q_2)$ torus is collapsed to a circle in each case.) This is all illustrated in \autoref{F:MomentMapTrisection}.
\begin{figure}
    \labellist
    \small\hair 2pt
    \pinlabel {$H_\beta = H_{12}$} [r] at 0 100
    \pinlabel {$H_\alpha = H_{31}$} [t] at 100 5
    \pinlabel {$H_\gamma = H_{23}$} [l] at 198 130
    \pinlabel {$\Sigma$} [l] at 219 110
    \pinlabel {$C_\alpha$} [t] at 153 28
    \pinlabel {$C_\beta$} [r] at 30 148
    \pinlabel {$C_\gamma$} [bl] at 148 148 
    \pinlabel {$X_1$} at 80 80
    \pinlabel {$X_2$} at 80 170
    \pinlabel {$X_3$} at 170 80
    \endlabellist
    \centering
    \includegraphics[width=6 cm]{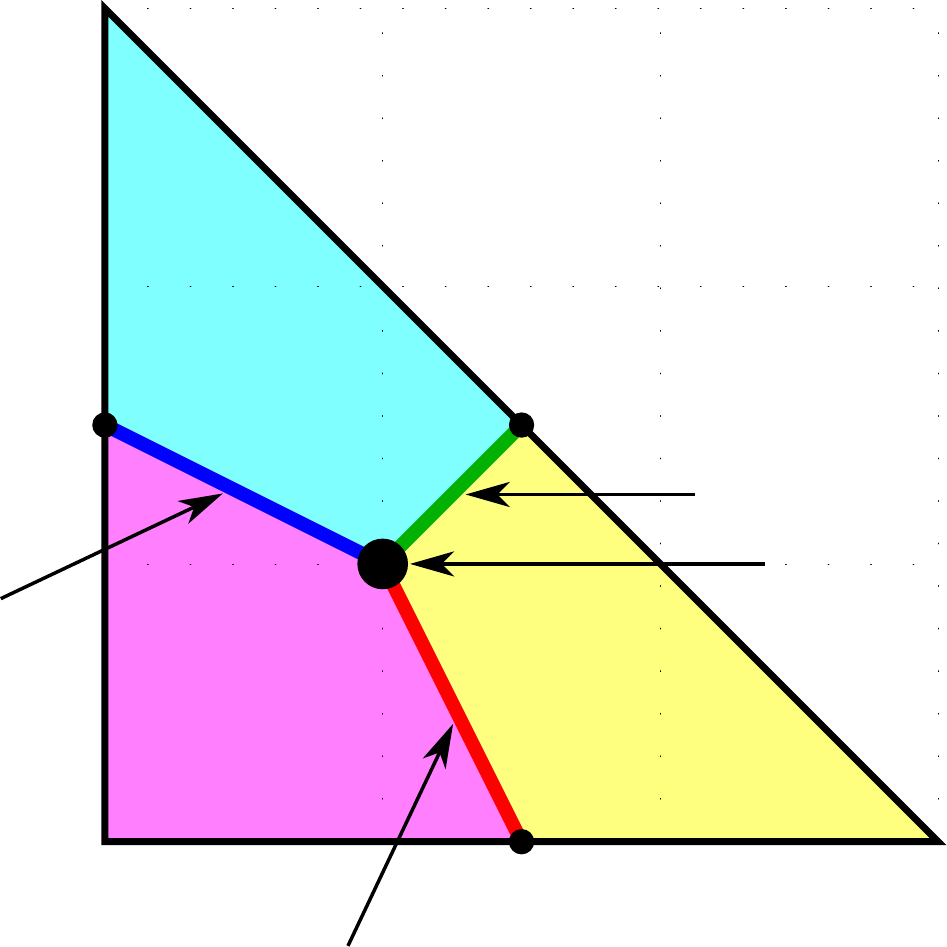}
    \caption{The standard trisection seen via the moment map.} 
    \label{F:MomentMapTrisection}
\end{figure}

Although our overall construction is naturally associated with a trisection of $\CP{2}$, in which $\Sigma$ is the central surface, we will in fact see that to give the construction as explicitly as possible it will be useful to decompose $\CP{2}$ into many more than three pieces. It is important to note, however, that our entire construction will be invariant under the order $3$ cyclic permutation $[z_1:z_2:z_3] \mapsto [z_2:z_3:z_1] \mapsto [z_3:z_1:z_2] \mapsto [z_1:z_2:z_3]$, which also cyclically permutes the vertices of $\Delta$ and induces the order $3$ cyclic permutation of our grid diagram. We will exploit this so that in some sense we only give one third of the construction and then apply this cyclic permutation.

First we note that the three $4$--balls defined in the introduction are given in toric coordinates as:
\begin{align*}
 B_1 &= \{p_1 + p_2 \leq 1/2\} \\
 B_2 &= \{3 - p_2 \leq 1/2\} \\
 B_2 &= \{3 - p_1 \leq 1/2\}
\end{align*}
These are illustrated in \autoref{F:3BallsInDelta}, which thus also illustrates our $4$--manifold with three boundary components, $X$, via its moment map image, which is a hexagon obtained by cutting three small corners off of the right triangle $\Delta$. 
\begin{figure}
    \centering
    \includegraphics[width=3 cm]{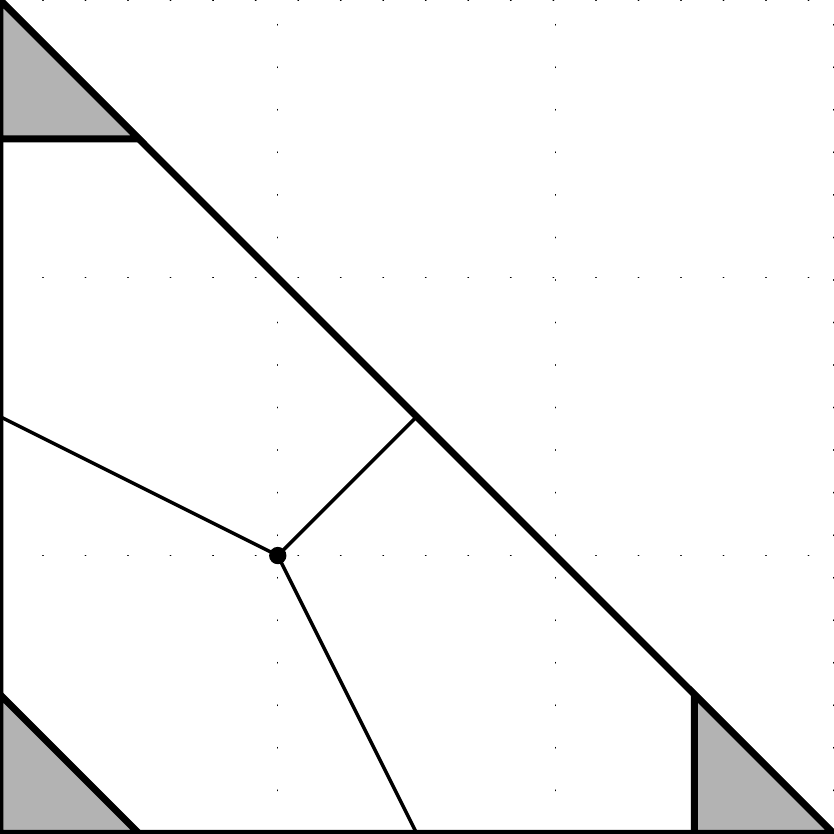}
    \caption{The images of three balls under the moment map, along with the trisection for reference.} 
    \label{F:3BallsInDelta}
\end{figure}
The three inward-pointing Liouville vector fields appear in the moment map image as the three radial vector fields emanating from the corners of $\Delta$, and are given in coordinates as:
\begin{align*}
 V_1 &= p_1 \partial_{p_1} + p_2 \partial_{p_2}, \\
 V_2 &= p_1 \partial_{p_1} + (p_2 - 3) \partial_{p_2}, \\
 V_3 &= (p_1 - 3) \partial_{p_1} + p_2 \partial_{p_2}.
\end{align*}
We will also want to work with the {\em symplectic vector fields} $S_{ij} = (1/3) (V_i - V_j)$. (The factor of $1/3$ is just for convenience in coordinate expressions.) In certain regions the Lagrangians we construct will be tangent to, and invariant under flow along, one or more of these symplectic vector fields. For future reference we give them in coordinates here:
\begin{align*}
    S_{13} &= \partial_{p_1} = -S_{31}, \\
    S_{12} &= \partial_{p_2} = -S_{21}, \\
    S_{23} &= \partial_{p_1} - \partial_{p_2} = - S_{32}.
\end{align*}
These vector fields are illustrated in \autoref{F:LiouvilleAndSymplectic}.

\begin{figure}
    \labellist
    \small\hair 2pt
    \pinlabel {$V_1$} [l] at 300 212
    \pinlabel {$V_2$} [l] at 300 182
    \pinlabel {$V_3$} [l] at 300 152
    \pinlabel {$\pm S_{31}$} [l] at 300 122
    \pinlabel {$\pm S_{12}$} [l] at 300 92
    \pinlabel {$\pm S_{23}$} [l] at 300 62
    \endlabellist
    \centering
    \includegraphics[width=6 cm]{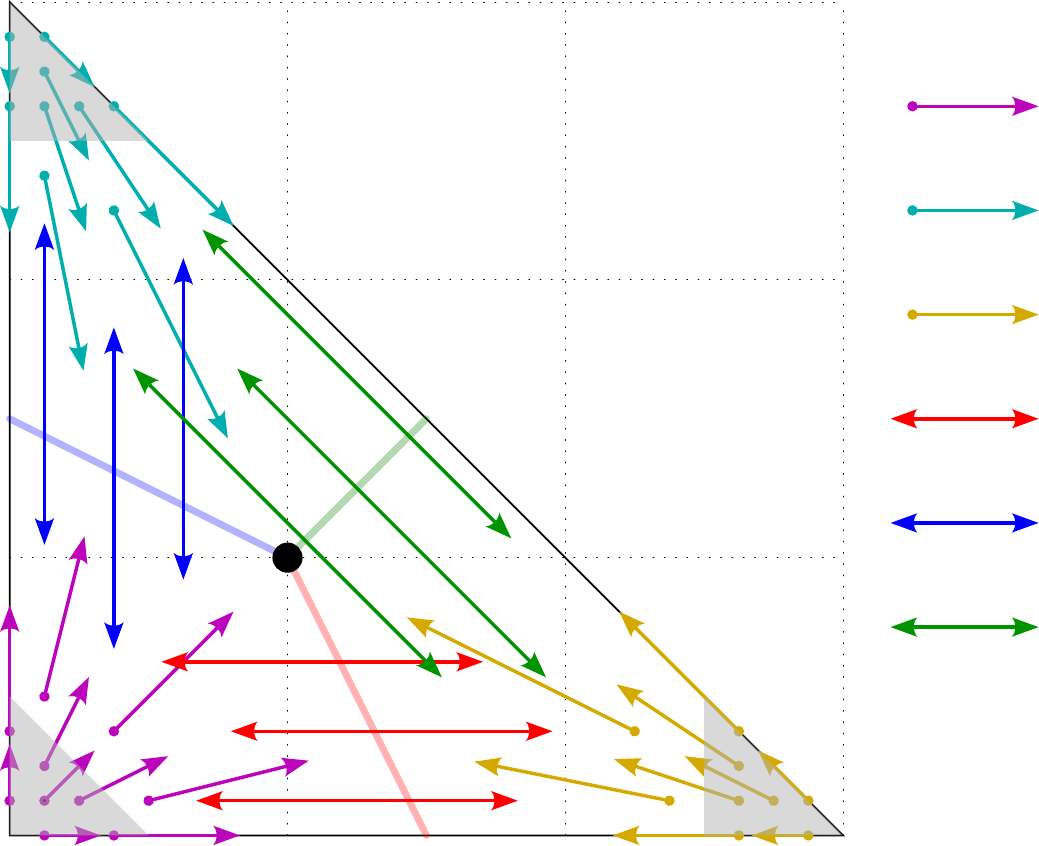}
    \caption{Three Liouville and three symplectic vector fields.} 
    \label{F:LiouvilleAndSymplectic}
\end{figure}

We have already established an orientation on $\Sigma$, namely by the ordered coordinates $(q_2,q_1)$. We will orient each solid torus $H_\bullet$ so that $\partial H_\bullet = \Sigma$ as oriented manifolds. Define the following ``$p$--coordinate" on each $H_\bullet$:
\begin{align*}
    p_\alpha &= p_2, \\
    p_\beta &= p_1, \\
    p_\gamma &= 3 - p_1 - p_2.
\end{align*}
Then each solid torus minus its core, $H_\bullet \setminus C_\bullet$, is explicitly parameterized, respecting orientations, as $(0,1] \times T^2$ by the coordinates $(p_\bullet,q_2,q_1)$.

Note that the symplectic vector field $S_{13}$ is transverse to $H_\alpha$, $S_{12}$ is transverse to $H_\beta$ and $S_{23}$ is transverse to $H_\gamma$. Thus each induces a closed $1$--form $\eta_\bullet = \imath_{S_{ij}} \omega|_{H_\bullet}$, and any Lagrangian surface which is tangent to $S_{ij}$ must intersect the corresponding solid torus $H_\bullet$ in curves which are tangent to the kernel of $\eta_\bullet$. These $1$--forms are as follows, expressed in the coordinates $(p_\bullet,q_2,q_1)$:
\begin{align*}
    \eta_\alpha &= dq_1, \\
    \eta_\beta &= dq_2, \\
    \eta_\gamma &= dq_1 - dq_2.
\end{align*}
The kernels of these closed $1$--forms integrate to give foliations of the handlebodies by meridional disks; we label these foliations $\mathcal{F}_\alpha$, $\mathcal{F}_\beta$ and $\mathcal{F}_\gamma$.

In fact we can do more to standardize coordinates on our handlebodies: If $p_\bullet$ is a radial coordinate on the solid torus $H_\bullet$ then we can also write down meridional and longitudinal coordinates $\mu_\bullet$ and $\lambda_\bullet$ as follows:
\begin{align*}
    \mu_\alpha &= q_2, \lambda_\alpha = q_1, \\
    \mu_\beta &= q_1 - q_2, \lambda_\beta = - q_2, \\
    \mu_\gamma &= -q_1, \lambda_\gamma = q_2 - q_1.
\end{align*}
In other words, the coordinates $(p_\bullet,\mu_\bullet,\lambda_\bullet)$ explicitly parameterize $H_\bullet$ as $D^2 \times S^1$ via $(p_\bullet,\mu_\bullet) = (r^2,\theta)$ giving polar coordinates $(r,\theta)$ on $D^2$ and $\lambda_\bullet$ being the angular coordinate on the $S^1$ factor. Each foliation $\mathcal{F}_\bullet$ is then just given by parallel meridional disks $D^2 \times \{c\}$ for $c \in S^1$, and is tangent to the kernel of the closed $1$--form $d\lambda_\bullet$. For the record, note that the symplectic form $\omega$ restricts to each meridional disk as the area form $dp_\bullet \wedge d\mu_\bullet = 2 r dr d\theta$ which is {\em twice} the standard area form on $D^2$.

\begin{proof}[Proof of \autoref{T:LagrangianCap}]
Our Lagrangian cap will be constructed so as to be invariant under the symplectic vector fields $S_{ij}$ in neighborhoods of the solid tori $H_\bullet$, so we begin the construction by applying the methods of \autoref{subsec:pointstoarcs} to turn a given geometric triple grid diagram into a system of flat arcs and associated system of Legendrian lifts in each solid torus $H_\alpha$, $H_\beta$ and $H_\gamma$, as in \autoref{P:FlatAndLegendrianTangles}. First we work with the system $A_i$ of flat arcs and $\tilde{A}_i$ of Legendrian lifts in $H_\alpha$ and we begin building our Lagrangian surface $L(D)$ in pieces, with agreement on the overlaps so as to ensure smoothness.

Let $\Delta_{1/4} \subset \Delta$ be the right triangle in the $(p_1,p_2)$--plane with vertices at $(3/4,3/4)$, $(3/4,3/2)$ and $(3/2,3/4)$ as indicated in the center of \autoref{F:VariousDeltas} (the grey triangle). Let $P \subset \Sigma$ be the set of points making up our diagram $D$. Recall that $\Pi: \CP{2} \to \Delta$ is our moment map. In $\Pi^{-1}(\Delta_{1/4}) \subset \CP{2}$, note that our toric coordinates $(p_1,p_2,q_2,q_1)$ parameterize $\Pi^{-1}(\Delta_{1/4})$ as $\Delta_{1/4} \times \Sigma$. With respect to this parameterization, let $L(D) \cap \Pi^{-1}(\Delta_{1/4}) = \Delta_{1/4} \times P$; this is just one ``flat'' copy of $\Delta_{1/4}$ for each point on the grid diagram, and is clearly Lagrangian.
\begin{figure}
    \labellist
    \small\hair 2pt
    \pinlabel {$\Delta_\alpha$} [tl] at 145 0
    \pinlabel {$R_{\alpha,1}$} [t] at 90 0
    \pinlabel {$R_{\alpha,3}$} [tl] at 210 0
    \pinlabel {$\Delta_\beta$} [br] at 0 145
    \pinlabel {$R_{\beta,1}$} [r] at 0 90
    \pinlabel {$R_{\beta,2}$} [br] at 0 210
    \pinlabel {$\Delta_\gamma$} [bl] at 158 158
    \pinlabel {$R_{\gamma,2}$} [b] at 90 230
    \pinlabel {$R_{\gamma,3}$} [l] at 230 90
    \pinlabel {$\Delta_{1/4}$} [bl] at 210 140
    \pinlabel {$E_1$} [tr] at 13 13
    \pinlabel {$E_2$} [b] at 7 280
    \pinlabel {$E_3$} [l] at 280 7
    \endlabellist
    \centering
    \includegraphics[width=6 cm]{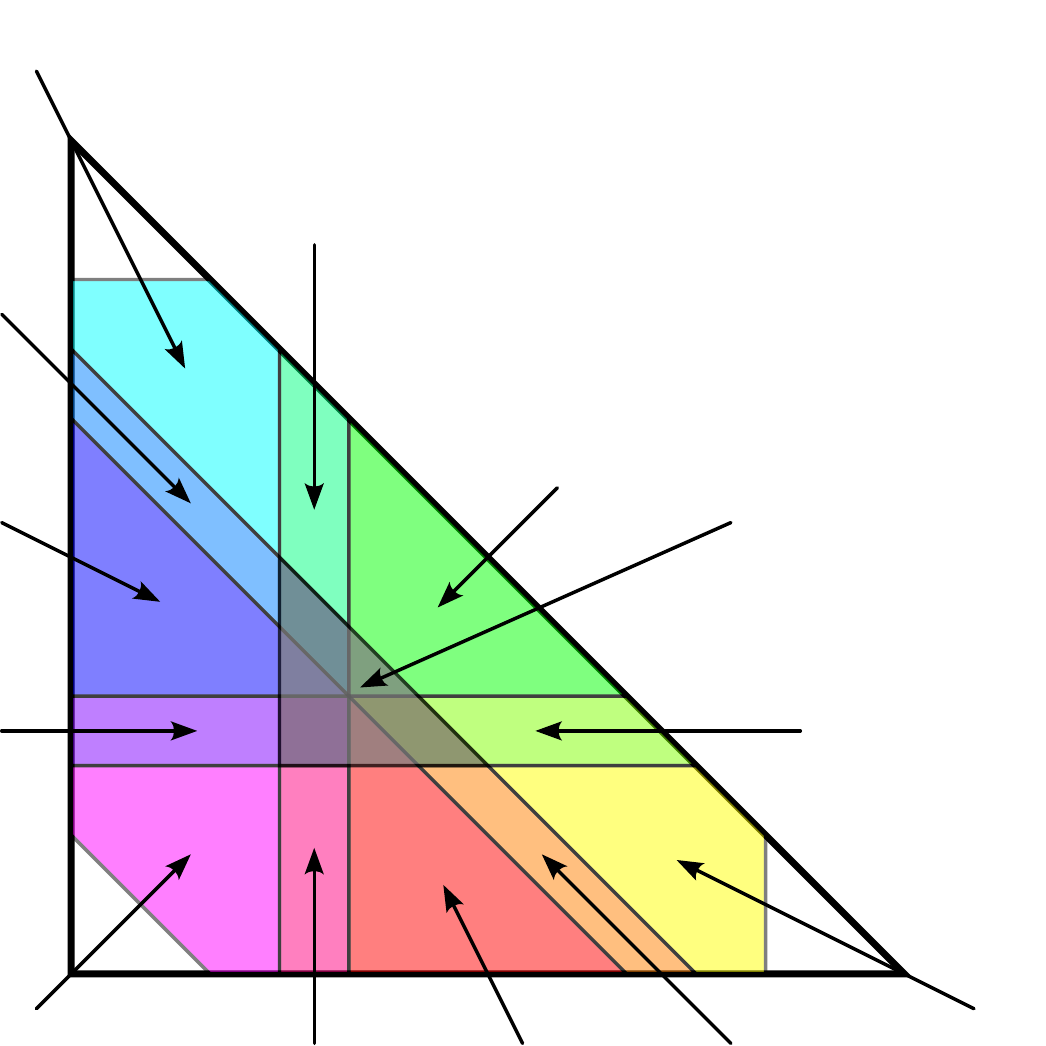}
    \caption{Various labelled parts of the moment image of $\CP{2}$, corresponding to steps in the construction of our Lagrangian triple cap.} 
    \label{F:VariousDeltas}
\end{figure}

Now let $\Delta_\alpha \subset \Delta$ be the right triangle with vertices at $(1,0)$, $(1,1)$ and $(2,0)$, shown in \autoref{F:VariousDeltas} as the red triangle at the bottom. Note that $H_\alpha \subset \Pi^{-1}(\Delta_\alpha)$ and that flow forwards and backwards along the symplectic vector field $S_{13}$ starting at $H_\alpha$ sweeps out all of $\Pi^{-1}(\Delta_\alpha)$. Let $F(D) \cap \Pi^{-1}(\Delta_\alpha)$ be the surface obtained by flowing all the chosen flat arcs $A_i$ in $H_\alpha$ forwards and backwards along this vector field $S_{13} = \partial_{p_1}$. As discussed above, this is Lagrangian because the arcs are tangent to the kernel of the $1$--form $\imath_{S_{13}} \omega |_{H_\alpha}$. Also note that, because each arc is radial on $\{p_\alpha \in [3/4,1]\}$, this definition of $L(D) \cap \Pi^{-1}(\Delta_\alpha)$ agrees with our previous definition of $L(D) \cap \Pi^{-1}(\Delta_{1/4})$ on their overlap in $\Pi^{-1}(\Delta_{1/4} \cap \Delta_\alpha)$. Appropriately applying the $S_3$ symmetry of $\CP{2}$ (permuting the homogeneous coordinates $[z_1:z_2:z_3]$) tells us how to repeat this process in $\Pi^{-1}(\Delta_\beta)$ and $\Pi^{-1}(\Delta_\gamma)$, where these regions are also shown in \autoref{F:VariousDeltas}.

Next let $R_{\alpha,1}$ be the rectangle with vertices at $(3/4,0)$, $(1,0)$, $(1,1)$ and $(3/4,1)$, shown in \autoref{F:VariousDeltas}, to the left of $\Delta_\alpha$. Here we construct $L(D)$ as a union of rectangles, one for each of the chosen arcs $A_i$ in $H_\alpha$. Recall that each such arc is parameterized by $(p_\alpha(t), \mu_\alpha(t), \lambda_\alpha(t))$ satisfying conditions outlined above. Also recall that we constructed a Legendrian lift of each of these arcs, parameterized by $(p_\alpha(t), \mu_\alpha(t), \tilde{\lambda}_\alpha(t))$. Choose a function $f: [3/4,1] \to [3/4,1]$ satisfying the following properties, with respect to some suitably small $\epsilon>0$:
\begin{itemize}
    \item for $s \in [3/4,3/4+\epsilon]$, $f(s) = s$,
    \item for $s \in [1-\epsilon,1]$, $f(s) = 1$,
    \item and for all $s\in [3/4,1]$, $f'(s) \in [0,1+\epsilon]$.
\end{itemize}
Using these, we construct a Lagrangian rectangle parameterized as follows:
\begin{align*}
    p_1(s,t) &= s, \\
    q_1(s,t) &= f'(s) \tilde{\lambda}_\alpha(t), \\
    p_2(s,t) &= f(s) p_\alpha(t), \\
    q_2(s,t) &= \mu_\alpha(t).
\end{align*}
Note that in order for this to be properly embedded in $\Pi^{-1}(R_{\alpha,1})$, the domain of this parameterization needs to be $\{(s,t) \mid s \in [3/4,1], t \in [-1/f(s),1/f(s)] \}$, which is why we extended our original parameterized arcs to allow $t \in \RR$. The fact that this rectangle is Lagrangian follows from the fact that $\tilde{\lambda}'_\alpha(t) = - p_\alpha(t) \mu'_\alpha(t)$. This is our construction of $L(D) \cap \Pi^{-1}(R_{\alpha,1})$.

We now make several observations about this construction that can be verified by direct computation.
\begin{enumerate}
    \item This construction of $L(D) \cap \Pi^{-1}(R_{\alpha,1})$ agrees with our construction of $L(D) \cap \Pi^{-1}(\Delta_{1/4})$ over the overlap $\Delta_{1/4} \cap R_{\alpha,1}$.
    \item For $s \in [1-\epsilon,\epsilon]$, where $f(s) = 1$ and $f'(s) = 0$, $L(D)$ is the same as the surface we would get by flowing our given flat arcs $A_i$ in $H_\alpha$ backwards along the symplectic vector field $\partial_{p_1}$ and thus joins smoothly with the construction of $L(D) \cap \Pi^{-1}(\Delta_\alpha)$.
    \item For $s \in [3/4,3/4+\epsilon]$, where $f(s) = s$ and $f'(s) = 1$, $L(D)$ is tangent to the Liouville vector field $V_1 = p_1 \partial_{p_1} + p_2 \partial_{p_2}$.
    \item Consider the solid torus $H_{\alpha,1} = \{(p_1,q_1,p_2,q_2) \mid p_1 = 3/4, 0 \leq p_2 \leq 3/4\}$. Use solid torus coordinates $(p_{\alpha,1},\mu_{\alpha,1}, \lambda_{\alpha,1})$ on $H_{\alpha,1}$ defined as follows: $p_{\alpha,1} = (4/3) p_2$, $\mu_{\alpha,1} = q_2$, $\lambda_{\alpha,1} = q_1$. With respect to these coordinates, $L(D) \cap H_{\alpha,1}$ is the collection of arcs parameterized by $p_{\alpha,1}(t) = \tilde{\lambda}_\alpha(t)$, $\mu_{\alpha,1}(t) = \mu_\alpha(t)$, $\lambda_{\alpha,1}(t) = \tilde{\lambda}_\alpha(t)$.
    \item In particular, each component of $L(D) \cap H_{\alpha,1}$ is an exact copy of one of the corresponding Legendrian arcs $\tilde{A}_i$ in the solid torus $H_\alpha$, and is Legendrian with respect to the contact structure induced by the Liouville vector field $V_1$.
\end{enumerate}

Now repeat these constructions over the remaining triangles $\Delta_\beta$ and $\Delta_\gamma$ and ``rectangles'' $R_{\bullet,i}$. This gives our Lagrangian $L(D)$ over all of the colored regions in \autoref{F:VariousDeltas} except the three ``ends'' $E_1$, $E_2$ and $E_3$. Since $L(D)$ is tangent to the appropriate Liouville vector fields over the outer edges of the rectangles, we extend $L(D)$ over each $E_i$ by flowing inward along the vector field $V_i$ and this completes our construction of $L(D)$.

Now we verify that the Legendrian links at the three boundary $S^3$'s are in fact Legendrian isotopic to the expected Legendrians $\Lambda_{\alpha \beta}$, $\Lambda_{\beta \gamma}$ and $\Lambda_{\gamma \alpha}$. This is clear because each of these Legendrian links is obtained by gluing together two of the given Legendrian tangles, and we have seen in \autoref{P:JoiningLegendrianTangles} that these are Legendrian isotopic to the standard Legendrianizations of our three grid diagrams.

To complete the proof, having constructed our Lagrangian surface $L(D)$, we need to show that $L(D)$ is diffeomorphic to the abstract ribbon surface $R(D)$. Referring back to~\autoref{F:VariousDeltas}, note that $L(D)$ is diffeomorphic to that part of $L(D)$ which sits over $\Delta_{1/4} \cup \Delta_\alpha \cup \Delta_\beta \cup \Delta_\gamma$, because the rest of $L(D)$ is just built by extending collars to the boundary of this much of $L(D)$. The part sitting over $\Delta_{1/4}$ is a disjoint union of oriented disks, one for each vertex of the abstract colored trivalent graph $\Gamma(D)$, as in the construction of $R(D)$ from the introduction. The orientations of these disks are inherited from the orientation of $\Delta_{1/4}$. Sitting over $\Delta_\alpha$ we have one band for each red edge of $\Gamma(D)$, over $\Delta_\beta$ one band for each blue edge, and over $\Delta_\gamma$ one band for each green edge. These bands are clearly attached in red-blue-green order as one goes clockwise around the boundary of each of the disks, because the regions $\Delta_\alpha$, $\Delta_\beta$ and $\Delta_\gamma$ are attached to $\Delta_{1/4}$ in that order. 

The last thing to verify is that each band is a locally orientation-{\em reversing} band, relative to the orientations of the disks. This is because each band can be parameterized as $[0,1] \times [-1,1]$, where the core $[0,1] \times \{0\}$ maps under the moment map to the corresponding red, blue or green straight line segment in \autoref{F:MomentMapTrisection}, with both endpoints at the center of the moment map image, and where each $\{t\} \times [-1,1]$ factor maps homeomorphically onto its image, which is a straight line segment transverse to the image of the cores $[0,1] \times \{0\}$. Thus each band connects two stacked disks, one on top of the other, {\em without} a twist, leading to a reversal of orientation.
\end{proof}

\section{Examples and applications} \label{sec:examples}

In this section we present several examples and applications. The examples mostly come in the form of combinatorial triple grid diagrams, although as noted in \autoref{sec:introduction}, a geometric triple grid diagram may be obtained immediately from a combinatorial one. 

\subsection{Orientability and Euler characteristic}

Orientability and the Euler characteristic can be determined easily from a triple grid diagram. The following definition is formulated in the language of combinatorial grid diagrams, but with slight modifications can be reformulated for geometric triple grid diagrams.

\begin{definition}
A triple grid diagram is \textit{orientable} if $X$'s and $O$'s can be placed consistently in all three diagrams. That is, $X$'s and $O$'s can be placed in each diagram such that each vertex is assigned an $X$ or an $O$, and each row, column, and diagonal that contains vertices has exactly one $X$ and one $O$. See \autoref{fig:grid}.
\end{definition}

\begin{figure}[h]
    \centering
    \includegraphics[width=1\linewidth]{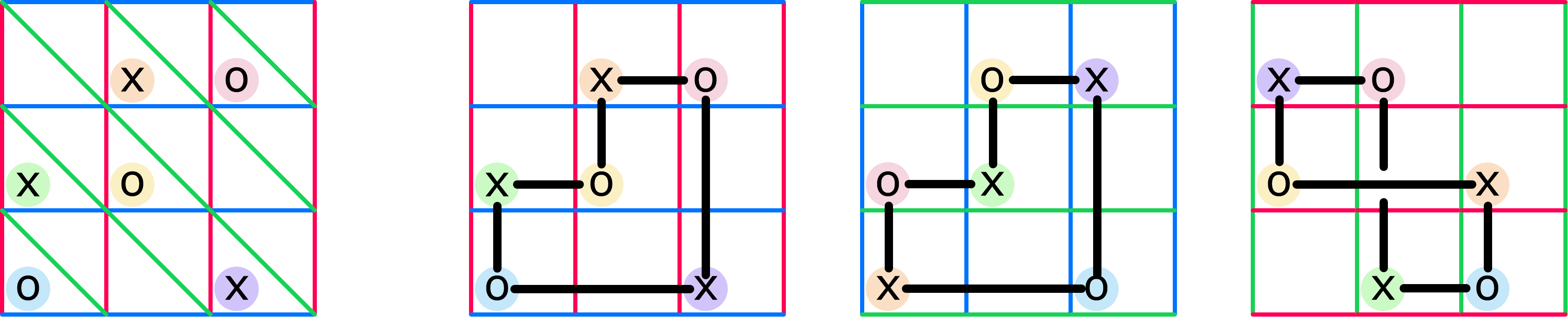}
    \caption{An orientable triple grid diagram. The right-hand side shows separately the three grids making up the combinatorial triple grid diagram on the left-hand side, along with the knot represented by each grid. Colors are used to indicate where each vertex in the triple grid diagram shows up in the three individual grids.
    \label{fig:grid}}
\end{figure}

If a triple grid diagram is orientable then it determines an orientable surface, as the compatible orientations on the three knots induce an orientation on the surface. If $X$'s and $O$'s cannot be placed consistently, then the diagram determines a nonorientable surface. 

Now we compute the Euler characteristic from a (combinatorial) triple grid diagram. Here we assume the surface is connected, but this calculation can be easily modified for multiple components. We will also carry out our computation for a closed surface, which we can think of as the surface obtained by abstractly filling each boundary component of the triple cap with a disk. Let $b$ be half the number of vertices (points) of a triple grid diagram. If there are no empty rows, columns, or diagonals, then $b$ is the same as the grid number. The Euler characteristic of the (connected) surface created from the triple grid diagram is \[\chi=V-E+F=2b-3b+F=F-b,\] where $V$ is the number of vertices, $E$ is the number of edges, and $F$ is the number of faces (which is the same as the number of link components). In particular this means that for orientable surfaces $\#^gT^2$ we have \[2-2g=F-b,\] and for nonorientable surfacs $\#^k\mathbb{RP}^2$ we have \[2-k=F-b.\]
If we do not fill in all the boundary components with disks, the above two formulae are still correct but the Euler characteristic formula needs to be adjusted appropriately.

Note that following the work in \cite{MZ1,MZ2,HKM}, if each of the three links in a triple grid diagram is an unlink, then we can (uniquely) construct a smoothly embedded surface by filling in these three unlinks with disks. In \cite{Blackwell}, such triple grid diagrams were called \textit{simple}. In general, these surfaces will not be Lagrangian; the additional $tb=-1$ condition that the diagram must satisfy in order to produce a closed Lagrangian surface is fairly restrictive. 

There exist simple triple grid diagrams representing (smoothly) embedded $\#^g \mathbb{T}^2$ and $\#^k \RP{2}$ for all integers $g,k>0$; see \autoref{ex:staircase}, \autoref{ex:klein_bottle}, and \autoref{ex:families}. The only embedded, orientable Lagrangian surface in $\CP{2}$ is a torus, as Lagrangians have isomorphic normal and tangent bundles. As for the nonorientable case, Shevchishin \cite{Shevchishin} and Nemirovski \cite{Nemirovski} showed that there is no Lagrangian embedding of the Klein bottle in $\CP{2}$. Combining work of Givental \cite{Givental}, Audin \cite{Audin}, and Dai, Ho, and Li \cite{DHL}, it follows that $\#^k \RP{2}$ admits a Lagrangian embedding in $\CP{2}$ if and only if $k \equiv 2 \pmod 4$ for $k \neq 2$, or $k \equiv 1 \pmod 4$. We can realize a Lagrangian $\RP{2}$ and Lagrangian torus with small diagrams; see \autoref{ex:2x2} and \autoref{ex:3x3}. However it remains to be seen whether all Lagrangian $\#^k \RP{2}$'s can be realized.

\begin{question}
Can all $\#^k \RP{2}$ for which there exists a Lagrangian embedding in $\CP{2}$ be realized as triple grid diagrams where each link is a unlink with $tb=-1$ components?
\end{question}

Even though every (smoothly) embedded $\#^k \RP{2}$ can be represented by a simple triple grid diagram, higher $k$ values require a higher number of vertices, and it is in general very difficult to construct diagrams with many vertices without introducing unwanted cusps, which lower $tb$ and hence makes it harder to achieve $tb=-1$.

\subsection{Examples of triple grid diagrams} \label{subsec:examples}

We now present numerous examples of (combinatorial) triple grid diagrams with various interesting properties. Each figure shows a (combinatorial) triple grid diagram, and to the right shows the three links represented by the three grids, along with the corresponding Legendrian fronts.

\begin{example}[Grid number $2$] \label{ex:2x2} There is one unique triple grid diagram (\autoref{fig:2x2}) with grid number $2$ and it represents an $\mathbb{RP}^2$. Each knot is an unknot, and each unknot, when viewed as the front projection of a Legendrian knot, has $tb=-1$. So the $\mathbb{RP}^2$ represented by this grid is Lagrangian. 
\begin{figure}[H]
    \centering
    \includegraphics[width=4 cm]{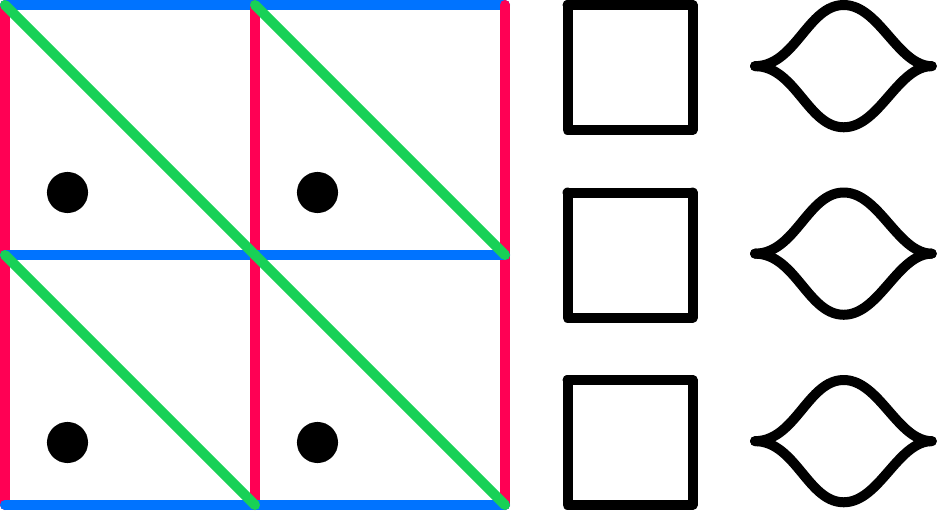}
    \caption{The unique triple grid diagram with grid number $2$.
    \label{fig:2x2}}
\end{figure}
\end{example}

\begin{example}[Grid number $3$] \label{ex:3x3} There is one unique triple grid diagram (\autoref{fig:3x3}) with grid number $3$ and it represents a $T^2$. Each knot is an unknot, and each unknot, when viewed as the front projection of a Legendrian knot, has $tb=-1$. So the $T^2$ represented by this grid is Lagrangian.
\begin{figure}[H]
    \centering
    \includegraphics[width=6 cm]{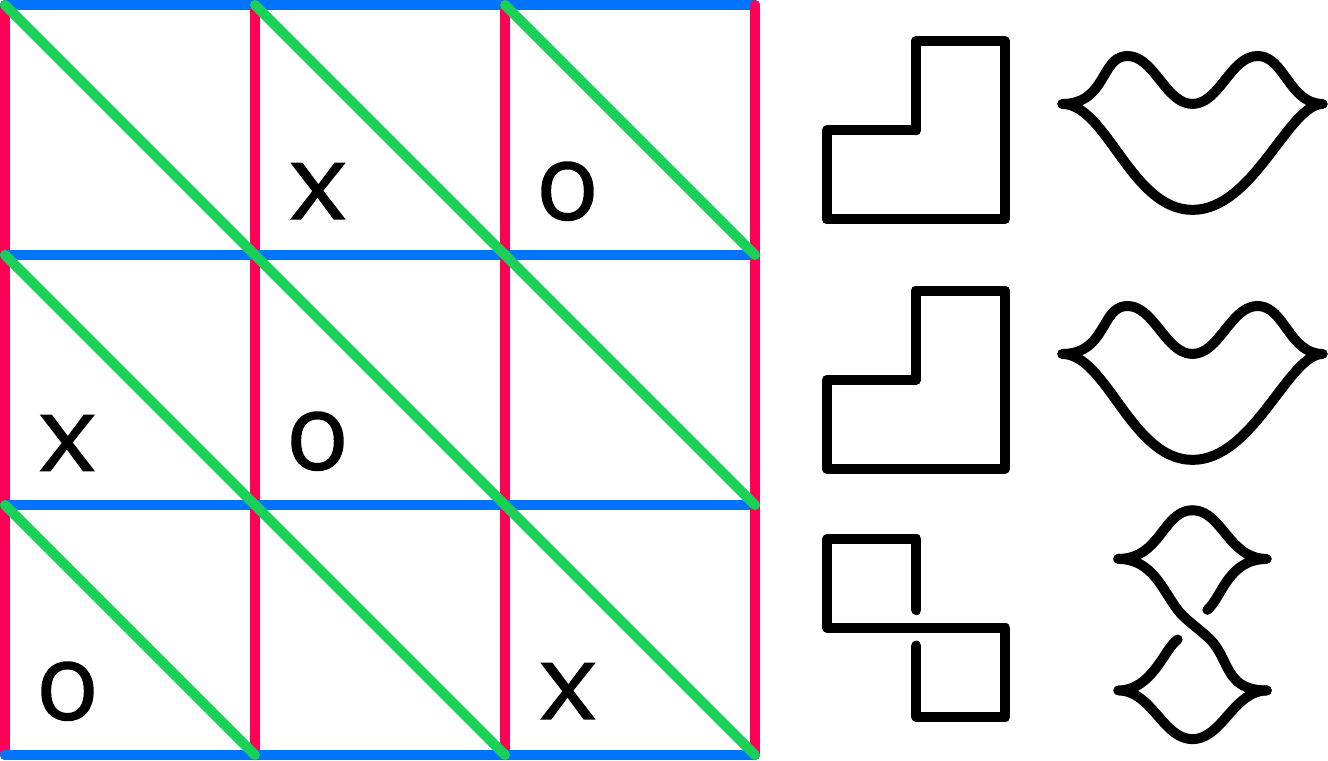}
    \caption{The unique triple grid diagram with grid number $3$.
    \label{fig:3x3}}
\end{figure}
\end{example}

\begin{example}[Another Lagrangian torus] \label{ex:torus} The following diagram (\autoref{fig:torus}) contains a two-component unlink in one grid and unknots in the others. When we cap off with disks, we obtain a $T^2$. Each of the four unknots have $tb=-1$, so this $T^2$ is Lagrangian. 
\begin{figure}[H]
    \centering
    \includegraphics[width=10 cm]{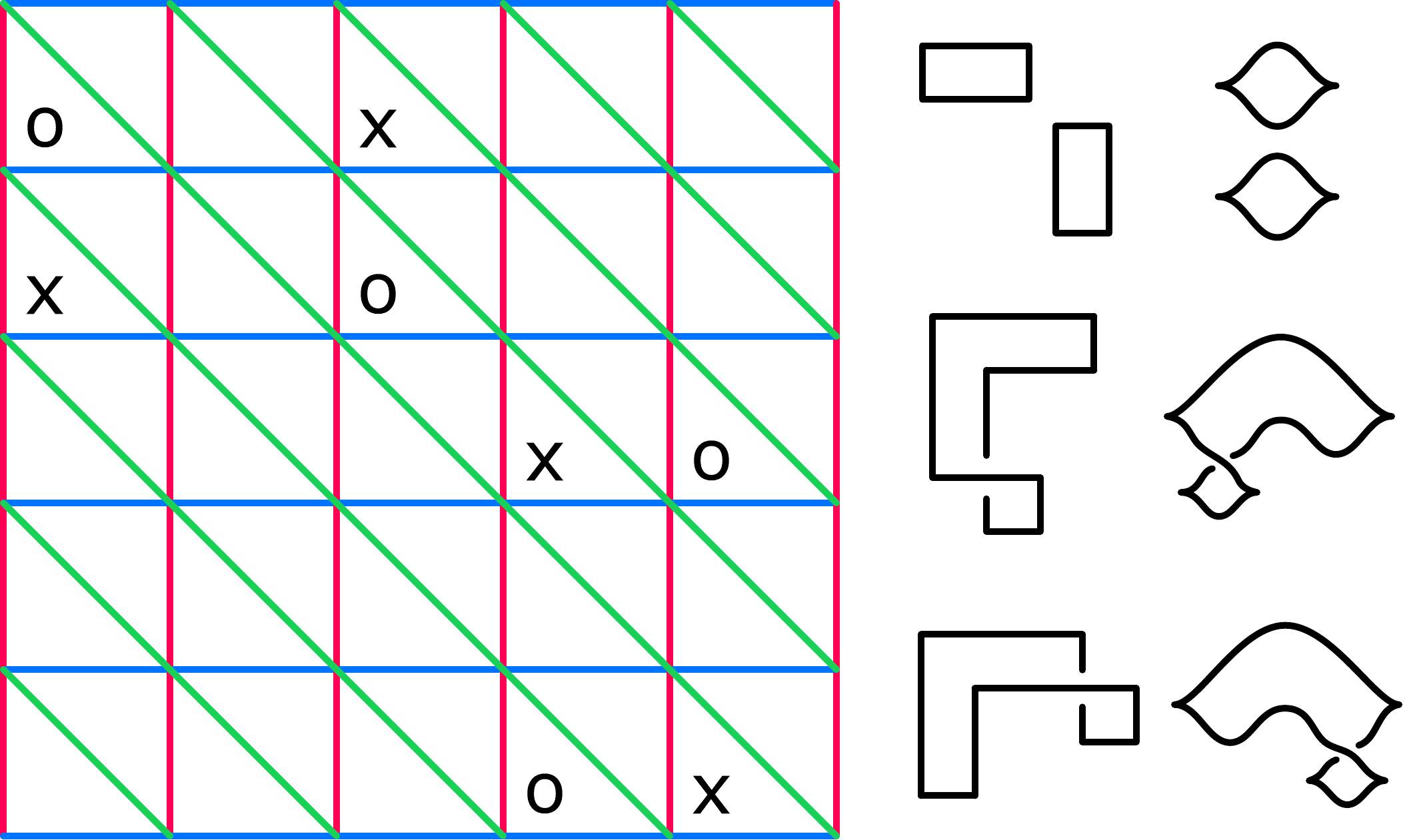}
    \caption{A triple grid diagram representing another Lagrangian torus.
    \label{fig:torus}}
\end{figure}
\end{example}

\begin{example}[Staircase family] \label{ex:staircase} Let $n$ be the grid number. A ``staircase'' triple grid diagram (\autoref{fig:staircase}), which is a diagram that looks like a staircase in one grid as shown below, produces $\mathbb{RP}^2$ for even $n$ and $\#^{\frac{n-1}{2}}T^2$ for odd $n$. The unknots making up the $\mathbb{RP}^2$ will always have $tb=-1$, but for odd grid numbers, only $n=3$ will produce $tb=-1$ unknots. That is, the even grids will always give closed Lagrangian surfaces, but the odd grids only give a closed Lagrangian surface for grid number $3$. This is good because $n=3$ yields a $T^2$ (as seen in \autoref{ex:3x3}), but higher $n$ will yield a higher genus surface, and there are no other embedded, orientable Lagrangians in $\mathbb{CP}^2$ besides $T^2$.
\begin{figure}[H]
    \centering
    \includegraphics[width=1\linewidth]{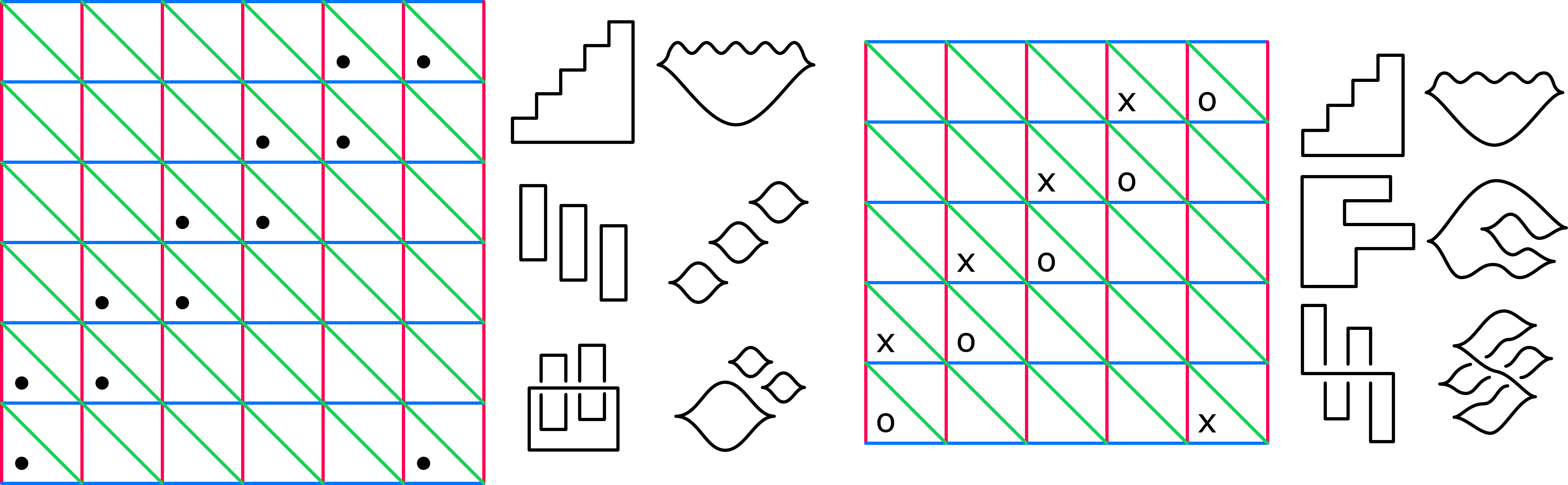}
    \caption{A family of ``staircase'' triple grid diagrams.
    \label{fig:staircase}}
\end{figure}
\end{example}

\begin{example}[Klein bottle] \label{ex:klein_bottle}
The following diagram (\autoref{fig:klein_bottle}) has an unknot in each grid, some of which do not have $tb=-1$. When we fill in with disks, we obtain a $\#^2 \mathbb{RP}^2$ which is not Lagrangian.
\begin{figure}[H]
    \centering
    \includegraphics[width=8 cm]{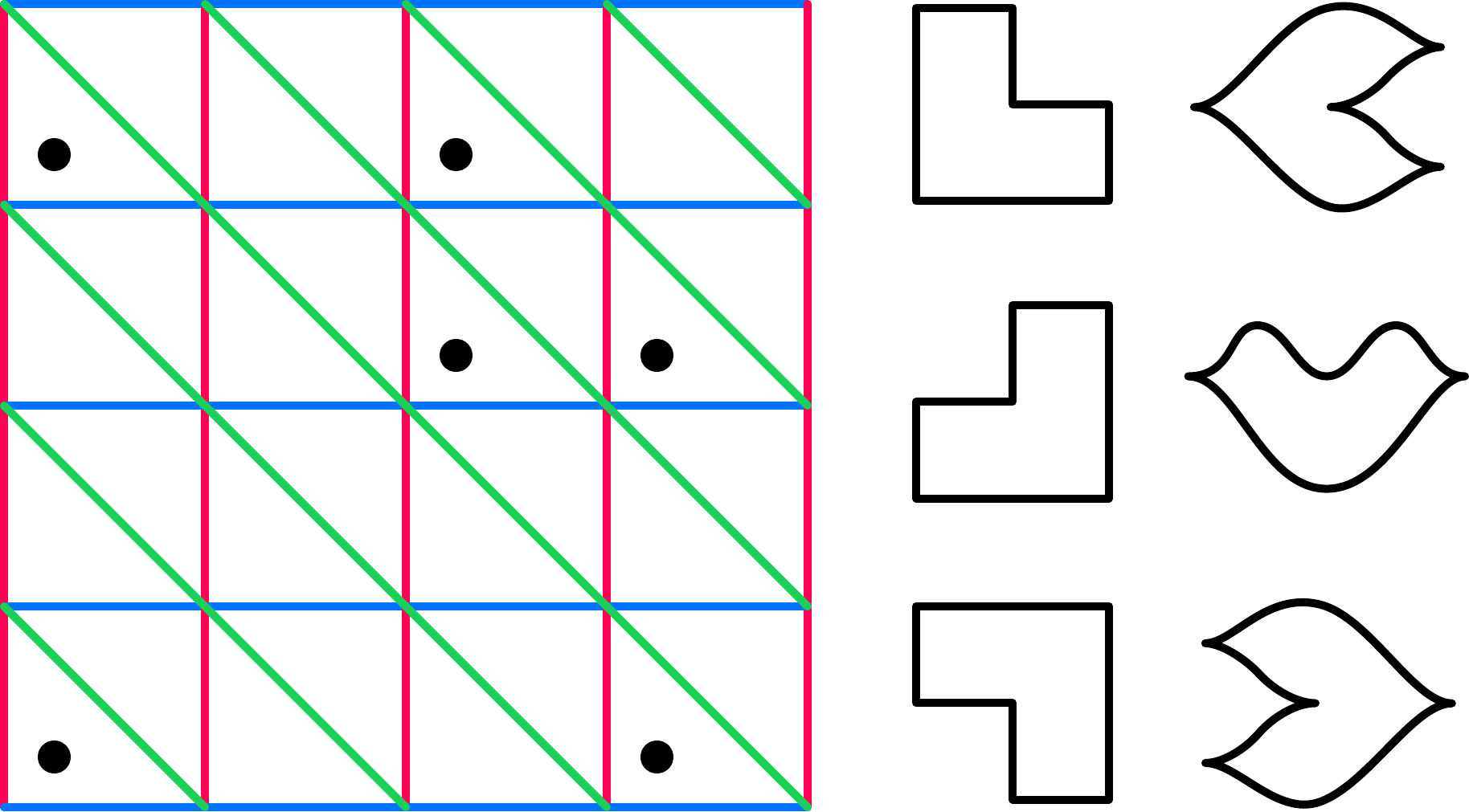}
    \caption{A triple grid diagram representing a Klein bottle.
    \label{fig:klein_bottle}}
\end{figure}
\end{example}

\begin{example}[Nonorientable families] \label{ex:families}
The following diagrams (\autoref{fig:families}) are representative of two families of diagrams which (together) produce $\#^k \mathbb{RP}^2$ for all positive integers $k$ except $2$ (see \autoref{ex:klein_bottle}). The left diagram, made up of $k$ disjoint squares along the anti-diagonal, where $k$ is odd, represents $\#^k \mathbb{RP}^2$. The right diagram, made up of $k$ disjoint squares and $k$ disjoint ``hexagons'' along the anti-diagonal, where $k$ is even, represents $\#^{k+2} \mathbb{RP}^2$. Except in the case of one square, which produces $\mathbb{RP}^2$ (see \autoref{ex:2x2}), these diagrams do not produce closed Lagrangian surfaces.
\begin{figure}[H]
    \centering
    \includegraphics[width=1\linewidth]{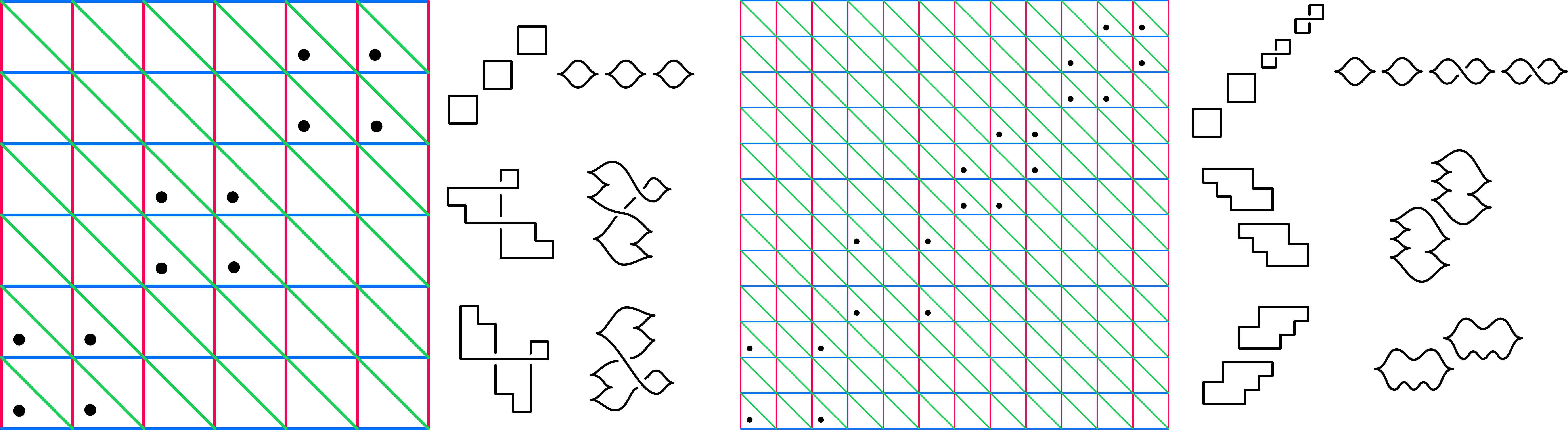}
    \caption{Two families of diagrams representing $\#^k \mathbb{RP}^2$ for all positive integers $k$ except $2$.
    \label{fig:families}}
\end{figure}
\end{example}

\begin{example}[Sphere with double point] \label{ex:sphere} The following diagram (\autoref{fig:sphere}) has a Hopf link in one grid and two-component unlinks in the others. If we cap off the Hopf link with two embedded disks with one intersection point, and the unlinks with embedded disks with no intersection points, we obtain a sphere with one double point which is Lagrangian by \autoref{C:ImmersedCase}.
\begin{figure}[H]
    \centering
    \includegraphics[width=10 cm]{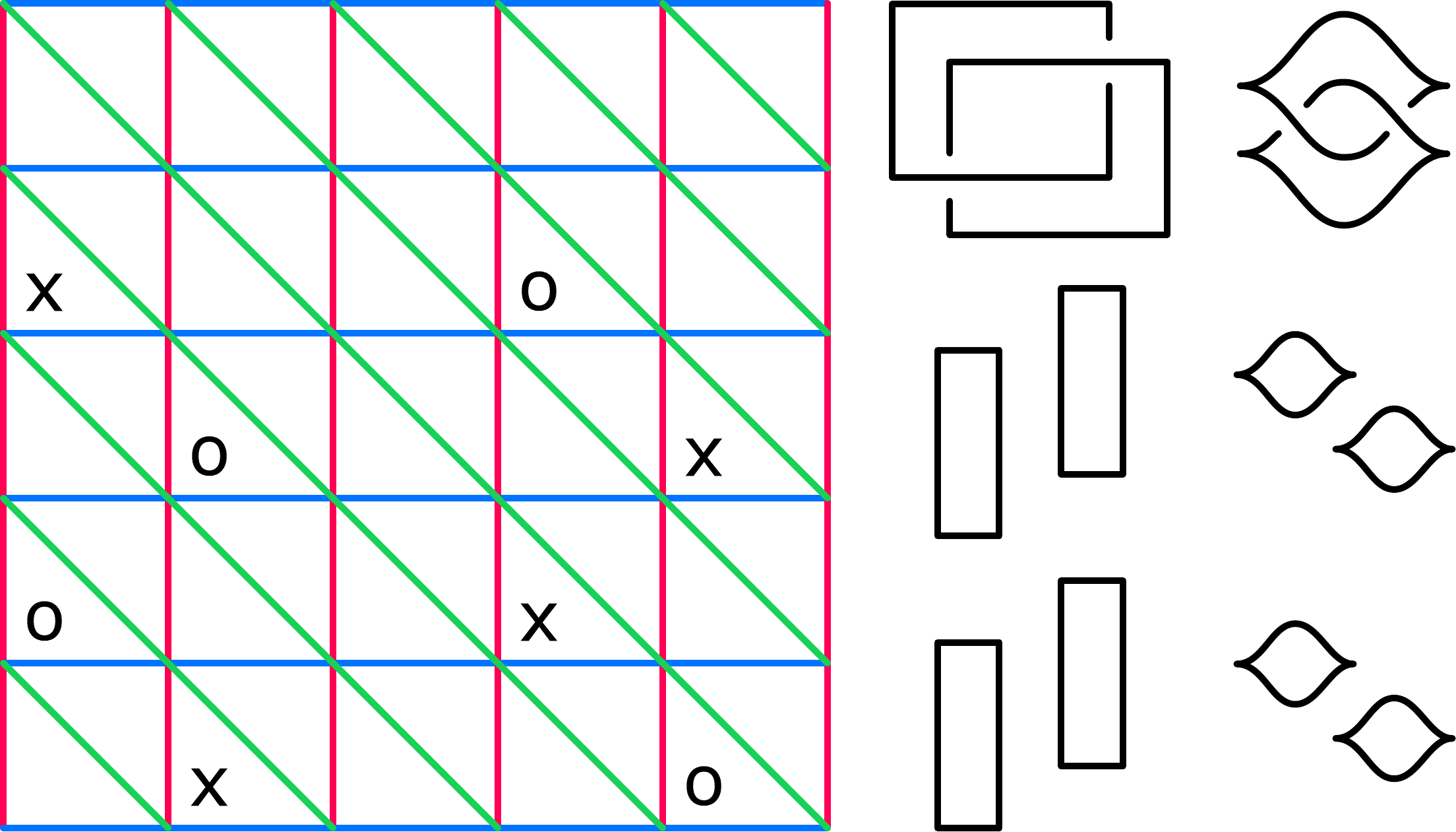}
    \caption{A triple grid diagram representing a sphere with a double point.
    \label{fig:sphere}}
\end{figure}
\end{example}

\begin{example}[Two intersecting components] \label{ex:components} The following diagram (\autoref{fig:components}) has a Hopf link in each grid. If we cap off each with two embedded disks with one intersection point, we obtain an immersed surface comprised of two embedded components which is Lagrangian by \autoref{C:ImmersedCase}. The three Hopf links are made of $tb=-1$ unknots, so the two components are both Lagrangian $\RP{2}$'s, and they intersect in three points.
\begin{figure}[H]
    \centering
    \includegraphics[width=10 cm]{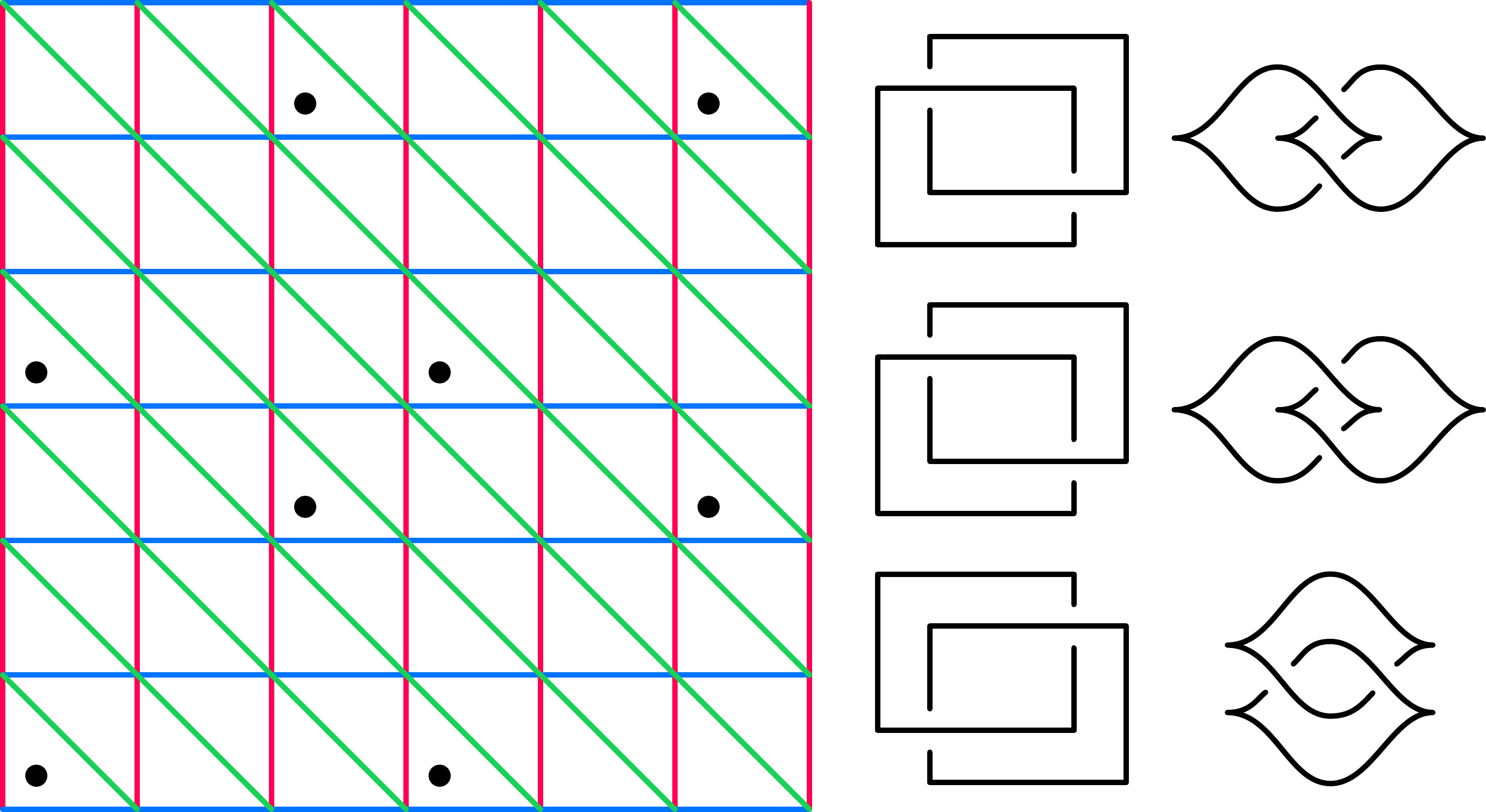}
    \caption{A triple grid diagram representing a surface with two intersecting components.
    \label{fig:components}}
\end{figure}
\end{example}

A general method for efficiently constructing {\it all} examples of triple grid diagrams has recently been introduced by Gulati and the third author \cite{GulatiLambertCole}.

\subsection{Cap for a Legendrian and its push-off}

A triple grid diagram for a cap of a Legendrian and its push-off can be constructed the following way. Start with a grid diagram for a Legendrian link, and then displace a copy of it shifted northwest by one square. Connect the vertices of the original and displaced link by diagonal lines (of slope $-1$). The result, after adding in crossings using our pairwise over/under conventions, is a triple grid diagram consisting of two copies of the original link along with a Legendrian unlink with $tb=-1$ components. See the diagram on the left in \autoref{fig:LegendrianPlusPushoff}. Filling in the $tb=-1$ unlink components with Lagrangian disks then creates a Lagrangian cap of the Legendrian and its push-off, which is topologically an annulus. 

\begin{figure}[H]
    \centering
    \includegraphics[width=10 cm]{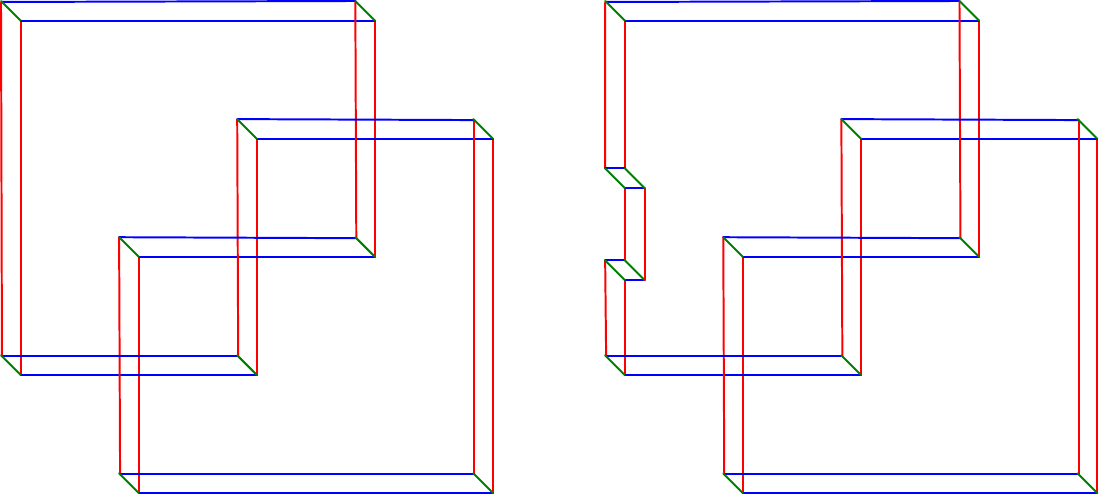}
    \caption{Turning a grid diagram for a trefoil into two different triple grid diagrams. On the left is a triple grid diagram that can be filled on the blue-green and green-red ends but which, on the red-blue end, is a copy of the original Legendrian and a Legendrian pushoff. On the right is a closely related fillable triple grid diagram. Note that here we have dropped the grid lines, so this can be seen as a geometric triple grid diagram and it is clear that there is a moduli space of such diagrams. In particular, as drawn there are sometimes multiple green edges in a given diagonal line, but these can be displaced by small perturbations. See~\cite{GulatiLambertCole} for further discussion of moduli spaces of triple grid diagrams.
    \label{fig:LegendrianPlusPushoff}}
\end{figure}

From this construction we can reprove the following result:
\begin{proposition}[Chantraine \cite{Chantraine}]
    If $\Lambda$ is a fillable Legendrian knot in $S^3$ with an orientable genus $g$ filling and Thurston-Bennequin number $\tb(\Lambda)$, then $\tb(\Lambda) = 2g - 1$.
\end{proposition}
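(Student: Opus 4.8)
The plan is to turn a hypothetical genus-$g$ Lagrangian filling of $\Lambda$ into a closed \emph{immersed} Lagrangian surface $\overline{L}$ in $\CP{2}$ by gluing two copies of the filling onto the annular triple cap constructed above, and then to read off $\tb(\Lambda)$ from the adjunction formula: a closed Lagrangian in $\CP{2}$ is null-homologous, so its self-intersection number vanishes, and the immersed Lagrangian adjunction formula forces its signed double-point count to equal half its Euler characteristic --- which computes $\tb(\Lambda)$. Concretely, starting from any grid diagram for $\Lambda$, form the ``Legendrian-plus-push-off'' triple grid diagram $D$ described above, so that $\Lambda_{\beta\gamma}(D)$ and $\Lambda_{\gamma\alpha}(D)$ are Legendrian unlinks of $\tb=-1$ unknots while $\Lambda_{\alpha\beta}(D)=\Lambda\sqcup\Lambda'$, where $\Lambda'$ is the Legendrian (contact-framing) push-off of $\Lambda$ and hence Legendrian isotopic to $\Lambda$. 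Then \autoref{T:LagrangianCap} produces the Lagrangian triple cap $L(D)\subset X=\CP{2}\setminus\interior(B_1\cup B_2\cup B_3)$, and \autoref{prop:unlink-filling} lets us cap the two $\tb=-1$ unlinks by disjoint Lagrangian disks inside $B_2$ and $B_3$. The union $A$ of $L(D)$ with these disks is a Lagrangian cap of $\Lambda\sqcup\Lambda'$ in the symplectic cap $(\CP{2}\setminus\interior B_1,\omega)$ of $(S^3,\xi_{std})$, and, as observed above, $A$ is topologically an annulus with $\partial A=\Lambda\sqcup\Lambda'$.

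Next, suppose $\Lambda$ has a connected orientable Lagrangian filling $L_f$ of genus $g$, so $L_f$ has a single boundary circle and $\chi(L_f)=1-2g$, inside $B_1\cong B^4$ with its standard Liouville structure. Since $\Lambda'$ is Legendrian isotopic to $\Lambda$, there is a corresponding filling $L_f'$ of $\Lambda'$; after a generic compactly supported Hamiltonian perturbation in $\interior B_1$ (which leaves $A$, lying outside $B_1$, untouched) we may assume $L_f$ and $L_f'$ meet transversely in finitely many points and are otherwise disjoint. Gluing $L_f$ and $L_f'$ onto the two ends of the annulus $A$ --- via the obvious two-component analogue of \autoref{prop:gluing}, applied to the disjoint cylindrical ends over $\Lambda$ and $\Lambda'$ --- yields a closed, connected, orientable, immersed Lagrangian surface $\overline{L}\subset\CP{2}$ whose only double points are the points of $L_f\cap L_f'$. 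Gluing the annulus to the two fillings along circles gives $\chi(\overline{L})=\chi(L_f)+\chi(A)+\chi(L_f')=2-4g$.

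To finish, I would combine two facts. Because $\overline{L}$ is a closed Lagrangian in $(\CP{2},\omega)$, $\int_{\overline{L}}\omega=0$, hence $[\overline{L}]=0$ in $H_2(\CP{2};\ZZ)$ (as $[\omega]$ spans $H^2(\CP{2};\QQ)$ and $H_2(\CP{2};\ZZ)\cong\ZZ$), so $[\overline{L}]\cdot[\overline{L}]=0$. On the other hand, the normal bundle of a Lagrangian immersion is its cotangent bundle, so $\langle e(\nu\overline{L}),[\overline{L}]\rangle=-\chi(\overline{L})$, and the immersed Lagrangian adjunction formula gives $[\overline{L}]\cdot[\overline{L}]=-\chi(\overline{L})+2d$, where $d$ is the algebraic count of double points; thus $d=\tfrac12\chi(\overline{L})=1-2g$. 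But every double point of $\overline{L}$ arises from $L_f\cap L_f'$, so $d$ equals the intersection number $[L_f]\cdot[L_f']$ in $B^4$, i.e.\ the linking number of $\partial L_f$ and $\partial L_f'$ with the orientations making $\overline{L}$ oriented; gluing $L_f$ and $L_f'$ onto the two \emph{oriented} boundary circles of the oriented annulus $A$ forces these circles to carry opposite orientations relative to the parallel contact-framing identification, so this linking number is $-\tb(\Lambda)$. Hence $-\tb(\Lambda)=1-2g$, that is, $\tb(\Lambda)=2g-1$.

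The routine-but-nontrivial parts are two. First, one must state and verify the two-component version of \autoref{prop:gluing} --- gluing separate Lagrangian fillings of the two components of a two-component Legendrian in one $(S^3,\xi_{std})$ onto a Lagrangian cap of the whole link, and checking that the output is an honest immersed Lagrangian with transverse double points exactly along $L_f\cap L_f'$; this is the collar-matching argument of \autoref{prop:gluing} carried out over each component. Second, and this is where I expect the real subtlety, is the sign in the last step, which is precisely where the value $2g-1$ is selected over $1-2g$: one must confirm both that the push-off appearing in the triple grid diagram realizes the Thurston--Bennequin (contact) framing rather than the Seifert framing, and that the orientation reversal in the gluing supplies the minus sign. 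The $g=0$ case --- where the formula gives $\tb(\Lambda)=-1$ and $\overline{L}$ is a nullhomologous immersed sphere with one double point (compare \autoref{ex:sphere}) --- is a convenient consistency check on both.
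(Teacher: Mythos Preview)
Your proof is correct and takes essentially the same approach as the paper: build the annular Lagrangian cap from the Legendrian-plus-push-off triple grid diagram, glue two copies of the genus-$g$ filling to obtain a closed immersed null-homologous Lagrangian in $\CP{2}$, and read off $\tb(\Lambda)=2g-1$ from the double-point formula together with the identification of the normal bundle of a Lagrangian with its (co)tangent bundle. You are more explicit than the paper about the sign bookkeeping (and rightly flag it as the delicate step), but the strategy is identical.
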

\begin{proof}
    As in the paragraph above, construct a triple grid diagram $D$ from a grid diagram for $\Lambda$ and use this to construct an annular Lagrangrian cap for $\Lambda \cup \Lambda'$, where $\Lambda'$ is a Legendrian pushoff of $\Lambda$. Filling $\Lambda$ and $\Lambda'$ by genus $g$ Lagrangian fillings gives a closed {\em immersed} Lagrangian surface $\Sigma$ of genus $2g$ with the algebraic count of double points equal to $\tb(\Lambda)$. Since the algebraic intersection number $[\Sigma] \cdot [\Sigma]$ is $0$, the Euler number of the normal bundle $\nu \Sigma$ is equal to $-2 \tb(\Lambda)$. (See, for example, Lemma 1 in \cite{Bohr} for a statement of the standard result needed to see this.) Since $\Sigma$ is Lagrangian, its normal bundle is isomorphic to its tangent bundle, so $-2 \tb(\Lambda) = 2 - 4g$, and thus $\tb(\Lambda) = 2g-1$.
\end{proof}

The right hand diagram in \autoref{fig:LegendrianPlusPushoff} illustrates a small change that can be made in this construction to produce a fillable triple grid diagram which always gives an embedded Lagrangian torus in $\CP{2}$. Alternatively, for any Legendrian knot $\Lambda$, the link consisting of $\Lambda$ and a contact-framed push-off has an annular Lagrangian filling. Therefore, the cap and this filling can be glued to produce a closed Lagrangian torus.  

\subsection{Fillability obstructions}
The obstructions to Lagrangian embeddings of nonorientable surfaces mentioned above can in principle obstruct Lagrangian fillings of Legendrian links.  As stated, the orientability of the Lagrangian cap is determined by whether the cubic graph determined by the triple grid diagram is bipartite.  In addition, the Euler characteristic of a closed surface $L$ obtained from a triple grid diagram is determined by the formula
\[ \chi(L) = c_1 + c_2 + c_3 - b\]
where $c_1,c_2,c_3$ are the number of components of the Legendrian links $\Lambda_{\alpha \beta} (D), \Lambda_{\beta \gamma}(D)$, and $\Lambda_{\gamma \alpha}(D)$, respectively, and $2b$ is the number of points in the grid diagram.  By the classification of surfaces, this is all the information needed to determine the homeomorphism-type of the constructed surface.

\begin{theorem}
Let $D$ be a nonorientable triple grid diagram such that
\begin{enumerate}
    \item the quantity $c_1 + c_2 + c_3 - b$ is equal to 0 or is strictly negative and equal to $2$ or $3 \pmod 4$, and
    \item $\Lambda_{\alpha \beta}(D)$ and $\Lambda_{\beta \gamma}(D)$ admit Lagrangian fillings by slice disks.
\end{enumerate}
Then $\Lambda_{\gamma \alpha}(D)$ does not admit a Lagrangian filling by slice disks.
\end{theorem}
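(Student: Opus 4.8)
The plan is to argue by contradiction, building a closed embedded Lagrangian surface in $\CP{2}$ whose homeomorphism type is forbidden by the known obstructions to Lagrangian embeddings of nonorientable surfaces. Suppose, contrary to the claim, that $\Lambda_{\gamma\alpha}(D)$ admits a Lagrangian filling by disjoint Lagrangian slice disks. Combined with hypothesis (2), all three links $\Lambda_{\alpha\beta}(D)$, $\Lambda_{\beta\gamma}(D)$, $\Lambda_{\gamma\alpha}(D)$ then bound fillings by disjoint Lagrangian disks in the three Liouville balls $B_1$, $B_2$, $B_3$ of the introduction.

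First I would feed this into the machinery of the previous sections. By \autoref{T:LagrangianCap}, $D$ produces an embedded Lagrangian triple cap $L(D)\subset X = \CP{2}\setminus\interior(B_1\cup B_2\cup B_3)$, diffeomorphic to $R(D)$ and meeting $\partial_i X$ in the three Legendrians. I would then glue the three disk fillings onto $L(D)$, applying the gluing procedure of \autoref{prop:gluing} once at each boundary sphere $\partial_i X$ --- precisely the situation of the remark following \autoref{C:ClosedLagrangian} --- to obtain a closed Lagrangian surface $\overline{L}(D)\subset\CP{2}$. Two bookkeeping points must be checked here: (i) $\overline{L}(D)$ is \emph{embedded}, since $L(D)$ is embedded, each disk filling is embedded inside one ball $B_i$, and the $B_i$ are pairwise disjoint and meet $L(D)$ only along the Legendrians; and (ii) because \emph{all three} links are capped by disks, $\overline{L}(D)$ is homeomorphic to the surface obtained from $R(D)$ by attaching a disk to each boundary circle. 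It is essential for (ii) that the fillings are by \emph{disks}; this is why the hypothesis demands slice disks rather than arbitrary Lagrangian fillings.

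Next I would convert this to a numerical contradiction. Since $D$ is a nonorientable triple grid diagram, $R(D)$ and hence $\overline{L}(D)$ is closed and nonorientable, so $\overline{L}(D)\cong\#^{k}\RP{2}$ for some $k\ge 1$ (as elsewhere in this section we take the diagram to give a connected surface; otherwise one runs the argument on a nonorientable component). The Euler characteristic formula recalled above gives $\chi(\overline{L}(D)) = c_1+c_2+c_3-b$, hence $k = 2-(c_1+c_2+c_3-b)$. Hypothesis (1) now pins $k$ to a forbidden value: if $c_1+c_2+c_3-b=0$ then $k=2$, so $\overline{L}(D)$ is a Lagrangian Klein bottle in $\CP{2}$, contradicting Shevchishin \cite{Shevchishin} and Nemirovski \cite{Nemirovski}; and if $c_1+c_2+c_3-b$ is strictly negative and congruent to $2$ (respectively $3$) mod $4$, then $k\ge 3$ and $k\equiv 0$ (respectively $3$) $\pmod 4$, which is incompatible with a Lagrangian embedding of $\#^k\RP{2}$ in $\CP{2}$ by the combined results of Givental \cite{Givental}, Audin \cite{Audin}, and Dai--Ho--Li \cite{DHL} recalled above (those require $k\equiv 1\pmod 4$, or $k\equiv 2\pmod 4$ with $k\neq 2$). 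In each case the surface $\overline{L}(D)$ cannot exist, so $\Lambda_{\gamma\alpha}(D)$ admits no Lagrangian filling by slice disks.

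The content of the argument lies entirely in correctly assembling the earlier results, so the only real obstacle is careful bookkeeping: verifying that the glued surface is genuinely embedded and closed, that its Euler characteristic is exactly $c_1+c_2+c_3-b$ (which in turn relies on the fillings being disks), and that the arithmetic of hypothesis (1) lands $k$ in a range the classification genuinely forbids --- in particular that the strictly negative cases give $k\ge 3$, so that the $k=2$ Klein bottle exception in the embedding classification does not interfere there.
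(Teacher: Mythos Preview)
Your proof is correct and follows essentially the same contradiction argument as the paper: assume all three Legendrians have Lagrangian disk fillings, glue to the triple cap to obtain a closed nonorientable Lagrangian in $\CP{2}$, compute $k$ from $\chi = c_1+c_2+c_3-b$, and invoke the known embedding obstructions. If anything, your version is slightly more careful than the paper's, since you invoke the gluing machinery and the remark after \autoref{C:ClosedLagrangian} directly rather than citing \autoref{C:ClosedLagrangian} itself (whose hypotheses are the narrower $tb=-1$ unlink condition).
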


\begin{proof}
Suppose, by contradiction, that $\Lambda_{\gamma \alpha}(D)$ does admit a Lagrangian filling by slice disks.  Then by \autoref{C:ClosedLagrangian}, we obtain a closed, embedded Lagrangian surface homeomorphic to either the Klein bottle or $\#_k \RP{2}$ for $k = 0$ or $k = 3 \pmod 4$, which violates the embedding results of Givental \cite{Givental}, Audin \cite{Audin}, and Dai-Ho-Li \cite{DHL}.
\end{proof}

This result is just the tip of the iceberg for fillability obstructions, as a triple cap for three Legendrians impose various constraints on their simultaneous fillability. We leave this as an exercise to the clever reader.

\printbibliography[title={Bibliography}]

\end{document}